\renewcommand{\cite}{\citep*}
\numberwithin{equation}{section}
\theoremstyle{plain}
\newtheorem{theorem}{Theorem}[section]
\newtheorem{lemma}[theorem]{Lemma}
\newtheorem{conjecture}[theorem]{Conjecture}
\theoremstyle{definition}
\newtheorem{remark}[theorem]{Remark}
\renewcommand{\phi}{\varphi}
\newcommand{\eps}{\epsilon}
\newcommand{\eq}{\eqref}
\newcommand{\bigo}{\mathrm{O}}
\newcommand{\lito}{\mathrm{o}}
\newcommand{\Beta}{\mathop{\mathrm{Beta}}}
\newcommand{\Exp}{\mathop{\mathrm{Exp}}}
\newcommand{\TI}{\mathop{\mathrm{TI}}}
\def\E{\mathbbm{E}}
\newcommand{\Var}{\mathop{\mathrm{Var}}\nolimits}
\newcommand{\Cov}{\mathop{\mathrm{Cov}}}
\newcommand{\e}{{\mathrm{e}}}
\newcounter{ctr}\loop\stepcounter{ctr}\edef\X{\@Alph\c@ctr}%
\edef\csname s\X\endcsname{\noexpand\mathscr{\X}}
\edef\csname c\X\endcsname{\noexpand\mathcal{\X}}
\edef\csname b\X\endcsname{\noexpand\boldsymbol{\X}}
\edef\csname I\X\endcsname{\noexpand\mathbbm{\X}}
\edef\csname r\X\endcsname{\noexpand\mathrm{\X}}
\def\ba#1{\begin{align*}#1\end{align*}}
\def\ban#1{\begin{align}#1\end{align}}
\def\given{\typeout{Command 'given' should only be used within bracket command}}
\newcounter{@bracketlevel}
\def\@bracketfactory#1#2#3#4#5#6{
\expandafter\def\csname#1\endcsname##1{%
\addtocounter{@bracketlevel}{1}%
\global\expandafter\let\csname @middummy\alph{@bracketlevel}\endcsname\given%
\global\def\given{\mskip#5\csname#4\endcsname\vert\mskip#6}\csname#4l\endcsname#2##1\csname#4r\endcsname#3%
\global\expandafter\let\expandafter\given\csname @middummy\alph{@bracketlevel}\endcsname
\addtocounter{@bracketlevel}{-1}}%
}
\def\bracketfactory#1#2#3{%
\@bracketfactory{#1}{#2}{#3}{relax}{1mu plus 0.25mu minus 0.25mu}{0.6mu plus 0.15mu minus 0.15mu}
\@bracketfactory{b#1}{#2}{#3}{big}{1mu plus 0.25mu minus 0.25mu}{0.6mu plus 0.15mu minus 0.15mu}
\@bracketfactory{bb#1}{#2}{#3}{Big}{2.4mu plus 0.8mu minus 0.8mu}{1.8mu plus 0.6mu minus 0.6mu}
\@bracketfactory{bbb#1}{#2}{#3}{bigg}{3.2mu plus 1mu minus 1mu}{2.4mu plus 0.75mu minus 0.75mu}
\@bracketfactory{bbbb#1}{#2}{#3}{Bigg}{4mu plus 1mu minus 1mu}{3mu plus 0.75mu minus 0.75mu}
}
\def\abs#1{\vert#1\vert}
\def\babs#1{\bigl\vert#1\bigr\vert}
\def\bbabs#1{\Bigl\vert#1\Bigr\vert}
\def\bbbabs#1{\biggl\vert#1\biggr\vert}
\renewcommand\section{\@startsection {section}{1}{\z@}%
{-3.5ex \@plus -1ex \@minus -.2ex}%
{1.3ex \@plus.2ex}%
{\center\small\sc\mathversion{bold}\MakeUppercase}}
\def\subsection#1{\@startsection {subsection}{2}{0pt}%
{-3.5ex \@plus -1ex \@minus -.2ex}%
{1ex \@plus.2ex}%
{\bf\mathversion{bold}}{#1}}
\def\subsubsection#1{\@startsection{subsubsection}{3}{0pt}%
{\medskipamount}%
{-10pt}%
{\normalsize\itshape}{\kern-2.2ex. #1.}}
\def\blfootnote{\xdef\@thefnmark{}\@footnotetext}
\newcommand\ed{\stackrel{d}{=}}
\def\l{\lambda}
\def\BC{\mathrm{BC}}
\def\e{\eps}
\def\Bin{\text{Bin}}
\def\Hg{\text{Hg}}
\def\a{\alpha}
\begin{document}

\title{\sc\bf\large\MakeUppercase{Stationary distribution approximations of Two-island Wright-Fisher and seed-bank models using Stein's method}}
\author{\sc Han~L.~Gan and Maite Wilke--Berenguer}
\date{\it University of Waikato and Humboldt-Universit\"at zu Berlin}
\maketitle

\begin{abstract}
We consider two finite population Markov chain models, the two-island Wright-Fisher model with mutation, and the seed-bank model with mutation. Despite the relatively simple descriptions of the two processes, the the exact form of their stationary distributions is in general intractable. For each of the two models we provide two approximation theorems with explicit upper bounds on the distance between the stationary distributions of the finite population Markov chains, and either the stationary distribution of a two-island diffusion model, or the beta distribution. We show that the order of the bounds, and correspondingly the appropriate choice of approximation, depends upon the relative sizes of mutation and migration. In the case where migration and mutation are of the same order, the suitable approximation is the two-island diffusion model, and if migration dominates mutation, then the weighted average of both islands is well approximated by a beta random variable. Our results are derived from a new development of Stein's method for the stationary distribution of the two-island diffusion model for the weak migration results, and utilising the existing framework for Stein's method for the Dirichlet distribution. 
\end{abstract}


\section{Introduction}
The Wright-Fisher model, originating in~\cite{Fisher, Wright31} is one of the canonical probabilistic models used in mathematical population genetics. In this work, we study the two-island model, which allows for the population to be subdivided into two islands, with migration between them. Our goal is to study the approximation of stationary distributions for finite population two-island Wright-Fisher Markov chains with the stationary distributions of scaling diffusion limits. We begin by carefully defining the two models. 

The finite population two-island Wright-Fisher Markov chain generalises the Wright-Fisher model by having two islands with their individual Wright-Fisher dynamics, and allowing migration between the two. The model is a Markov chain used to study genetic type frequencies from generation to generation for two interacting populations of fixed sizes. The formal mathematical definition of the model is as follows. 

Given $(X,Y)$ let $(X',Y')$ be one step ahead in Markov chain. For $N, M, c \in \mathbb{Z}^+$, $p_1, p_2, q_1, q_2 \in (0,1)$, and $1 \leq c \leq \min\{M,N\}$, define the following conditionally independent random variables,
\ba{
A|(X,Y) &\sim \Bin(N-c, p_1 + (1-p_1 - p_2)X),\\
B|(X,Y) &\sim \Bin(c,p_1 + (1-p_1 - p_2)X),\\
C|(X,Y) &\sim \Bin(c, q_1 + (1-q_1-q_2)Y),\\
D|(X,Y) &\sim \Bin(M-c, q_1 + (1-q_1-q_2)Y).
}
Then $(NX',MY') | (X,Y) = (A + C, B + D) | (X,Y)$.

We briefly describe a standard interpretation of the mathematical formulation. Given two islands of fixed size $N$ and $M$ individuals, we consider a haploid population where individuals take genetic type $1$ or $2$, and set $(X_t, Y_t)$ as the Markov chain that tracks the proportion of type $1$ on the two respective islands from one generation to the next. Given the parent generation, we think of the process that generates the next offspring generation in a two step manner, first the offspring choose a parent, and then their type is decided by the type of their parents and a potential mutation step.

For the first island, independently and uniformly at random, $N-c$ offspring chose a parent from the first island, and $c$ offspring chose a parent from the second island. Similarly in the second island, $M-c$ parents are chosen from the second island, and $c$ chosen are from the first island. The parameter $c$ therefore represents the amount of migration between the two islands from one generation to the next. The types of the offspring depend upon the following rules. For offspring of parents from first island, if the chosen parent is of type $1$ $(2)$, then the child mutates to type $2$ $(1)$ with probability $p_2$ $(p_1)$ or otherwise has the same type as its parent. For offspring of parents from the second island, we follow the same rules except with mutation probabilities $q_1$ and $q_2$.

The seed-bank model is similar in motivation to the two-island Wright-Fisher model in that it is used to model genetic type frequencies from generation to generation for two interacting populations of fixed size, but it differs from the two-island Wright-Fisher model, by disabling reproduction and mutation in the second island. 

Given $(X,Y)$ let $(X',Y')$ be one step ahead in Markov chain. For $N, M, c \in \mathbb{Z}^+$, $p_1, p_2 \in (0,1)$, and $1 \leq c \leq \min\{M,N\}$,
\ba{
A|(X,Y) &\sim \Bin(N-c, p_1 + (1-p_1 - p_2)X),\\
B|(X,Y) &\sim \Bin(c,p_1 + (1-p_1 - p_2)X),\\
C|(X,Y) &\sim \Hg(M,MY,c),\\
D|(X,Y) &\sim \Hg(M,MY,M-c).
}
Importantly, the above random variables are pairwise conditionally independent except $D|(X,Y) = M-C|(X,Y)$.

We again briefly describe a standard interpretation of the mathematical formulation. Let the populations be of sizes $N$ and $M$, and we will call them the active population and seed-bank population respectively. We again consider a haploid population with types $1$ or $2$, and set $(X_t, Y_t)$ as the Markov chain that tracks the proportion of type $1$ on the two respective populations from one generation to the next. Given the parent generation, the dynamics are described as follows.

As per the two-island model, let $c$ represent the migration rate. For the active population, $N-c$ offspring choose a parent from the first island independently and uniformly at random with replacement. $c$ individuals from the seed-bank parent generation are chosen uniformly \emph{without} replacement to migrate to the active population. For the seed-bank, the remaining $M-c$ individuals who did not migrate remain `dormant' and stay over into the next generation, and $c$ offspring are sampled uniformly at random with replacement from the active population. Mutation occurs, but only for offspring of the active population. As in the two-island model, if the chosen parent is of type $1$ $(2)$, then the child mutates to type $2$ $(1)$ with probability $p_2$ $(p_1)$ or otherwise has the same type as its parent. Individuals that migrate from the seed bank do not change their types.

\emph{Seed banks} arise in populations who exhibit \emph{dormancy},  which describes de capability of an individual to enter and exist a state of reduced metabolic activity during which it cannot reproduce,  but is at the same time better protected from death, e.g. through difficult environmental conditions. The set of dormant individuals is then referred to as a seed bank. This mechanism is particularly prevalent among microbial populations, but has independently developed several times across the tree of life (cf. \cite{LJ11,SL18}). It has in recent years received increased attention and become an active field of research in population genetics and beyond, see  \cite{NatCom}  and references therein. One key parameter is the time-scale of dormancy compared to that of reproduction (or the population size).  The model for dormancy presented above (and introduced in \cite{Blathetal2016}) describes a so-called \emph{strong} seed bank effect,  where dormancy occurs at the same scale as reproduction. In this regime, the frequency process described above converges to the so-called seed-bank diffusion, while the ancestry converges to the seed bank coalescent \cite{Blathetal2016}.

Closed form solutions for the stationary distributions for both of the two-island and seed-bank models is known to be a difficult problem with no nice solutions \cite{Blathetal2019}, and hence good approximations are of particular value. In this paper we provide two approximations each for both models, namely the stationary distribution of the two-island Wright-Fisher diffusion model, and the beta distribution. All of the bounds for the approximations are explicit, including the constants, and are general for any possible values of the parameters $N, M, p_1, p_2, q_1, q_2, c$. 

In this paper we investigate approximations for the stationary distributions of these processes, in particular as the population sizes grow. We restrict the scope of this paper to two allele type models with mutation and only two islands, but note that generalising to multiple types and multiple islands would not require any additional theoretical mathematical complexity, but significant additional tedious bookkeeping notation and calculations. The two main driving forces in the models are mutation and migration. As a result there are three main scenarios to consider. Namely if mutation dominates migration, if migration dominates mutation, or if mutation and migration are of the same order. In this paper, we focus on the latter two scenarios, as the first case is relatively uninteresting as the islands effectively operate independently in this scenario. More precisely, to assess the accuracy and applicability of the approximations we consider the following asymptotic regimes for the mutation and migration rates. We will follow common population genetics terminology and call mutation \emph{weak} if mutation is rare in the sense that the mutation probabilities $p_1, p_2$ are of order $\bigo(1/N)$ and similarly for $q_1, q_2$. Throughout the rest of this paper we will be assuming that mutation is weak. In contrast, we will vary the level of migration. We will consider migration to be of the order $\bigo(N^\eps)$ where $\eps \in [0,1]$. If $\eps = 0$, this corresponds to a constant order of migration, which we will also therefore call weak. If $\eps > 0$, then we will call migration strong.

Assuming weak mutation, the appropriate scaling diffusion limit depends upon whether migration is strong or weak. We show that under strong migration, the two islands are `well-mixed', and the appropriate approximation is to approximate the stationary distribution of the weighted average of the two islands with a beta distribution. In contrast, under weak migration, each island maintains its individual characteristics and the appropriate approximating distribution is the stationary distribution of the two-island diffusion model. One approach to understand this dichotomy is to consider the coalescent process under weak or strong migration, see for example~\cite{NK2002}. If migration occurs on the same scaling as mutation, then lineages will usually coalesce without migration. On the other hand with strong migration, migration occurs a larger number of times before coalescence, and in the limit the standard one-island coalescent is achieved.

Before we present the main results, we first define the two distributions we use as reference measures. We then describe the general order the bounds for the approximations for both models and both distributions.
\subsection{Approximating distributions}
Denote by $\text{Beta}(a_1, a_2)$ the Beta distribution with parameters $a_1>0$ and $a_2>0$ defined on the unit interval $[0,1]$ and density function
\ba{
\psi(x) = \frac{\Gamma(a_1 + a_2)}{\Gamma(a_1) \Gamma(a_2)}x^{a_1 -1}(1-x)^{a_2-1}.
}

For our beta approximation results, we first define some technical conditions. For a full discussion of these definitions see~\cite{GRR17}. For $m \ge 1$, let $\BC^{m,1}([0,1])$ be the class of bounded functions on $[0,1]$ that have $m$ bounded and continuous derivatives  
and whose $m^{\mathrm{th}}$ derivative is Lipschitz continuous.
For some $h$ in $\BC^{m,1}([0,1])$, let $\Vert h\Vert_{\infty}$ be the sup-norm of $h$, and if the $k^{\mathrm{th}}$ derivative of $h$ exists, let 
\begin{equation*}
|h|_k := \left\Vert \frac{d^k h}{d x^k} \right\Vert_{\infty} \qquad \text{and} \qquad |h|_{k,1} :=  \sup_{x,y} \left| \frac{d^k h(x)}{d x^k} - \frac{d^k h(y)}{d y^k}  \right| \frac{1}{|x - y|}.
\end{equation*}
Our beta approximation error bounds will be of the following form. Given a random variable $X \in [0,1]$, and $Z \sim \Beta(a_1, a_2)$,  for any $h \in \BC^{2,1}[0,1]$, we will derive bounds for $|\E h(X) - \E h(Z)|$.

To define the stationary distribution for the two-island diffusion model, we first define the generator of the diffusion. For functions $f \in \mathrm{C}^2[0,1]^2$, the generator of the two-island diffusion is as follows,
\ban{
\cA f(x,y) &= [ a_1 - (a_1 + a_2)x + c_1(y-x)] f_x(x,y) + \frac\alpha2x(1-x) f_{xx}(x,y) \notag\\
	&\ \ \ +  [ b_1 - (b_1 + b_2)y + c_2(x-y)] f_y(x,y) + \frac\beta2y(1-y) f_{yy}(x,y).\label{eq:gen}
}
Let $\TI:=\TI(a_1, a_2, b_1, b_2, c_1, c_2, \alpha, \beta)$ denote the stationary measure associated with two-island diffusion, and we will often suppress the parameters for brevity. 

Despite the relatively simple formulation of the generator of the two island diffusion~\eq{eq:gen}, in general the exact form of the stationary distribution TI remains unknown, see~\cite{BurdenGriffiths2019} for example and for some partial solutions. However, the moments of $\TI$ can be easily computed by taking expectations of~\eq{eq:gen} with respect to polynomial test functions, see~\cite[Lemma~2.1]{Blathetal2019} for example. In this paper we will, for any random vector $(X,Y)\in [0,1]^2$ and any polynomial function $h: [0,1]^2 \mapsto \IR$, aim to calculate explicit bounds for $\abs{\IE h(X, Y) - \IE h(Z_1, Z_2)},\label{eq:aim}$
where $(Z_1,Z_2) \sim \TI$. Given only the moments of $\TI$ are explicitly known, it suffices to focus on test functions $h$ that take the form $h(x,y) = x^{n}y^{m}$, where $n, m$ are non-negative integers. 

\subsection{Main results}
\begin{theorem}\label{thm:WFs}
Let $(X,Y)$ be distributed as the proportions of type $1$ in both islands for the stationary distribution of the finite population two-island Wright-Fisher model. If mutation is weak and migration is $\bigo(N^\eps)$ for $\eps \in [0,1]$, then:

\begin{enumerate}
\item Set $(Z_1,Z_2) \sim \TI$ with parameters $a_1 = 2p_1(N-c)$, $a_2 = 2p_2(N-c)$, $b_1 = 2\frac NM (M-c){M}q_1$, $b_2 = 2\frac NM (M-c) q_2$, $c_1 = 2c$, $c_2 = \frac{2cN}{M}$, $\alpha = 2$ and $\beta = \frac{2N}{M}$. Then for any $h(x,y) = x^{n}y^{m}$,
\ba{
\abs{\E h(X,Y) - \E h(Z_1, Z_2)} &\leq \bigo(N^{\max\{2\eps -1, -1/2\}}).
}
\item Set $Z \sim \Beta(a_1,a_2)$ with parameters $a_1 = 2(Np_1 + Mq_1)$ and $a_2 = 2(Np_2 + Mq_2)$. Then for any $h \in \BC^{2,1}[0,1]$,
\ba{
\left| \E h\left(\frac{N}{N+M}X +\frac{M}{N+M} Y\right) - \E h(Z) \right| \leq  \bigo(N^{-\eps/2}).
}
\end{enumerate}
\end{theorem}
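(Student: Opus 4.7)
The approach for both parts is Stein's method by generator comparison. In each case I solve a Stein equation $\cA_0 f = h - \E h(Z)$ for the target distribution and exploit stationarity of $(X,Y)$ under the Wright--Fisher transition operator $P$ to write
\[
\E h(X,Y) - \E h(Z) \;=\; \E\!\left[ \cA_0 f(X,Y) - c_N (P-I) f(X,Y) \right]
\]
for any constant $c_N$, since $\E[(P-I)f(X,Y)] = 0$. Choosing $c_N$ of order $N$ so that $c_N(P-I) \approx \cA_0$ after the appropriate time rescaling, the error reduces to a pointwise comparison of the two operators via a Taylor expansion of $(Pf)(x,y) - f(x,y)$ in the one-step increments.

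For Part 1, I take $\cA_0 = \cA$ as in \eqref{eq:gen}. The first step is to bound the derivatives $|f|_k$ of the Stein solution for monomial test functions $h(x,y) = x^n y^m$. Since the moments of $\TI$ can be computed recursively from \eqref{eq:gen}, the Stein equation can be inverted within the space of bivariate polynomials of degree $n+m$, yielding explicit polynomial formulas for $f$ and hence explicit bounds on its derivatives in terms of $a_i, b_i, c_i, \alpha, \beta$. This is the new Stein's method development alluded to in the abstract. Next I compute the conditional moments of the one-step increments from the binomial structure: the drift matches $\cA$'s drift exactly under the stated parameter identification up to a weak-mutation residual of order $c/N = \bigo(N^{\eps-1})$; the binomial variance matches $\alpha x(1-x)/N$ up to a migration-induced error of order $\bigo(N^{\eps-1})$; and $\Cov(X',Y'\mid X,Y) = 0$ by the conditional independence of the binomials, consistent with the absence of cross-derivative terms in $\cA$. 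The third-order Taylor remainder contributes $\bigo(N^{-3/2})$ per step. Multiplying by $c_N$, weighting against the derivative bounds $|f|_2, |f|_3$ which themselves carry powers of $N^\eps$ coming from the migration coefficients $c_1, c_2$, and collecting the two competing error sources produces the stated rate $\bigo(N^{\max\{2\eps-1,\,-1/2\}})$ with the crossover at $\eps = 1/4$.

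For Part 2, I take $\cA_0 = \cA_{\Beta}$ with $\cA_{\Beta} g(w) = [a_1(1-w) - a_2 w] g'(w) + w(1-w) g''(w)$, using the derivative bounds on $g$ for $h \in \BC^{2,1}[0,1]$ from \cite{GRR17}. Setting $W = \frac{N}{N+M}X + \frac{M}{N+M}Y$, I compute $\E[W'-W \mid X,Y]$ and observe that the migration contributions cancel exactly in the weighted sum, leaving a drift that, after choosing $c_N$ of order $N+M$, matches the $\cA_{\Beta}$ drift for the parameters $a_1 = 2(Np_1 + Mq_1), a_2 = 2(Np_2 + Mq_2)$ up to a residual of order $|X-Y|/N$ arising from the difference between the two islands' mutation rates. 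The conditional variance $\Var(W' \mid X,Y)$ similarly produces the diffusion coefficient $w(1-w)$ up to a mismatch of order $(X-Y)^2$ via a Jensen-type identity around $W$. Thus the total error is dominated by $|g|_1 \E|X-Y| + |g|_2 \E(X-Y)^2 = \bigo\bigl(\sqrt{\E(X-Y)^2}\bigr)$. A separate one-step stationarity identity for $\E(X-Y)^2$, in which migration contracts $X-Y$ at per-step rate $\sim c/N$ while sampling injects variance of order $1/N$, balances to $\E(X-Y)^2 = \bigo(N^{-\eps})$, producing the $\bigo(N^{-\eps/2})$ rate.

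The main obstacle is the derivative bounds for the TI Stein equation in Part 1: no off-the-shelf results exist, and the two-dimensional, degenerate, migration-coupled generator forces a bespoke analysis. The monomial-by-monomial inversion of the Stein equation is in principle algorithmic but bookkeeping-heavy, and care is required to track how $|f|_k$ degrades under strong migration, since this degradation is precisely what determines the $\eps$-dependent exponent in Part 1's rate. The secondary delicate step, in Part 2, is the stationarity moment bound $\E(X-Y)^2 = \bigo(N^{-\eps})$: the migration-induced contraction of $X-Y$ must be isolated with the correct $N^\eps$ coefficient, since any loss there degrades the final rate.
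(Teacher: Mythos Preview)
Your high-level strategy matches the paper's: both parts use Stein's method in the exchangeable-pairs / generator-comparison form, with Part~1 targeting the $\TI$ law and Part~2 invoking the beta framework of \cite{GRR17}. Part~2 is essentially the paper's proof, including the identification of $\E(X-Y)^2=\bigo(N^{-\eps})$ as the controlling quantity (the paper obtains this by solving the linear system in second moments that stationarity produces). In Part~1 your proposed route to the Stein factors differs: you suggest inverting the Stein equation directly within bivariate polynomials of degree $\le n+m$, whereas the paper represents the solution via moment duality with a block-counting jump process and then dominates it by a two-urn migration--death chain whose occupation numbers factor over the initial balls. The urn representation makes the $c_1,c_2$-dependence explicit and shows cleanly that $D_{xx},D_{xxx}=\bigo(1)$ uniformly as $c\to\infty$; your polynomial-inversion sketch gives no mechanism for this uniform control.

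This feeds a concrete accounting error in your Part~1 analysis. You assert that $|f|_2,|f|_3$ ``carry powers of $N^\eps$'' and that this degradation ``is precisely what determines the $\eps$-dependent exponent''. In fact the Stein factors are $\bigo(1)$ uniformly in $\eps$, and the $N^{2\eps-1}$ contribution comes entirely from the $A$-side. The second-order term in the expansion is $\frac{1}{2\lambda}\E[(X'-X)^2\mid X,Y]$, not the conditional variance, so the squared drift $\bigl(\E[X'-X\mid X,Y]\bigr)^2\sim (c/N)^2$ contributes $c^2/N=\bigo(N^{2\eps-1})$ after scaling by $1/\lambda=2N$. Likewise your appeal to $\Cov(X',Y'\mid X,Y)=0$ does not dispose of the cross term: $A_{xy}$ involves $\E[(X'-X)(Y'-Y)\mid X,Y]=\E[X'-X\mid X,Y]\,\E[Y'-Y\mid X,Y]\sim c^2/(NM)$, again $\bigo(N^{2\eps-1})$ after scaling. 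With the correct attribution---Stein factors $\bigo(1)$, second-order $A$-terms $\bigo(N^{2\eps-1})$, third-order $A$-terms $\bigo(N^{-1/2})$---the rate follows; your version lands on the right exponent only through two compensating miscounts, and taken literally would give a third-order contribution of $\bigo(N^{\eps-1/2})$ rather than $\bigo(N^{-1/2})$.
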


\begin{theorem}\label{thm:SBs}
Let $(X,Y)$ be distributed as the proportions of type 1 for the stationary distribution of the finite population seed-bank model. If mutation is weak and migration is $\bigo(N^\eps)$ for $\eps \in [0,1]$, then:
\begin{enumerate}
\item Set $(Z_1,Z_2) \sim \TI$ with  with parameters $a_1 = 2p_1(N-c)$, $a_2 = 2p_2(N-c)$, $b_1 = b_2=0$, $c_1 = 2c$, $c_2 = \frac{2cN}{M}$, $\alpha = 2$ and $\beta = 0$. Then for $h(x,y) = x^{n}y^{m}$,
\ba{
\abs{\E h(X,Y) - \E h(Z_1, Z_2)} &\leq \bigo(N^{\max\{2\eps -1, -1/2\}}).
}
\item Set $Z \sim \Beta(a_1,a_2)$ with parameters $a_1 = 2(N+ M)p_1$ and $a_2 = 2(N + M)p_2$. Then for any $h \in \BC^{2,1}[0,1]$,
\ba{
\left| \E h\left(\frac{N}{N+M}X +\frac{M}{N+M} Y\right) - \E h(Z) \right| \leq  \bigo(N^{-\eps/2}).
}
\end{enumerate}
\end{theorem}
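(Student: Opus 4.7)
The plan is to follow the Stein's method strategy as for Theorem~\ref{thm:WFs}, adapting the one-step conditional moment calculations to reflect the seed-bank dynamics: hypergeometric sampling in the dormant population replaces binomial reproduction, and the absence of mutation and genetic drift on the seed-bank side manifests as $b_1 = b_2 = 0$ and $\beta = 0$ in the limiting generator \eq{eq:gen}.

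For part (1), I invoke the Stein framework for $\TI$ developed earlier in the paper. For $h(x,y) = x^{n} y^{m}$, take $f_h \in \mathrm{C}^3([0,1]^2)$ solving $\cA f_h = h - \E h(Z_1, Z_2)$ with the associated bounds on partial derivatives. By stationarity of $(X,Y)$, $\E[f_h(X',Y') - f_h(X,Y)] = 0$; a second-order Taylor expansion transforms this into a representation of $\E[\cA f_h(X,Y)]$ as the sum of (a) remainder terms in the third derivatives of $f_h$ and (b) discrepancies between the one-step conditional moments of $(X',Y')$ and the drift, variance and covariance coefficients in \eq{eq:gen}. The key new calculation is the $Y$-variance $\Var(MY' \mid X,Y) = \Var(B \mid X,Y) + \Var(D \mid X,Y) = \bigo(c)$, confirming that after the $1/M^2$ rescaling the seed bank contributes no Wright-Fisher-type noise and matches $\beta = 0$ with error $\bigo(c/M^2)$. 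Rescaling the generator by $N/2$, the mutation-drift errors are $\bigo(1/N)$ and the migration-induced covariance errors are $\bigo(c^2/N^2)$; combining these with the derivative bounds on $f_h$ yields the stated $\bigo(N^{\max\{2\eps-1,-1/2\}})$ rate.

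For part (2), set $W := \tfrac{N}{N+M}X + \tfrac{M}{N+M}Y$ and apply the Stein framework for $\Beta(a_1,a_2)$. For $h \in \BC^{2,1}[0,1]$, let $g_h$ solve the Beta Stein equation $\tilde\cA g_h = h - \E h(Z)$ with the usual bounds on $g_h'$ and $g_h''$. Using the crucial identity $C + D = MY$ (the $c$ migrators and $M - c$ stayers partition the seed-bank individuals) one computes $\E[(N+M)(W'-W) \mid X,Y] = Np_1 - N(p_1+p_2)X$ and $\Var((N+M)W' \mid X,Y) = NX(1-X) + \bigo(1)$, both depending on $X$ rather than on $W$. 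Stationarity plus Taylor expansion therefore yields an identity that differs from $\E[\tilde\cA g_h(W)] = 0$ only by terms proportional to $\E[g_h'(W)(X - W)]$ and $\E[g_h''(W)(X(1-X) - W(1-W))]$; since $W - X = \tfrac{M}{N+M}(Y - X)$, both are controlled by $\E|Y - X|$.

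The principal obstacle is therefore to establish the contraction $\E|Y - X| = \bigo(N^{-\eps/2})$. A direct one-step calculation gives $\E[X' - Y' \mid X, Y] = (1 - c/N - c/M)\bigl[(X - Y) + \bigo(1/N)\bigr]$, so $X - Y$ contracts at rate $c(1/N + 1/M) = \Theta(N^{\eps - 1})$. Computing $\E[(X' - Y')^2 \mid X, Y]$ explicitly and then using stationarity gives a fixed-point inequality of the form $\E(X - Y)^2 \leq (1 - c(1/N + 1/M))^2 \E(X - Y)^2 + \bigo(1/N)$, whence $\E(X - Y)^2 = \bigo(1/c)$ and $\E|Y - X| = \bigo(c^{-1/2}) = \bigo(N^{-\eps/2})$ by Cauchy--Schwarz. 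Feeding this back into the Beta Stein identity yields the stated $\bigo(N^{-\eps/2})$ bound.
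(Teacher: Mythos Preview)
Your proposal is essentially correct and follows the same route as the paper: the paper derives Theorem~\ref{thm:SBs} by reading off the orders from the fully explicit bounds in Theorems~\ref{thm:SB} and~\ref{thm:SBB}, which in turn are obtained exactly as you describe (Stein's method via the generator~\eq{eq:gen} plus a Taylor expansion of the one-step increments, and the Beta/Dirichlet exchangeable-pairs theorem of \cite{GRR17}, respectively). Your identification of the key seed-bank features---hypergeometric sampling giving $\Var(MY'\mid X,Y)=\bigo(c)$ so that $\beta=0$ is matched, and the identity $C+D=MY$ collapsing $(N+M)W'$ to a single binomial plus $MY$---is exactly what drives the paper's Lemmas~\ref{lem:SB2}, \ref{lem:SB3}, \ref{lem:s2mom}, \ref{lem:s3mom}.

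Two points deserve care. First, in part~(1) you treat the Stein factors (the bounds on the partial derivatives of $f_h$) as harmless constants, but the $\TI$ parameters here are $N$-dependent: $c_1=2c$ and $c_2=2cN/M$ grow like $N^{\eps}$ while $b=0$. You must check that the factors $D_x,\dots,D_{yyy}$ of Theorem~\ref{thm:fcts} remain $\bigo(1)$ in this regime; they do, because with $b=0$ the denominators reduce to $ac_2$, $(a+c_1+c_2)ac_2$, etc., which scale like the numerators, but this is not automatic and should be stated. (Your bookkeeping line ``migration-induced covariance errors are $\bigo(c^2/N^2)$'' is the raw, un-rescaled error; after the $1/\lambda=2N$ rescaling it becomes $\bigo(c^2/N)=\bigo(N^{2\eps-1})$, which is the term that actually sets the rate.)

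Second, for $\E(X-Y)^2$ in part~(2) you use a contraction/fixed-point inequality, whereas the paper solves the full system of stationary first- and second-moment equations to extract the exact leading coefficient (stated at the end of Theorem~\ref{thm:SBB}). Both give $\E(X-Y)^2=\bigo(N^{-\eps})$; your argument is shorter and perfectly adequate for the order statement in Theorem~\ref{thm:SBs}, while the paper's version additionally yields the constant.
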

From these two main theorems, we can now easily see that when migration is weak ($\eps = 0$), then a two-island approximation is appropriate, and if migration is strong ($\eps > 0$), then the Dirichlet approximation is more suitable.

 For the remainder of this paper, Section~2 will consist of the main results in full, consisting of the explicit bounds of the approximations. It will also include an additional discussion of the results. The final section will service as an appendix that includes all of the technical proofs of the results in Section~2.
\section{Full results \& additional discussion}
The main results are derived using Stein's method, in particular the exchangeable pairs approach. The final bounds, while initially may appear to be intimidatingly long, have a relatively straightforward interpretation. One interpretation for Theorems~\ref{thm:WF} and~\ref{thm:SB}, is via the generator approach that originated in~\cite{Barbour1990}. If the finite population two-island Wright-Fisher Markov chain and the two-island diffusion model have similar stationary distributions, then their generators should also be similar. Each term of the final bounds quantify the differences between the continuous generator and a Taylor expansion of the discrete generator. The first order terms $A_x, A_y$ measure the differences in the ``drift'' components, the second order terms $A_{xx}, A_{yy}, A_{xy}$ measure the differences in the ``diffusion'' components, the remaining terms bound higher order error terms. An analogous interpretation applies for Theorems~\ref{thm:Beta} and~\ref{thm:SBB} and their beta approximations which are simpler due to the projection to the one dimensional approximating distribution. 
\begin{theorem}\label{thm:WF}
Let $(X,Y)$ be distributed as the proportions of type $1$ in both islands for the stationary distribution of the finite population two-island Wright-Fisher model, and $(Z_1,Z_2) \sim \TI$ with parameters $a_1 = 2p_1(N-c)$, $a_2 = 2p_2(N-c)$, $b_1 = 2\frac NM (M-c){M}q_1$, $b_2 = 2\frac NM (M-c) q_2$, $c_1 = 2c$, $c_2 = \frac{2cN}{M}$, $\alpha = 2$ and $\beta = \frac{2N}{M}$. Then for $h(x,y) = x^{n}y^{m}$,
\ba{
\abs{\E h(X,Y) - \E h(Z_1, Z_2)} &\leq D_x A_x + D_y A_y + D_{xx}A_{xx} + D_{yy}A_{yy} + D_{xy}A_{xy} \\
	&\hspace{0.5cm}+ D_{xxx}A_{xxx} + D_{xxy}A_{xxy} + D_{xyy}A_{xyy} + D_{yyy}A_{yyy}
}
where
\ba{
A_x &= 2c(q_1 + q_2),\\
A_y &=\frac{2cN}{M}(p_1 + p_2),\\
A_{xx} &= N(p_1+p_2)^2 + (4c + 1 + 2Np_1 + 2cq_1)(p_1+p_2) + p_1 + Np_1^2 + 2cp_1(q_1+q_2)\\
	&\hspace{0.5cm} + \frac{1}{N}(2c^2(1+p_1+q_1)),\\
A_{yy} &=\frac{N}{M} \Big\{M(q_1+q_2)^2 + (4c + 1 + 2Mq_1 + 2cp_1)(q_1+q_2) + q_1 + Mq_1^2 + 2cq_1(p_1+p_2)\\
	&\hspace{0.5cm} + \frac{1}{M}(2c^2(1+p_1+q_1)). \Big\},\\
A_{xy} &=2N\Big( p_1 + p_2 + \frac{c}{N} (1 + q_1 + q_2) \Big) \Big( q_1 + q_2 + \frac{c}{M} (1 + p_1 + p_2) \Big),\\
A_{xxx} &=  \frac16\Bigg[  \frac{1}{N} \big(  4N^2 + 4c^2 + 6cN \Big)^\frac{1}{4} +\eps_x \Bigg]^2 \Bigg[ 2\sqrt{N} + 2N \eps_X \Bigg],\\
A_{xxy}&=\frac12\Bigg[  \frac{1}{N} \big(  4N^2 + 4c^2 + 6cN \big)^\frac{1}{4} +\eps_x \Bigg]^2 \Bigg[ \frac{2N}{\sqrt M} + 2N\eps_Y \Bigg],\\
A_{xyy}&= \frac12 \Bigg[  \frac{1}{M} \big(  4M^2 + 4c^2 + 6cM \big)^\frac{1}{4} +\eps_y \Bigg]^2 \Bigg[ 2\sqrt N +2 N\eps_X \Bigg],\\
A_{yyy}&=\frac16\Bigg[  \frac{1}{M} \big(  4M^2 + 4c^2 + 6cM \big)^\frac{1}{4} +\eps_y \Bigg]^2 \Bigg[ \frac{2N}{\sqrt M} + 2N\eps_Y \Bigg],\\
\eps_x &=  (p_1 + p_2) + \frac{c}{N}(1 + q_1 + q_2),\\
\eps_y &=  (q_1 + q_2) + \frac{c}{M}(1 + p_1 + p_2),
}
and $D_x, D_y$, etc.~are defined in Theorem~\ref{thm:fcts}.
\end{theorem}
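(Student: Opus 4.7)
The plan is to apply Stein's method via the generator comparison approach. For the test function $h(x,y)=x^n y^m$, one first invokes Theorem~\ref{thm:fcts} to obtain a solution $f=f_h$ to the Stein equation $\cA f(x,y) = h(x,y) - \E h(Z_1, Z_2)$ whose partial derivatives up to third order are controlled in sup-norm by the quantities $D_x, D_y, \ldots, D_{yyy}$. Letting $\cD$ denote the one-step discrete generator of the two-island Wright-Fisher chain, $\cD f(X,Y) = \E[f(X',Y')-f(X,Y)\mid X,Y]$, stationarity of $(X,Y)$ yields $\E[N\cD f(X,Y)]=0$, so that
\[\E h(X,Y) - \E h(Z_1,Z_2) = \E\bigl[\cA f(X,Y) - N\cD f(X,Y)\bigr].\]
The factor $N$ reflects the Wright-Fisher diffusive time-rescaling implicit in the choice $a_1 = 2p_1(N-c)$, $\alpha=2$, etc.

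Next I would Taylor expand $f(X',Y')$ around $(X,Y)$ to second order with a cubic remainder and take conditional expectations to obtain
\[N\cD f(X,Y) = N m_X f_x + N m_Y f_y + \tfrac{N}{2} s_{XX} f_{xx} + N s_{XY} f_{xy} + \tfrac{N}{2} s_{YY} f_{yy} + N \E[R_3\mid X,Y],\]
where $m_X, m_Y$ are the conditional means of the increments $X'-X, Y'-Y$ and $s_{XX}, s_{XY}, s_{YY}$ their conditional second moments. Term-by-term comparison with $\cA f$, followed by the triangle inequality, reduces the problem to bounding $N$ times the discrepancy between each discrete moment and its diffusion counterpart, multiplied by the matching $D$-bound. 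These discrepancies are computed directly from the conditional Binomial laws of $A,B,C,D$: the means of $A/N$ and $C/N$ reproduce the diffusion drift $a_1 - (a_1+a_2)X + c_1(Y-X)$ exactly up to an excess bounded by $2c(q_1+q_2)=A_x$ (and analogously for the $Y$-coordinate, producing $A_y$), while the Binomial (co-)variances yield the diffusion terms $x(1-x)$ and $\tfrac{N}{M}y(1-y)$ up to remainders $A_{xx}, A_{yy}, A_{xy}$ that collect the $p^2, q^2, pq$, and cross-island corrections.

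The main obstacle is controlling the cubic Taylor remainder $N\E[R_3\mid X,Y]$, a weighted sum over $i=0,1,2,3$ of $\E[|X'-X|^i|Y'-Y|^{3-i}\mid X,Y]$ paired with the corresponding third derivatives of $f$. I would decompose $X'-X = U_X + m_X$ with $U_X = X' - \E[X'\mid X,Y]$, and similarly for $Y$. Since $A$ and $C$ are conditionally independent, the Binomial fourth central moment formula gives $\E[U_X^4 \mid X,Y]^{1/4} \le (4N^2+4c^2+6cN)^{1/4}/N$, and combining with the bound $|m_X|\le \eps_x$ established in the first-moment step yields $\E[(X'-X)^4\mid X,Y]^{1/4} \le (4N^2+4c^2+6cN)^{1/4}/N + \eps_x$, with an analogous bound in $Y$. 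A single application of Cauchy--Schwarz in the form $\E[|X'-X|^i |Y'-Y|^{3-i}\mid X,Y] \le \E[(X'-X)^{2i}\mid X,Y]^{1/2}\E[(Y'-Y)^{2(3-i)}\mid X,Y]^{1/2}$ pairs these $L^4$-bounds, which enter squared as the first bracket of $A_{xxx}, A_{xxy}, A_{xyy}, A_{yyy}$, with $L^2$-type bounds arising from $\sqrt{\Var(X'\mid X,Y)}\le 1/(2\sqrt{N})$ (together with $|m_X|$) that supply the second bracket. The binomial factors $3$ in the $xxy$ and $xyy$ mixed terms match the coefficient $\tfrac{1}{2}=\tfrac{3}{6}$ in $A_{xxy}, A_{xyy}$ versus $\tfrac{1}{6}$ in the pure terms. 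Summing the nine contributions, each multiplied by its associated $D$-constant, delivers the stated bound.
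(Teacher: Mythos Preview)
Your approach is essentially identical to the paper's: the paper packages the Taylor-expansion/generator-comparison argument you describe into the abstract exchangeable-pairs Theorem~\ref{thm:ex} (with $\lambda=\tfrac{1}{2N}$ rather than your implicit factor $N$, which accounts for the ubiquitous $2$'s in the stated $A$-constants), and then applies Lemmas~\ref{lem:2mom} and~\ref{lem:3mom} to bound the second- and third-order discrepancies exactly as you outline.

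One small technical wrinkle: your stated Cauchy--Schwarz inequality $\E[|X'-X|^i|Y'-Y|^{3-i}]\le \E[(X'-X)^{2i}]^{1/2}\E[(Y'-Y)^{2(3-i)}]^{1/2}$ would, for the pure terms $i=0,3$, require sixth moments, which do not match the displayed form of $A_{xxx}, A_{yyy}$. The paper instead uses H\"older in the form $\E|X'-X|^3 \le (\E(X'-X)^4)^{1/2}(\E(X'-X)^2)^{1/2}$ (splitting $|X'-X|^3=|X'-X|^2\cdot|X'-X|$), which is precisely what produces the ``$[L^4]^2\times[L^2]$'' structure you then correctly describe. Your narrative of the outcome is right; only the named inequality for $i=0,3$ needs adjusting. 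Likewise, $\Var(X'\mid X,Y)\le 1/N$ (not $1/(4N)$); the extra factor $2$ in the second bracket of $A_{xxx}$ comes from $1/\lambda=2N$.
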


\begin{theorem}\label{thm:Beta}
Let $(X,Y)$ be distributed as the proportions of type $1$ in both islands for the stationary distribution of the finite population two-island Wright-Fisher model, and $Z \sim \text{Beta}(a_1, a_2)$ with parameters $a_1 = 2(Np_1 + Mq_1)$ and $a_2 = 2(Np_2 + Mq_2)$. Then for any $h \in \BC^{2,1}([0,1])$,
\ba{
\Bigg| \E h\Bigg( \frac{N}{N+M} X + \frac{M}{N+M}Y\Bigg) - \E h(Z) \Bigg| \leq \frac{|h|_1}{a_1 + a_2} A_1 + \frac{|h|_2}{2(a_1 + a_2 + 1)}A_2 + \frac{|h|_{2,1}}{18(a_1 + a_2 + 2)} A_3,
}
where
\ba{
A_1 &=\frac{2NM}{(N+M)^2} | (q_1 + q_2) - (p_1 + p_2) | \sqrt{ \E(X - Y)^2}, \\
A_2 &= \frac{1}{N+M} \Big[\big( N(2p_1 + p_2) + M(2q_1 + q_2) \big)^2 + 3N(2p_1 + p_2) + 3M(2q_1 + q_2)\Big]\\
	&\ \ \ + \frac{NM}{(N+M)^2} \E(X-Y)^2 \\
A_3 & =  \frac{1}{(N+M)^2} \big[ (4N^2 + 6NM + 4M^2)^{\frac 14} + p_1 + p_2 + q_1 + q_2 \big]^2 \big[\sqrt{N+M} + p_1 + p_2 + q_1 + q_2\big].
}
Furthermore, set $M = m N$, $p_1 = \frac{\hat p_1}{N}$, $p_2 = \frac{\hat p_2}{N}$, $q_1 = \frac{\hat q_1}{N}$, $q_2 = \frac{\hat q_2}{N}$ and for some $\eps \in [0,1]$, $c = \hat c N^\eps$. Then,
\ba{
\E (X - Y)^2 = \frac{(\hat p_1 + m \hat q_1)(\hat p_2 + m \hat q_2)} { \hat c \big( \hat p_1 + m \hat q_1 + \hat p_2 + m \hat q_2\big)\big( 1 + 2(\hat p_1 + m \hat q_1) + 2(\hat p_2 + m \hat q_2)\big)}N^{-\eps} + \lito(N^{-\eps}).
}
\end{theorem}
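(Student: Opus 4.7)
The approach is Stein's method for the Beta distribution via exchangeable pairs, using the framework of \cite{GRR17}. For the explicit bound (first assertion), set $W = \frac{N}{N+M}X + \frac{M}{N+M}Y$ and form the exchangeable pair $(W,W')$ with $W' = \frac{NX' + MY'}{N+M}$ obtained by running one step of the Markov chain from stationarity. Using $\E[NX'+MY'\mid X,Y] = (Np_1+Mq_1) + N(1-p_1-p_2)X + M(1-q_1-q_2)Y$, a direct rearrangement gives
\ba{
\E[W'-W \mid X,Y] = -\lambda(W-\mu) + R, \quad R := \tfrac{NM}{(N+M)^2}\bigl((q_1+q_2)-(p_1+p_2)\bigr)(Y-X),
}
with $\mu = a_1/(a_1+a_2)$ and $\lambda = (a_1+a_2)/[2(N+M)]$. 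The remainder $R$ vanishes precisely when the total mutation rates on the two islands agree; applying Cauchy--Schwarz to $\E[R\,f'(W)]$ against the solution $f$ of the Beta Stein equation produces the $A_1$ term.

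The second and third pieces come from matching the conditional quadratic variation of $W'-W$ to the diffusion component $w(1-w)f''(w)$ of the Beta Stein operator. Since $A,B,C,D$ are conditionally independent given $(X,Y)$, the quantity $\Var(W'-W\mid X,Y)$ decomposes as a sum of four binomial variances. After expanding the success probabilities in powers of the mutation parameters and collecting terms proportional to $W(1-W)$ versus the cross-island discrepancy $(X-Y)^2$, one obtains the $A_2$ contribution: the $\frac{1}{N+M}[\cdots]$ piece accounts for the variance of $NX'+MY'$ together with residual mutation drift, while the extra $\frac{NM}{(N+M)^2}\E(X-Y)^2$ piece measures the mismatch between the actual conditional variance and the target $W(1-W)$. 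The $A_3$ term then bounds a third absolute moment of $W'-W$ via elementary moment inequalities for centred binomials. Assembling these into the Beta Stein bound from \cite{GRR17}---whose normalising constants on the derivatives of the Stein solution are precisely $\frac{1}{a_1+a_2}$, $\frac{1}{2(a_1+a_2+1)}$ and $\frac{1}{18(a_1+a_2+2)}$---yields the stated inequality.

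For the asymptotic formula (second assertion), I would derive a closed recursion for $\E U^2$ with $U := X-Y$. Analogously to the computation above,
\ba{
\E[U'\mid X,Y] = \kappa + (1-\gamma)\bigl[(1-p_1-p_2)X - (1-q_1-q_2)Y\bigr], \quad \gamma := \tfrac{c}{N}+\tfrac{c}{M},
}
with $\kappa$ a mutation-driven constant of order $1/N$. Conditional independence gives $\Cov(X',Y'\mid X,Y)=0$, so $\Var(U'\mid X,Y) = \Var(X'\mid X,Y)+\Var(Y'\mid X,Y)$. Taking expectations, using stationarity $\E U^2 = \E(U')^2$, and rearranging produces
\ba{
\E U^2 \bigl[1-(1-\gamma)^2\bigr] = \E\Var(U'\mid X,Y) + \text{(lower-order cross terms)}.
}
Under the scaling $p_i=\hat p_i/N$, $q_i=\hat q_i/N$, $c=\hat c N^{\eps}$, $M=mN$, the left factor is $2\gamma(1+\lito(1)) = 2\hat c\,(m+1)m^{-1}N^{\eps-1}(1+\lito(1))$. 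The leading piece on the right is $\tfrac{\E X(1-X)}{N}+\tfrac{\E Y(1-Y)}{M}$, and both $\E X(1-X)$ and $\E Y(1-Y)$ converge to the Beta second moment $\E W(1-W) = a_1 a_2/[(a_1+a_2)(a_1+a_2+1)]$. Substituting $a_1 = 2(\hat p_1+m\hat q_1)$, $a_2 = 2(\hat p_2+m\hat q_2)$ and dividing yields the claimed leading coefficient.

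The main obstacle is avoiding circularity at the end: extracting the leading constant requires $\E X(1-X)$ and $\E Y(1-Y)$ individually, not merely $\E W(1-W)$, so each marginal of $(X,Y)$---not just the weighted average $W$---must concentrate at the Beta limit to leading order. I would handle this in two passes. First, a crude estimate $\E U^2 = \bigo(N^{-\eps})$ follows from the recursion using only $\E|U| \le 1$ and $\kappa = \bigo(1/N)$. Then, using $X = W + \tfrac{M}{N+M}U$ and $Y = W - \tfrac{N}{N+M}U$ together with the first assertion of the theorem, one deduces $\E X(1-X), \E Y(1-Y) = \E W(1-W) + \lito(1)$, which is sharp enough when fed back into the recursion to pin down the leading coefficient.
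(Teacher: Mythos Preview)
Your treatment of the first assertion is essentially the paper's proof: the same exchangeable pair, the same linear regression with remainder $R$ proportional to $(q_1+q_2)-(p_1+p_2)$ times $X-Y$, and the same second- and third-moment computations (the paper merges $A+B$ and $C+D$ into two binomials $F\sim\Bin(N,\cdot)$ and $G\sim\Bin(M,\cdot)$, but this is cosmetic). The Stein factors $\tfrac{1}{a_1+a_2}$, $\tfrac{1}{2(a_1+a_2+1)}$, $\tfrac{1}{18(a_1+a_2+2)}$ are exactly the ones invoked from \cite{GRR17}.

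For the asymptotic formula your route diverges from the paper and carries a genuine gap. The paper does \emph{not} pass through the beta limit at all: it writes down the five stationary moment equations for $\E X,\E Y,\E X^2,\E Y^2,\E XY$ (obtained by taking expectations of the one-step conditional moments), solves this linear system exactly, and then substitutes the scaled parameters to read off the leading term of $\E(X-Y)^2$. This works uniformly for $\eps\in[0,1]$.

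Your bootstrap argument, by contrast, needs $\E X(1-X)$ and $\E Y(1-Y)$ to converge to the beta value $a_1a_2/[(a_1+a_2)(a_1+a_2+1)]$, and you justify this via the first assertion plus the crude bound $\E U^2=O(N^{-\eps})$. That works when $\eps>0$, but at $\eps=0$ the crude bound is the trivial $\E U^2=O(1)$, the beta approximation bound in the first assertion does not tend to zero, and in fact the two islands are \emph{not} well mixed: the marginals of $X$ and $Y$ are close to the two-island diffusion stationary law, not to the beta, so $\E X(1-X)$ and $\E Y(1-Y)$ need not agree with one another or with the beta moment. Your recursion then delivers the wrong input at $\eps=0$. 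The fix is to drop the bootstrap entirely and solve the $5\times 5$ moment system directly, as the paper does; this is pure linear algebra and requires no distributional input.
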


\begin{theorem}\label{thm:SB}
Let $(X,Y)$ be distributed as the proportions of type $1$ for the stationary distribution of the finite population seed-bank model, and $(Z_1,Z_2) \sim \TI$ with parameters $a_1 = 2p_1(N-c)$, $a_2 = 2p_2(N-c)$, $b_1 = b_2=0$, $c_1 = 2c$, $c_2 = \frac{2cN}{M}$, $\alpha = 2$ and $\beta = 0$. Then for $h(x,y) = x^{n}y^{m}$,
\ba{
\abs{\E h(X,Y) - \E h(Z_1,Z_2)}&\leq D_y A_y + D_{xx}A_{xx} + D_{yy}A_{yy} + D_{xy}A_{xy} \\
	&\hspace{0.5cm}+ D_{xxx}A_{xxx} + D_{xxy}A_{xxy} + D_{xyy}A_{xyy} + D_{yyy}A_{yyy},
}
where
\ba{
A_y &=\frac{2cN}{M}(p_1 + p_2),\\
A_{xx} &= N (p_1+p_2)^2 + Np_1^2+(4c+3 + (4c+2+2N)p_1)(p_1+p_2) + (2c+3)p_1\\
	&\ \ \ \ +  \frac{1}{N}(3c^2 + 2c^2p_1 + cp_2) ,\\
A_{yy} &= \frac{4c^2N(1 + p_1)}{M(M-1)},\\
A_{xy} &=\frac{2}{M} \Big[ (2cN + cNp_1)(p_1+p_2) + (2c^2 + 2cN)p_1 + 3c^2+cNp_1^2 \frac{c^2M}{M-1}\Big],\\
A_{xxx} &=\frac16\Bigg[\frac1N\Big(4N^2 + 2cN + \eps_{M,c} \Big)^\frac14 + p_1+p_2+\frac{c}{N}\Bigg]^2\Bigg[ \sqrt N + N(p_1+p_2)+c \Bigg],\\
A_{xxy} &=\frac12\Bigg[\frac1N\Big( 4N^2 + 2cN + \eps_{M,c}  \Big)^\frac14 + p_1+p_2+\frac{c}{N}\Bigg]^2 \Bigg[\sqrt{ \frac{5cN^2}{4M^2}} + \frac{cN}{M}(1 + p_1 + p_2)\Bigg],\\
A_{xyy} &=\frac12\Bigg[ \frac{1}{M} \Big(6c^2 + \eps_{M,c}\Big)^\frac14 + \frac{c}{M}(1 + p_1 + p_2) \Bigg]^2 \Bigg[\sqrt N + N(p_1+p_2)+c \Bigg],\\
A_{yyy} &= \frac16\Bigg[ \frac{1}{M} \Big(6c^2 + \eps_{M,c}\Big)^\frac14 + \frac{c}{M}(1 + p_1 + p_2) \Bigg]^2 \Bigg[\sqrt{ \frac{5cN^2}{4M^2}} + \frac{cN}{M}(1 + p_1 + p_2))\Bigg],\\
\eps_{M,c} &=  \frac{cM}{4(M-1)(M-2)(M-3)} \Big[M^2(1+c) + M(1 + 6c + c^2) + 6c^2\Big],
}
and $D_x, D_y$, etc.~are defined in Theorem~\ref{thm:fcts}.
\end{theorem}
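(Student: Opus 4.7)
The plan is to apply the generator-based Stein's method of \cite{Barbour1990}, taking $\cA$ in \eqref{eq:gen} specialised to the seed-bank parameters ($b_1 = b_2 = 0$, $\beta = 0$) as the Stein operator. For the polynomial test function $h(x,y) = x^n y^m$, let $f = f_h$ solve the Stein equation $\cA f(x,y) = h(x,y) - \E h(Z_1, Z_2)$, where Theorem~\ref{thm:fcts} supplies uniform bounds $D_x, D_y, D_{xx}, D_{xy}, D_{yy}$ and Lipschitz-type constants $D_{xxx}, D_{xxy}, D_{xyy}, D_{yyy}$ for its partial derivatives. The key identity is that at stationarity $\E[f(X',Y') - f(X,Y)] = 0$, so, choosing the time-rescaling constant $2N$ consistent with $\alpha = 2$,
\be{\E h(X,Y) - \E h(Z_1, Z_2) = \E\Bigl[\cA f(X,Y) - 2N\,\E\bigl[f(X',Y') - f(X,Y) \bigm| X,Y\bigr]\Bigr].}

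The core of the proof is then to Taylor-expand the one-step increment to second order with a Lipschitz-type third-order remainder, take conditional expectations given $(X,Y)$, and match term by term against $\cA f(X,Y)$. The first-order matching generates the bound $A_y$: the $x$-drift matches $\cA f$ exactly, because seed-bank migrants arrive without mutating so $\E[C\mid X,Y] = cY$, which explains the absence of an $A_x$ term in the statement; the $y$-drift leaves a residual proportional to $p_1(1-X) - p_2 X$ coming from the mutation of the $B$-offspring that are dispatched to the seed bank, bounded by $A_y = \tfrac{2cN}{M}(p_1+p_2)$. The second-order matching generates $A_{xx}, A_{yy}, A_{xy}$ by computing the conditional variances and covariance of $(\Delta X, \Delta Y)$, where $A, B$ are conditionally independent binomials but $C$ and $D = M - C$ are conjugate hypergeometrics sharing all their randomness; this coupling is precisely what creates the cross contribution $A_{xy}$, and together with $\beta = 0$ in the reference diffusion it yields the seed-bank-specific $A_{yy} = 4c^2 N(1 + p_1)/[M(M-1)]$, which comes purely from hypergeometric sampling variance rather than from a reproduction term. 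The Lipschitz remainder is bounded by absolute third mixed moments of $(\Delta X, \Delta Y)$ against the constants $D_{xxx}, D_{xxy}, D_{xyy}, D_{yyy}$; applying Cauchy--Schwarz together with standard third- and fourth-moment estimates for binomial and hypergeometric random variables produces the fourth-root expressions $(4N^2 + 2cN + \eps_{M,c})^{1/4}$ and $(6c^2 + \eps_{M,c})^{1/4}$ appearing in $A_{xxx}, A_{xxy}, A_{xyy}, A_{yyy}$, with $\eps_{M,c}$ collecting the hypergeometric fourth-moment correction.

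The main obstacle is the hypergeometric coupling $D = M - C$. In the two-island Wright--Fisher case of Theorem~\ref{thm:WF}, $C$ and $D$ are independent binomials and the full $\tfrac{\beta}{2} y(1-y) f_{yy}$ term is supplied by reproduction in the second island. Under seed-bank dynamics this term vanishes, but a residual sampling-variance contribution of order $c/M^2$ remains and is simultaneously carried by $\Delta X$ and $\Delta Y$, producing both a non-trivial $A_{xy}$ and the asymmetric $A_{yy}$. Correctly propagating the hypergeometric correction factors $(M-c)/(M-1)$, $M-2$, $M-3$ through the second- and third-moment calculations, in particular into the constant $\eps_{M,c}$, is the most delicate piece of bookkeeping; everything else is elementary Taylor expansion, collection of like terms, and invocation of the Stein-equation derivative bounds supplied by Theorem~\ref{thm:fcts}.
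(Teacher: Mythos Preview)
Your proposal is correct and follows essentially the same approach as the paper: the paper packages the argument as an application of the exchangeable-pairs Theorem~\ref{thm:ex} with $\lambda = \tfrac{1}{2N}$, $R_1 = 0$, $R_2 = \tfrac{c}{M}(p_1 - (p_1+p_2)X)$, and then invokes Lemmas~\ref{lem:SB2} and~\ref{lem:SB3} for the second- and third-order terms, which is exactly the Taylor-matching you describe. One small terminological correction: the $D_{xxx}$, etc.\ from Theorem~\ref{thm:fcts} are sup-norm bounds on the third partial derivatives of $f_h$ (used with the Lagrange-form third-order remainder), not Lipschitz constants of the second derivatives, and the paper controls the third moments via H\"older and Minkowski rather than Cauchy--Schwarz per se---but this does not affect the substance of your argument.
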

\begin{theorem}\label{thm:SBB}
Let $(X,Y)$ be distributed as the proportions of type $1$ for the stationary distribution of the finite population seed-bank model, and $Z \sim \text{Beta}(a_1, a_2)$ with parameters $a_1 = 2(N+M)p_1$ and $a_2 = 2(N+M)p_2$. Then for any $h \in \BC^{2,1}([0,1])$,
\ba{
\Bigg| \E h\Bigg( \frac{N}{N+M} X + \frac{M}{N+M}Y\Bigg) - \E h(Z) \Bigg| \leq \frac{|h|_1}{a_1 + a_2} A_1 + \frac{|h|_2}{2(a_1 + a_2 + 1)}A_2 + \frac{|h|_{2,1}}{18(a_1 + a_2 + 2)} A_3,
}
where
\ba{
A_1 &=2M(p_1+p_2) \sqrt{ \E(X - Y)^2}, \\
A_2 &= N(p_1 + p_2)^2 + (p_1 + p_2)+ \frac{M}{N+M}\sqrt{\E (X - Y)^2},\\
A_3 & =  \frac{2}{N(N+M)}\Big[(2N)^\frac12 + N(p_1 + p_2)\Big]^2 \Big[N^\frac12 + N(p_1 + p_2)\Big].
}
Furthermore, set $M = m N$, $p_1 = \frac{\hat p_1}{N}$, $p_2 = \frac{\hat p_2}{N}$, and for some $\eps \in [0,1]$, $c = \hat c N^\eps$. Then,
\ba{
\E (X - Y)^2 = \frac{m\hat p_1 \hat p_2} { \hat c (\hat p_1 +\hat p_2)\big( 1 + 2(m + 1)(\hat p_1 + \hat p_2) \big)}N^{-\eps} + \lito(N^{-\eps}).
}
\end{theorem}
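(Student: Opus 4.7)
My plan is to apply Stein's method for the beta distribution. For a test function $h \in \BC^{2,1}([0,1])$, let $f$ solve the Stein equation
\ba{
\cA f(w) := w(1-w) f''(w) + (a_1 - (a_1+a_2)w) f'(w) = h(w) - \E h(Z),
}
so that $\E h(W) - \E h(Z) = \E[\cA f(W)]$ where $W = \tfrac{NX + MY}{N+M}$. The solution bounds from the framework of \cite{GRR17} give $\norm{f'}_\infty \le |h|_1/(a_1+a_2)$, $\norm{f''}_\infty \le |h|_2/[2(a_1+a_2+1)]$, and $|f''|_1 \le |h|_{2,1}/[3(a_1+a_2+2)]$; these, together with the factor $1/6$ coming from the second-order Taylor remainder, account exactly for the prefactors appearing in front of $A_1, A_2, A_3$.

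The key structural observation for the seed-bank model is that $C+D = MY$ is deterministic, so
\ba{
W' - W = \frac{A + B - NX}{N+M}, \quad \text{with } A + B \mid (X,Y) \sim \Bin(N, \pi(X)),
}
where $\pi(X) = p_1 + (1-p_1-p_2)X$. This yields the clean identities $\E[W' - W \mid X, Y] = \tfrac{N[p_1 - (p_1+p_2)X]}{N+M}$ and $\Var(W \mid X,Y) = \tfrac{N\pi(X)(1-\pi(X))}{(N+M)^2}$. Setting $\lambda = N/[2(N+M)^2]$, stationarity gives $\E[f(W') - f(W)] = 0$; second-order Taylor expansion of $f$ around $W$ and conditioning on $(X,Y)$ then allows me to rewrite $\E[\cA f(W)] = \E[\cA f(W)] - \lambda^{-1}\E[f(W') - f(W)]$ as a sum of four explicit error terms.

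Using the decomposition $p_1 - (p_1+p_2)X = [p_1 - (p_1+p_2)W] + \tfrac{M(p_1+p_2)}{N+M}(Y - X)$, these four errors are: (i) a drift mismatch $-2M(p_1+p_2)\E[(Y-X) f'(W)]$, bounded by $\norm{f'}_\infty A_1$ via Cauchy--Schwarz; (ii) a diffusion mismatch $\E[(W(1-W) - \pi(X)(1-\pi(X))) f''(W)]$, expanded via $X - W = \tfrac{M}{N+M}(X-Y)$ and $\pi(X) - X = p_1 - (p_1+p_2)X$; (iii) a squared-drift term $-\tfrac{1}{2\lambda}\E[(\E[W'-W\mid X,Y])^2 f''(W)] = -N\E[(p_1 - (p_1+p_2)X)^2 f''(W)]$, contributing the $N(p_1+p_2)^2$ piece of $A_2$ (so (ii) and (iii) combined are bounded by $\norm{f''}_\infty A_2$); and (iv) the Taylor remainder bounded by $\tfrac{|f''|_1}{6}\lambda^{-1}\E|W'-W|^3 = \tfrac{|f''|_1}{3N(N+M)}\E|A+B-NX|^3$. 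For (iv) I apply $\E|\zeta|^3 \le (\E \zeta^4)^{1/2} (\E \zeta^2)^{1/2}$ followed by Minkowski's inequality to peel off the deterministic shift from the centered binomial, and bound the second and fourth centered moments of $\Bin(N,\pi(X))$ crudely by $N$ and $2N^2$ to produce the factored form in $A_3$.

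For the asymptotic expansion of $\E(X-Y)^2$, I use the stationarity identity $\E(X-Y)^2 = \E(X'-Y')^2$. A direct calculation gives $\E[X'-Y' \mid X,Y] = \gamma(\pi(X) - Y)$ with $\gamma = 1 - c(\tfrac1N + \tfrac1M)$, and summing the binomial and hypergeometric conditional variances of $A, B, C, D$ (using $\Cov(C, D\mid X,Y) = -\Var(C\mid X,Y)$ since $C+D = MY$) determines $\Var(X'-Y' \mid X,Y)$ exactly. Rearranging,
\ba{
(1 - \gamma^2) \E(X-Y)^2 = \E[\Var(X'-Y' \mid X,Y)] + \gamma^2 \E\bklel (\pi(X) - Y)^2 - (X-Y)^2 \bkler.
}
Under the stated scaling, $1 - \gamma^2 \sim \tfrac{2\hat c(1+m)}{m} N^{\eps - 1}$, the leading contribution to $\E[\Var(X'-Y'\mid X,Y)]$ is $\E[X(1-X)]/N$, and all remaining pieces are $\lito(N^{-1})$. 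To evaluate $\E[X(1-X)]$ at leading order I invoke part~(1) with $h(w) = w(1-w)$: since $\norm{X - W}_2 = \tfrac{M}{N+M}\sqrt{\E(X-Y)^2} \to 0$ a priori, $\E[X(1-X)] = \E[W(1-W)] + \lito(1) \to \E[Z(1-Z)] = \tfrac{2(1+m)\hat p_1 \hat p_2}{(\hat p_1 + \hat p_2)(1 + 2(1+m)(\hat p_1 + \hat p_2))}$; dividing by $1 - \gamma^2$ then yields the stated coefficient of $N^{-\eps}$. The main obstacle is the self-referential nature of this last step, since the bound in part~(1) itself contains $\sqrt{\E(X-Y)^2}$; a bootstrap is required, first extracting a crude $\bigo(N^{-\eps/2})$ bound from the rearranged stationarity identity and then refining to the precise leading order by passing to the beta limit for $\E[X(1-X)]$.
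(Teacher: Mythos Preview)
Your derivation of the main Stein bound follows the paper's route almost exactly: both exploit the seed-bank identity $C+D=MY$ so that $NX'+MY' - (NX+MY) \stackrel{d}{=} (A+B)-NX$ with $A+B\mid(X,Y)\sim\Bin(N,\pi(X))$, both take $\lambda = N/[2(N+M)^2]$, and both feed the resulting conditional moments into the GRR17 beta framework with H\"older/Minkowski control on the third absolute moment. (One small slip: to recover the factor $(2N)^{1/2}=(4N^2)^{1/4}$ in $A_3$ you need the fourth-moment bound $\E(A+B-N\pi(X))^4\le 4N^2$, not $2N^2$.) Your variance-plus-squared-mean split for $A_2$ differs cosmetically from the paper's direct expansion of $\E[(W'-W)^2\mid X,Y]$ but yields the same estimate.

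Your route to the asymptotic of $\E(X-Y)^2$, however, is genuinely different and has a gap at $\eps=0$. The paper simply writes down the five stationary moment equations for $\E X,\E Y,\E X^2,\E Y^2,\E XY$ (from \eqref{eq:XX2}, \eqref{eq:YY2}, \eqref{sbmomx2}, \eqref{sbmomy2}, \eqref{sbmomxy}), solves the linear system, and reads off the leading term; this is mechanical but uniform in $\eps\in[0,1]$. Your stationarity identity
\[
(1-\gamma^2)\,\E(X-Y)^2 \;=\; \E\bigl[\Var(X'-Y'\mid X,Y)\bigr] + \gamma^2\,\E\bigl[(\pi(X)-Y)^2-(X-Y)^2\bigr]
\]
together with the bootstrap via the beta bound is more conceptual and works for every $\eps>0$. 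At $\eps=0$ it fails on two counts. First, the crude bound only gives $\E(X-Y)^2=O(1)$, so $\lVert X-W\rVert_2=\tfrac{M}{N+M}\sqrt{\E(X-Y)^2}$ does \emph{not} tend to zero; hence you cannot pass from $\E[X(1-X)]$ to $\E[Z(1-Z)]$ --- indeed for $\eps=0$ the pair $(X,Y)$ converges to the TI stationary law, whose $\E[X(1-X)]$ is in general strictly smaller than the beta value. Second, the cross term expands as $\gamma^2\E[(\pi(X)-Y)^2-(X-Y)^2]=-2(p_1+p_2)\E[X(X-Y)]+O(N^{-2})$, and for $\eps=0$ the factor $\E[X(X-Y)]=\E X^2-\E XY$ has a nonzero TI limit, so this term is of exact order $N^{-1}$ and competes with your ``leading'' term rather than being $o(N^{-1})$. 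To cover $\eps=0$ you would have to compute those TI second moments directly, which amounts to solving the moment system --- i.e.\ the paper's approach.
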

\begin{remark}
In Theorems~\ref{thm:Beta} and~\ref{thm:SBB}, the terms $\E(X-Y)^2$ can be computed exactly using elementary methods, but the exact solutions are too lengthy to display, hence we provide only the leading terms. 
\end{remark}
Theorems~\ref{thm:WFs} and~\ref{thm:SBs} follow directly from the above 4 theorems. The results show that that for both the two-island and seed bank models, the beta approximations are appropriate when migration is strong ($\eps > 0$), and not when migration is weak ($\eps = 0$). 
When $\eps = 0$, it is clear that a two-island approximation is suitable and the beta approximation is not. However, the case where $\eps \in (0,1/2]$ is less clear. According to Theorems~\ref{thm:WFs} and~\ref{thm:SBs}, when $\eps \in (0,1/2]$, both approximations will converge to 0. While it is not surprising that there is a range of values of $\eps$ where the bound converges to zero given the bound is $\bigo(N^{-1/2})$ when $\eps = 0$, we take some space to explain why both approximations appear to be accurate. 

From our choice of parameters in Theorems~\ref{thm:WF} and~\ref{thm:SB}, if $c = \bigo(N^\eps)$, then when $\eps > 0$, the approximating TI distribution will have $c_1$ and $c_2$ are very large compared to the other parameters. The following lemma shows that the two islands become essentially indistinguishable from each other.

\begin{lemma}\label{lem:TIbeta}
\sloppy For the stationary distribution of the two-island diffusion, let $(X,Y) \sim \TI(a_1, a_2, b_1, b_2, c, \gamma c, \alpha,\beta)$. Then as $c \to \infty$, $X - Y \stackrel{p}{\to} 0$. Analogously for the stationary distribution of the seed-bank diffusion, let $(X,Y) \sim \TI(a_1, a_2, 0, 0, c, \gamma c, \alpha, 0)$. Then as $c \to \infty$, $X - Y \stackrel{p}{\to} 0$.
\end{lemma}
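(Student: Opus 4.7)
The plan is to establish convergence in probability via $L^2$-convergence, using the stationarity identity $\E \cA f(X,Y) = 0$ for polynomial test functions $f$. The natural choice is the test function $f(x,y) = (x-y)^2$, since this directly targets the quantity we wish to control. Because $(X,Y)$ takes values in the compact set $[0,1]^2$, polynomials lie in the domain of the generator and this identity is standard.

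Applying $\cA$ from \eq{eq:gen} to $f(x,y)=(x-y)^2$ (so that $f_x = 2(x-y)$, $f_y = -2(x-y)$, $f_{xx} = f_{yy} = 2$) and collecting terms, I would obtain
\ba{
\cA f(x,y) &= 2(x-y)\bklel a_1 - b_1 - (a_1+a_2)x + (b_1+b_2)y \bkler - 2(1+\gamma)\,c\,(x-y)^2 \\
&\quad + \alpha x(1-x) + \beta y(1-y).
}
The key observation is that the two migration terms $c_1(y-x)f_x$ and $c_2(x-y)f_y$ combine to produce the single dissipative term $-2(1+\gamma)c(x-y)^2$, whose coefficient grows linearly in $c$, while all remaining terms are polynomials in $x,y$ with coefficients depending only on $a_1,a_2,b_1,b_2,\alpha,\beta$.

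Taking expectation under the stationary distribution and using $\E\cA f(X,Y) = 0$, I can solve for
\ba{
\E(X-Y)^2 = \frac{1}{2(1+\gamma)c}\,\E\bklg 2(X-Y)\bklel a_1 - b_1 - (a_1+a_2)X + (b_1+b_2)Y \bkler + \alpha X(1-X) + \beta Y(1-Y) \bklgr.
}
Since $X,Y\in[0,1]$, the expectation on the right is bounded in absolute value by a constant $K = K(a_1,a_2,b_1,b_2,\alpha,\beta)$ independent of $c$. Hence $\E(X-Y)^2 \le K/(2(1+\gamma)c) \to 0$ as $c \to \infty$, and Markov's inequality gives $X - Y \stackrel{p}{\to} 0$. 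For the seed-bank case, substituting $b_1=b_2=0$ and $\beta=0$ in the computation above leaves the dissipative term $-2(1+\gamma)c(x-y)^2$ intact and removes only bounded contributions, so exactly the same argument applies.

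I do not anticipate a serious obstacle: the only delicate point is verifying that the stationarity relation $\E \cA f(X,Y) = 0$ is legitimate for the seed-bank diffusion, where the $Y$-component is degenerate (no Brownian noise), but since $f$ is a polynomial and the process stays in $[0,1]^2$, this is immediate from the martingale characterisation. Everything else is bookkeeping on the dissipative effect of large migration.
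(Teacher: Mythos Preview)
Your proof is correct and in fact cleaner than the paper's. The paper proceeds by applying $\E\cA f=0$ to the five test functions $x,\,y,\,x^2,\,y^2,\,xy$ separately, solving the resulting linear system for $\E X,\,\E Y,\,\Var X,\,\Var Y,\,\Cov(X,Y)$, and then computing their limits as $c\to\infty$ to conclude that $\E(X-Y)\to 0$ and $\Var(X-Y)\to 0$. You instead apply the stationarity identity once, to the single test function $f(x,y)=(x-y)^2$, which isolates the dissipative term $-2(1+\gamma)c(x-y)^2$ directly and gives the explicit rate $\E(X-Y)^2=O(1/c)$ without solving any system. Your route is shorter and yields a quantitative bound; the paper's route costs more computation but produces the individual limiting moments of $X$ and $Y$, which is exactly the extra information used in the remark and conjecture immediately following the lemma.
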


In particular, in light of Theorems~\ref{thm:Beta} and~\ref{thm:SBB}, we also present the following conjecture that under the following regime, the limiting distribution of the individual islands themselves are also beta distributed. Using the moment calculations of $X$ and $Y$ in the proof of Lemma~\ref{lem:TIbeta}, it is straightforward to see that the first and second moments of $X$ and $Y$ both match the moments of the beta distribution. 

\begin{conjecture}
For the stationary distribution of the two-island diffusion, let $(X,Y) \sim \TI(a_1, a_2, \gamma b_1, \gamma b_2, c, \gamma c, 2,2\gamma)$. Then as $c \to \infty$, both $X$ and $Y$ converge in distribution to the Beta distribution with parameters $(a_1 + b_1, a_2 + b_2)$. Analogously for the stationary distribution of the seed-bank diffusion, let $(X,Y) \sim \TI(a_1, a_2, 0, 0, c, \gamma c, 2, 0)$. Then as $c \to \infty$, both $X$ and $Y$ converge in distribution to the Beta distribution with parameters $(a_1, a_2)$.
\end{conjecture}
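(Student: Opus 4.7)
The plan is to use the Stein characterization of the Beta distribution: $Z \sim \Beta(\alpha_1, \alpha_2)$ iff $\IE\bigl[(\alpha_1 - (\alpha_1 + \alpha_2) Z) g'(Z) + Z(1-Z) g''(Z)\bigr] = 0$ for a sufficiently rich class of smooth $g$, and to identify exactly such a limiting equation for a well-chosen convex combination $W := \lambda X + (1-\lambda) Y$. By Lemma~\ref{lem:TIbeta} we already have $X - Y \to 0$ in probability; applying $\cA$ to $f(x,y)=(x-y)^2$ at stationarity upgrades this to $\IE(X-Y)^2 = \bigo(1/c)$, so $\IE(X-W)^2$ and $\IE(Y-W)^2$ both tend to $0$ at the same rate and any weak limit of $W$ is automatically the common weak limit of $X$ and $Y$.

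The crucial choice is $\lambda = \gamma/(1+\gamma)$, engineered so that when $\cA$ is applied to $f(x,y) = g(W)$ the migration drifts $c(y-x) f_x$ and $\gamma c(x-y) f_y$ cancel exactly. For the two-island diffusion a direct computation gives
\begin{equation*}
\cA f = \tfrac{\gamma}{1+\gamma}\bigl[(a_1+b_1) - (a_1+a_2)x - (b_1+b_2)y\bigr] g'(W) + \tfrac{\gamma}{(1+\gamma)^2}\bigl[\gamma x(1-x) + y(1-y)\bigr] g''(W).
\end{equation*}
Taking stationary expectation annihilates the left-hand side, and the $L^2$ convergence of $X,Y$ to $W$ together with smoothness of $g$ and $g'$ lets me replace $x$ and $y$ by $W$ in each coefficient; the diffusion bracket collapses to $(\gamma+1)W(1-W)$, and dividing through by $\gamma/(1+\gamma)$ produces precisely the Stein equation for $\Beta(a_1+b_1,a_2+b_2)$. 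Since polynomial test functions determine a probability measure on $[0,1]$, this identifies the weak limit of $W$, and hence of both $X$ and $Y$, as the conjectured Beta. The limit interchanges in this argument can be quantified using the explicit Stein apparatus from Theorem~\ref{thm:fcts}, upgrading the soft convergence to a rate of order $\bigo(c^{-1/2})$.

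The main obstacle is the seed-bank case, where $\beta = 0$ leaves only the $\lambda^2 X(1-X) g''(W)$ diffusion piece after the cancellation, so the relative factors of $\lambda$ on drift versus diffusion do not match those of $\Beta(a_1,a_2)$. A complementary route uses the pure test functions $f(x,y) = g(y)$ and $f(x,y) = g(x)$ together with the identity $\IE[(X-Y) g'(Y)] = 0$ (a direct consequence of $\cA g(y) = \gamma c(x-y) g'(y)$) and a refined conditional-second-moment estimate $c\,\IE[(X-Y)^2 \mid X = w] \to w(1-w)/(1+\gamma)$, which I would obtain by applying $\cA$ to $f(x,y) = (x-y)^2 \phi(x)$ for arbitrary smooth $\phi$ and letting $c \to \infty$. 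Both routes appear to lead instead to the Stein equation for $\Beta\bigl(a_1(1+\gamma)/\gamma,\ a_2(1+\gamma)/\gamma\bigr)$, consistent with a direct moment computation (e.g.\ at $a_1 = a_2 = \gamma = 1$ the second-moment equations force $\lim \IE X^2 = 3/10$ rather than the value $1/3$ required by $\Beta(1,1)$); hence reconciling these calculations with the conjecture as stated -- whether by pinning down a compensating higher-order contribution or by reformulating the seed-bank half of the statement with corrected Beta parameters -- is the step I expect to occupy most of the effort.
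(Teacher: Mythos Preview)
The statement you are addressing is a \emph{conjecture}; the paper does not supply a proof, and indeed only remarks that the first two moments of $X$ and $Y$ match those of the proposed Beta limits. So there is no paper argument to compare against, and your proposal goes well beyond what the authors establish.

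Your treatment of the two-island half is correct and essentially complete. The cancellation choice $\lambda=\gamma/(1+\gamma)$ is exactly right: with $f(x,y)=g(\lambda x+(1-\lambda)y)$ the migration drifts cancel, the mutation drifts combine to $\lambda[(a_1+b_1)-(a_1+a_2)x-(b_1+b_2)y]g'(W)$, and the diffusion terms collapse (after replacing $x,y$ by $W$) to $\lambda\,W(1-W)g''(W)$. Together with Lemma~\ref{lem:TIbeta} and the $\bigo(1/c)$ bound on $\IE(X-Y)^2$ obtained from $\cA$ applied to $(x-y)^2$, this pins down the limit of $W$, and hence of $X$ and $Y$, as $\Beta(a_1+b_1,a_2+b_2)$. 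The moment method on $[0,1]$ closes the argument.

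Your diagnosis of the seed-bank half is also correct, and it is the most valuable part of the proposal: the conjecture as stated is \emph{false}. Your second-moment check at $a_1=a_2=\gamma=1$ is decisive --- solving the stationary moment equations from~\eq{eq:gen} gives $\lim_{c\to\infty}\IE X^2=3/10$, which is the second moment of $\Beta(2,2)$, not of $\Beta(1,1)$. More generally, your Stein computation shows the correct limit is $\Beta\bigl(a_1(1+\gamma)/\gamma,\ a_2(1+\gamma)/\gamma\bigr)$. This correction is in fact consistent with the paper's own Theorem~\ref{thm:SBB}: there the TI parameters satisfy $\gamma=c_2/c_1=N/M$ and $a_i^{\TI}\approx 2Np_i$, so $a_i^{\TI}(1+\gamma)/\gamma=2(N+M)p_i$, exactly the Beta parameters appearing in that theorem. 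So rather than searching for a ``compensating higher-order contribution'', you should simply state and prove the corrected version; your $W$-based argument already does this once $\beta=0$ is handled by the same cancellation.
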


\section{Proofs of main results}

The primary tool used in this paper to prove the main results is Stein's method. Stein's method is a powerful tool in probability theory that is used to derive an \emph{explicit bound} for the difference between two probability distributions. Typically one aims to use it to find an upper bound on the errors incurred when approximating an intractable \emph{target} distribution with a commonly used simple \emph{reference} distribution. It was first developed for the Normal distribution in~\cite{stein72} to bound the approximation errors when applying the central limit theorem, and it has since been developed numerous distributions, such as Poisson~\cite{Chen75,BHJ}, beta~\cite{GR13, Dobler2015}, Dirichlet~\cite{GRR17}, negative binomial~\cite{BP99}, exponential~\cite{FulmanRoss2013} to just name a few. For many more examples and applications, see for example the surveys or monographs~\cite{Ross11, Chatterjee2014,introstein,LRS2017}. We first briefly introduce Stein's method and outline the main steps for the proofs of the main theorems.

\subsection{Introduction to Stein's method}\label{stnintro}
To successfully apply Stein's method, there are typically three main steps which we outline and give a brief description of our particular approach below. Our goal is to bound the difference between the typically unknown or intractable law of our target random variable $W$ with the law of a well understood and simple reference random variable $Z$.
\begin{enumerate}
\item Identify a characterising operator $\mathcal A$ or identity that is satisfied by only the reference distribution $\pi$. In this paper the characterising operators are the Wright-Fisher diffusion generator and the two-island diffusion generator. The generator characterises its associated stationary distribution through the identity that $\E \mathcal Af(Z) = 0$ for all suitable functions $f$ when $Z$ follows the stationary distribution.
\item For any arbitrary function $h$, solve for the function $f_h$ that satisfies
\ba{
\mathcal A f_h(x) = h(x) - \E h(Z).
}
Then by setting $x = W$ and taking expectations,
\ban{
\abs { \E \mathcal A f_h(W)} = \abs{ \E h(W) - \E h(Z)}.\label{eq:stneqa}
}
Properties of the function $f_h$ turn out to be crucial to derive a good bound with Stein's method. Typically one will require good bounds on $f_h$ and its derivatives. Much of this paper is dedicated to this process.
\item The final step is to bound~\eq{eq:stneqa} for all $h$ from a rich enough family of test functions $\mathcal H$, for example $\mathcal H$ could be the family of all polynomials, in which case would yield a bound on the differences in all moments for the two distributions. Rather than directly bounding $\abs{ \E h(W) - \E h(Z)}$, using~\eq{eq:stneqa} we instead derive the bound for $\abs { \E \mathcal A f_h(W)}$ which is more tractable. Our technique of choice in this paper is to use what is known in the Stein's method literature as the `exchangeable pairs' approach. 
\end{enumerate}

For Theorems~\ref{thm:Beta} and~\ref{thm:SBB}, steps 1 and 2 have been established in~\cite{GRR17} and we only need to complete the third step. However for the TI distribution, steps 1 and 2 have not been established in the literature, so we first derive its general Stein's method framework in this paper.

\subsection{Stein's method for the TI distribution}
In this section, we focus on the first two of the three steps outlined in Section~\ref{stnintro}, and leave the final step for the appendix. The first step for Stein's method for the stationary moments of the two-island diffusion is to define our characterising operator. We follow the generator approach of~\cite{B88, Barbour1990, Gotze1991} and our operator is therefore the generator of the two-island diffusion~\eq{eq:gen}.
\begin{lemma}
Given non-negative constants $a_1, a_2, b_1, b_2, c_1, c_2, \alpha, \beta$, let $(X,Y)$ be a random vector defined on $[0,1]^2$. Then $(X,Y) \sim \TI$ if and only if for all functions $f \in \mathrm{C}^2[0,1]^2$, 
\ba{
\E \cA f(X,Y) = 0,
}
where $\cA f(x,y)$ is defined as in~\eq{eq:gen}.
\end{lemma}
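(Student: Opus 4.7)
The forward ("only if") implication is the standard infinitesimal characterisation of invariance. If $(X,Y)\sim \TI$ and $(P_t)_{t\ge 0}$ denotes the semigroup of the two-island diffusion, then $\mathrm{C}^2[0,1]^2$ lies in the domain of $\cA$, stationarity yields $\E f(X_t,Y_t)=\E f(X_0,Y_0)$ for every $f\in \mathrm{C}^2[0,1]^2$, and differentiating at $t=0$ gives $\E\cA f(X,Y)=0$.

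For the converse, I would exploit that $[0,1]^2$ is compact, so that any probability law on it is uniquely determined by its moments $m_{n,m}:=\E[X^n Y^m]$; it is therefore enough to show that the relations $\E\cA f(X,Y)=0$, restricted to the polynomial test functions $f(x,y)=x^n y^m$, pin down every $m_{n,m}$ from the normalisation $m_{0,0}=1$. A direct expansion of $\cA(x^n y^m)$ followed by taking expectations yields
\begin{equation*}
\lambda_{n,m}\,m_{n,m} \;=\; c_1 n\,m_{n-1,m+1}+c_2 m\,m_{n+1,m-1}+R_{n,m},
\end{equation*}
where $\lambda_{n,m}:=(a_1+a_2+c_1)n+(b_1+b_2+c_2)m+\tfrac{\alpha}{2}n(n-1)+\tfrac{\beta}{2}m(m-1)$ and $R_{n,m}$ is a linear combination of moments of strictly smaller total degree.

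I would then induct on $k:=n+m$. For fixed $k\ge 1$, the $k+1$ equations indexed by $n+m=k$ form a tridiagonal linear system in the $k+1$ unknowns $v_j:=m_{j,k-j}$, with the right-hand side known by the inductive hypothesis. The main obstacle is to verify that this tridiagonal block is invertible. The diagonal entries $\lambda_{n,m}$ are strictly positive for $k\ge 1$ while the off-diagonal entries $c_1 n$ and $c_2 m$ are non-negative, and the row sums reduce to the drift-plus-diffusion contributions $(a_1+a_2)n+(b_1+b_2)m+\tfrac{\alpha}{2}n(n-1)+\tfrac{\beta}{2}m(m-1)$, which are non-negative with strict positivity in at least one row under the standing non-degeneracy assumptions on $\TI$. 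Weak diagonal dominance combined with irreducibility (available whenever $c_1,c_2>0$) then rules out non-trivial kernel elements. Equivalently, and perhaps cleaner, one may simply invoke uniqueness of the invariant measure of the two-island diffusion on the compact square $[0,1]^2$: any two probability measures solving the full moment system would furnish two distinct invariant measures. Either route shows that the moments are uniquely determined, and hence $(X,Y)\sim\TI$.
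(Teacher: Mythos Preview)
Your argument is correct and follows the same route as the paper: the forward direction is the standard generator characterisation of stationarity, and for the converse the paper likewise appeals to polynomial test functions to pin down all mixed moments (which determine the law on the compact square). You supply considerably more detail than the paper---in particular the tridiagonal recursion on total degree and the diagonal-dominance check---but the underlying idea is identical; your brief ``alternative'' via uniqueness of the invariant measure would additionally need an Echeverr\'ia-type result to pass from $\E\cA f=0$ to invariance, whereas the moment-induction route you give first is self-contained.
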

\begin{proof}
\cite[Proposition~2.6]{Blathetal2019} show that the two-island diffusion model has a unique stationary distribution so the forwards implication follows. The reverse implication can be shown by taking polynomial test functions to yield the moments.
\end{proof}

Recall the next step in Stein's method is for any arbitrary function $h$, to solve for the function $f_h$ that satisfies
\ban{
\cA f_h(x,y) = h(x,y) - \IE h(Z_1,Z_2).\label{eq:steineq}
}
It then follows that
\ba{
\abs{ \E h(X,Y) - \E h(Z_1, Z_2)} = \abs{\E \cA f_h(X,Y)},
}
and our aim is now to bound the right hand side of the above equation $\abs{ \E \cA f_h(X,Y)}$. Using the generator approach of~\cite{Barbour1990}, we will show that the solution $f_h$ to the equation~\eq{eq:steineq} is
\ban{
f(x,y) := f_h(x,y) = -\int_0^\infty [\E h((X,Y)_{(x,y)}(t)) - \E h(Z_1, Z_2)] dt,\label{eq:steinsol}
}
where $(X,Y)_{(x,y)}(t)$ is a diffusion process governed by generator~\eq{eq:gen} and starting value $(X,Y)_{(x,y)}(0) = (x,y)$. 
To successfully apply Stein's method, good bounds on the supremum norms of derivatives of~\eq{eq:steinsol}, known in the Stein's method literature as Stein factors, are essential. In particular, for our application we require bounds on the first three derivatives.

For a function $f: [0,1]^2 \to \IR$, we denote the supremum norm $\| f \|_\infty = \sup_{(x,y) \in [0,1]^2} \abs{f(x,y)}$, and the supremum bounds on the derivatives of $f$,
\ba{
|f|_x &= \left\| \frac{\partial}{\partial x}f(x,y) \right\|_\infty,\ |f|_y = \left\| \frac{\partial}{\partial y}f(x,y) \right\|_\infty.
}
Analogous notation for higher order derivatives, for example $|f|_{xx}$ will also be used. 
\begin{theorem}\label{thm:fcts}
For any test function of the form $h(x,y) = x^{n}y^{m}$ where $n, m$ are non-negative integers and set $a = a_2 + a_2$, $b = b_1 = b_2$. Then the function $f$ defined in~\eq{eq:steinsol} is the solution to~\eq{eq:steineq}, is well defined and the following bounds hold.
\ba{
|f|_{x} &\leq D_x = \frac{n(b+c_2) + mc_2}{ab + ac_2 + bc_1},\\
|f|_{y} &\leq D_y = \frac{m(a+c_1) + nc_1}{ab + ac_2 + bc_1},\\
|f|_{xy} &\leq D_y = \frac{m(a+c_1) + nc_1}{ab + ac_2 + bc_1},\\
|f|_{xx} &\leq D_{xx} = \frac{n(n-1)[b(a+b+ c_1 + c_2) + c_2(a+b + c_2)] + m(m-1)c_2^2 + 2mnc_2(b+c_2)}{2(a + b + c_1 + c_2)(ab + ac_2 + bc_1)},\\
|f|_{yy} &\leq D_{yy} =\frac{m(m-1)[a(a+b+ c_1 + c_2) + c_1(a+b + c_1)] + n(n-1)c_1^2 + 2mnc_1(a+c_1)}{2(a + b + c_1 + c_2)(ab + ac_2 + bc_1)},\\
|f|_{xy} &\leq D_{xy} = \frac{n(n-1)c_1(b+c_2) + m(m-1)c_2(a+c_1) + 2mn(a+c_1)(b+c_2)}{2(a + b + c_1 + c_2) (ab + ac_2 + bc_1)},\\
|f|_{xxx} &\leq D_{xxx} = \Big\{ n(n-1)(n-2) \big[b[ 2(a + b + c_1 + c_2)^2 + ab + bc_1 + c_1c_2] + c_2[ 2(a + b + c_2)^2\\
	&\hspace{2cm}+ a(2b + 2c_1 + c_2)]\big]  + 3mn(n-1) c_2(ab + ac_2 + bc_1 + 2(b+c_2)^2)\\
	&\hspace{2cm} + 6nm(m-1)c_2^2(b+c_2) + 2m(m-1)(m-2)c_2^3\Big\} \\
	&\hspace{2cm}\Big/\Big\{ 3 (ab + ac_2 + bc_1) (2(a + b + c_1 + c_2)^2 + ab + ac_2 + bc_1)\Big\},\\
|f|_{xxy} &\leq D_{xxy} = \Big\{ n(n-1)(n-2) c_1(ab + ac_2 + bc_1 + 2(b+c_2)^2) + 4mn(n-1) c_1c_2(b+c_2) \\
	&\hspace{2cm} + 2nm(m-1)c_1c_2^2 + mn(n-1) \big[ 3 b (a + c_1) (a + 2 b + c_1) \\
	&\hspace{2cm}+(3 a^2 + 8 b c_1 + 3 a (4 b + c_1)) c_2 + 2 (3 a + c_1) c_2^2\big] \\
	&\hspace{2cm}+ 2nm(m-1)c_2(3a(b+c_2) + c_1(3b+2c_2)) + 2m(m-1)(m-2)(a+c_1)c_2^2 \Big\}\\
	&\hspace{2cm} \Big/\Big\{ 3 (ab + ac_2 + bc_1) (2(a + b + c_1 + c_2)^2 + ab + ac_2 + bc_1)\Big\}\\
|f|_{xyy} &\leq D_{xyy} = \Big\{ m(m-1)(m-2) c_2(ab + ac_2 + bc_1 + 2(a+c_1)^2) + 4nm(m-1) c_1c_2(a+c_1) \\
	&\hspace{2cm} + 2mn(n-1)c_1c_2^2 + nm(m-1) \big[ 3 a (b + c_2) (b + 2 a + c_2) \\
	&\hspace{2cm}+(3 b^2 + 8 a c_2 + 3 b (4 a + c_2)) c_1 + 2 (3 b + c_2) c_1^2\big] \\
	&\hspace{2cm}+ 2mn(n-1)c_1(3b(a+c_1) + c_2(3a+2c_1)) + 2n(n-1)(n-2)(b+c_2)c_1^2 \Big\}\\
	&\hspace{2cm} \Big/\Big\{ 3 (ab + ac_2 + bc_1) (2(a + b + c_1 + c_2)^2 + ab + ac_2 + bc_1)\Big\}\\
|f|_{yyy} &\leq D_{yyy} = \Big\{ m(m-1)(m-2) \big[a[ 2(a + b + c_1 + c_2)^2 + ab + ac_2 + c_1c_2] + c_1[ 2(a + b + c_1)^2\\
	&\hspace{2cm}+ b(2a + 2c_2 + c_1)]\big]  + 3nm(m-1) c_1(ab + ac_2 + bc_1 + 2(a+c_1)^2)\\
	&\hspace{2cm} + 6mn(n-1)c_1^2(a+c_1) + 2n(n-1)(n-2)c_1^3\Big\} \\
	&\hspace{2cm}\Big/\Big\{ 3 (ab + ac_2 + bc_1) (2(a + b + c_1 + c_2)^2 + ab + ac_2 + bc_1)\Big\}.
}
\end{theorem}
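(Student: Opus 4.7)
The plan is to exploit the fact that the generator $\cA$ preserves the finite-dimensional space $V_k$ of polynomials of total degree at most $k$; for polynomial test functions $h(x,y) = x^n y^m$, the Stein equation thus reduces to a linear algebraic equation on $V_k$ with $k = n+m$. I would first verify that $f$ in~\eqref{eq:steinsol} is well defined and solves the Stein equation. Setting $u(t,x,y) := \E h((X,Y)_{(x,y)}(t))$, the backward Kolmogorov equation gives $\partial_t u = \cA u$ with $u(0,\cdot) = h$, so $u(t,\cdot) = e^{t\cA_k}h$ where $\cA_k := \cA|_{V_k}$. Uniqueness of the stationary distribution~\cite[Proposition~2.6]{Blathetal2019} forces $\cA_k$ to have $0$ as a simple eigenvalue with the constants as eigenspace, and all other eigenvalues strictly negative in real part, so $\|u(t,\cdot) - \E h(Z_1, Z_2)\|_\infty$ decays exponentially in $t$ and the integral converges absolutely. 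The standard generator-approach identity $\cA f = -\int_0^\infty \partial_t u\,dt = h - \E h(Z_1, Z_2)$ then verifies~\eqref{eq:steineq}.

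To bound the first derivatives, I would analyze the first-moment subsystem: $(\E[X(t)|x,y],\E[Y(t)|x,y])$ satisfies a 2-dimensional linear ODE with coefficient matrix
\ba{
L = \begin{pmatrix} a + c_1 & -c_1 \\ -c_2 & b + c_2 \end{pmatrix},\qquad \det L = ab + ac_2 + bc_1,
}
precisely the universal denominator of Theorem~\ref{thm:fcts}. Since $\int_0^\infty e^{-tL}\,dt = L^{-1}$, the case $n=1, m=0$ immediately gives $|f|_x \leq (b+c_2)/\det L$ and $|f|_y \leq c_1/\det L$, consistent with $D_x, D_y$. For general polynomial $h$, the cleanest argument is via the moment duality of the two-island Wright-Fisher diffusion with a structured coalescent-with-migration started from $n$ lineages on island 1 and $m$ on island 2: $\partial_x u(t,x,y)$ becomes the expected count of island-1 lineages, whose decay is governed by $e^{-tL}$ applied separately to each lineage. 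Summing $n$ copies of the ``island-1'' initial condition and $m$ copies of the ``island-2'' initial condition yields exactly $[n(b+c_2) + mc_2]/\det L = D_x$, and similarly for $D_y$.

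For the second- and third-derivative bounds, I would iterate this analysis on higher homogeneous-degree blocks of $\cA$. The degree-2 block is governed by a $3 \times 3$ matrix whose characteristic factor is $(a+b+c_1+c_2)$, producing the extra factor in the denominators of $D_{xx}, D_{xy}, D_{yy}$; the numerators arise as appropriate cofactors multiplied by the combinatorial prefactors $n(n-1)$, $2nm$, $m(m-1)$ that appear when applying $\partial^2$ to $x^n y^m$. The third-derivative bounds follow from the degree-3 block, whose characteristic factor $2(a+b+c_1+c_2)^2 + ab+ac_2+bc_1$ matches the denominators of $D_{xxx}, \ldots, D_{yyy}$, with prefactors $n(n-1)(n-2), \ldots, m(m-1)(m-2)$ accounting for the $\partial^3$ action on the monomial.

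The main obstacle will be the algebraic bookkeeping: explicit inversion of the $3 \times 3$ and $4 \times 4$ blocks of $\cA$, and the careful enumeration of the combinatorial prefactors for each derivative, together with tracking how lower-order correction terms (arising from the non-commutativity of $\partial_x, \partial_y$ with $\cA$) get absorbed into the stated bounds without introducing spurious $\alpha, \beta$ dependence in the denominators. The dual-coalescent interpretation renders these as expectations over marked-lineage processes and makes the structure transparent, but the extraction of the clean closed-form bounds in Theorem~\ref{thm:fcts} still requires sustained algebraic care.
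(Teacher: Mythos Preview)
Your overall strategy---moment duality plus linear-ODE analysis of the lineage-count process---is close to the paper's, and your identification of $L$ with $\det L = ab + ac_2 + bc_1$ for the first-derivative bounds is exactly right. However, there is a genuine gap in the passage from first to higher derivatives.

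The true dual process (the structured coalescent you invoke) has \emph{interacting} lineages: pairs on island~1 coalesce at rate~$\alpha$ and pairs on island~2 at rate~$\beta$. Consequently the degree-2 and degree-3 blocks of $\cA$ genuinely involve $\alpha,\beta$, and direct inversion of the $3\times 3$ or $4\times 4$ matrices you propose would produce $\alpha,\beta$-dependent constants, not the ones stated in the theorem. Your phrase ``decay governed by $e^{-tL}$ applied separately to each lineage'' is precisely the step that needs justification: it is true only as an \emph{inequality}, not an equality.

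The paper resolves this by replacing the dual with an auxiliary two-urn process in which each ball evolves independently under the single-ball matrix $L$ (death at rate $a$ or $b$, migration at rate $c_1$ or $c_2$), with no coalescence. Because coalescence only accelerates the loss of lineages, the urn count $N_{n,m}(t)$ stochastically dominates the true island-1 exponent $N^*_{n,m}(t)$ of the dual. This domination is what simultaneously (i) eliminates $\alpha,\beta$ from the bounds and (ii) makes the ball count a sum of i.i.d.\ indicators, so that the $k$th factorial moment of $N_{n,m}(t)$ factors into products of $\E N_{1,0}(t)$ and $\E N_{0,1}(t)$. The second- and third-order denominators $2(a+b+c_1+c_2)$ and $2(a+b+c_1+c_2)^2 + ab+ac_2+bc_1$ then arise from integrals of the form $\int_0^\infty [\E N_{1,0}(t)]^k\,dt$, computed from the explicit $2\times 2$ solution, \emph{not} from characteristic polynomials of higher-degree blocks of $\cA$. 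Without this stochastic-domination step your plan does not reach the stated $\alpha,\beta$-free constants; you flag the $\alpha,\beta$ issue as an obstacle but do not supply the mechanism that removes it.
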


Before we prove Theorem~\ref{thm:fcts}, we describe a key component of the proofs, moment duality of the two-island diffusion model. For a more detailed discussion of this duality see~\cite{Blathetal2019}. For $f(x,y) = x^n y^{m}$, where $n, m \in \IZ_+$, then~\eq{eq:gen} takes the form
\ba{
\cA f(x,y) &= \frac\alpha2 x(1-x) n(n-1) x^{n-2}y^m + [ a_1 - (a_1+a_2) x + c_1(y-x) ]nx^{n-1}y^{m}\\
	&\ \ + \frac\beta2 y(1-y) m(m-1) x^{n}y^{m-2} + [ b_1 - (b_1+b_2) y + c_2(x-y) ]mx^{n}y^{m-1}\\
	&= \Big(\frac\alpha2n(n-1)+ na_1\Big) \Big[ x^{n-1} y^m - x^ny^m\Big] + na_2 \Big[0 - x^ny^m\Big] + nc_1\Big[x^{n-1}y^{m+1} - x^ny^m\Big]\\
	&\ \ +\Big(\frac\beta2m(m-1) + mb_1\Big) \Big[ x^{n} y^{m-1} - x^ny^m\Big] + mb_2\Big[0 - x^ny^m\Big] + mc_2\Big[x^{n+1}y^{m-1} - x^ny^m \Big].
}
We define our dual process on the space of polynomial functions. We could equivalently define the process on tuples $(n,m)$ as in~\cite[Definition~2.1]{Blathetal2019}, but in our case it is more natural to work in the space of polynomial functions. Given the function is in `state' $x^ny^m$ it transitions to the following with rates as described:
\begin{itemize}
\item $x^{n-1}y^m$ with rate $\frac\alpha2 n(n-1) + na_1$,
\item $x^{n-1}y^{m+1}$ with rate $c_1n$,
\item $x^ny^{m-1}$ with rate $\frac\beta2 m(m-1) + mb_1$,
\item $x^{n+1}y^{m-1}$ with rate $c_2m$,
\item $0$ with rate $na_2 + mb_2$.
\end{itemize}
Let $Q^{x,y}_{n,m}(t)$ be the process governed by the rates as above and with initial value $Q^{x,y}_{n,m}(0) = x^ny^m$. 
\begin{lemma}\label{lem:dual}
For functions $h$ of the form $h(x,y) = x^ny^m$,
\ba{
\E h((X,Y)_{x,y}(t)) = \E Q^{x,y}_{n,m}(t).
}
\end{lemma}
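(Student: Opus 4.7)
My strategy is to establish the identity by showing that both sides satisfy the same finite-dimensional linear ODE with the same initial condition, and then invoke uniqueness. I would set $u(t,x,y) := \E h((X,Y)_{x,y}(t))$ and $v(t,x,y) := \E Q^{x,y}_{n,m}(t)$, both with $h(x,y) = x^n y^m$; clearly $u(0,x,y) = v(0,x,y) = x^n y^m$. The first task is to identify a common invariant subspace.

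Inspecting the expansion of $\cA[x^n y^m]$ carried out just before Lemma~\ref{lem:dual}, one observes that the result is a linear combination of monomials $x^{n'}y^{m'}$ with $n' + m' \le n + m$, since the only exponent-shifts that could raise a single exponent, namely $(n,m)\mapsto(n-1,m+1)$ and $(n,m)\mapsto(n+1,m-1)$, preserve the sum. Hence $\cA$ restricts to a linear endomorphism of the finite-dimensional space $\cP$ of polynomials in $(x,y)$ of total degree at most $n+m$. On the dual side, the listed transitions either preserve $n+m$ (the two $c_i$ transitions) or strictly decrease it (the $a_i$, $b_i$, $\alpha$, $\beta$ transitions and the killing transition to $0$), so $Q^{x,y}_{n,m}(t)$ takes values almost surely in the finite set of monomials spanning $\cP$ for every $t$.

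With this in hand, the backwards Kolmogorov equation for the diffusion applied to the polynomial $h$ yields $\partial_t u = \cA u$, and the forward Kolmogorov equation for the dual continuous-time Markov chain on its finite reachable state space yields $\partial_t v(t,x,y) = \cG v(t,x,y)$, where $\cG$ is the linear operator on $\cP$ defined by sending $x^{n'}y^{m'}$ to the sum over dual transitions of $(\text{rate}) \cdot (\text{target monomial} - x^{n'}y^{m'})$. The crucial step is to verify $\cG[x^{n'}y^{m'}] = \cA[x^{n'}y^{m'}]$ as elements of $\cP$ for every monomial in the space. This is a term-by-term matching: each of the five bracketed differences in the expansion of $\cA[x^{n'}y^{m'}]$ displayed immediately before the lemma is paired with exactly one of the five bullets in the definition of the dual, and by construction the rates coincide.

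Given this matching, $u$ and $v$ both solve the same linear ODE $\dot w = \cA w$ in the finite-dimensional space $\cP$ with the same initial datum $x^n y^m$, so uniqueness of solutions to linear ODEs forces $u \equiv v$, which is the claim. The only real obstacle is the bookkeeping in the rate-matching step: one needs to keep careful track of the coefficient of $x^{n'}y^{m'}$ itself, which receives a negative contribution from every dual transition (including the killing transition to $0$, which contributes $n'a_2 + m'b_2$) and must exactly reproduce the combined drift-plus-killing coefficient in the diffusion expansion. Since the dual was constructed precisely to make this match, the verification reduces to inspection, but it is worth carrying out explicitly to be rigorous.
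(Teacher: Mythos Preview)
Your argument is correct and is essentially the content of the generator criterion for duality that the paper invokes as a black box. The paper's own proof is a single sentence: it observes that the dual process was constructed precisely so that the generators match on the duality function $F((x,y),(n,m)) = x^n y^m$, and then cites \cite[Proposition~1.2]{JK14} for the conclusion. You instead unpack that criterion directly in this finite-dimensional setting: identify the invariant polynomial subspace $\cP$, check term by term that the diffusion generator and the dual-chain generator agree on monomials (which is exactly the computation the paper displays immediately before the lemma), and conclude via uniqueness for $\dot w = \cA w$. The advantage of your route is that it is self-contained and makes transparent why the specific rates in the dual were chosen; the paper's route is shorter and points the reader to the general theory.

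One small point worth tightening: when you invoke ``uniqueness of solutions to linear ODEs'' in $\cP$, you are implicitly assuming that $u(t,\cdot,\cdot)=T_t h$ already lies in $\cP$ for every $t$, which has not yet been shown. The cleanest fix is to argue in the ambient space: both $u$ and $v$ solve the abstract Cauchy problem $\partial_t w = \cA w$, $w(0)=h$, in $C([0,1]^2)$ with $h\in D(\cA)$, and this problem has the unique solution $T_t h$ by standard semigroup theory (e.g.\ \cite[Proposition~1.5]{EK86}). Alternatively, first solve $\dot w = \cA|_{\cP}\, w$ inside $\cP$ and then identify that solution with $T_t h$ by the same uniqueness. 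Either way the gap is routine to close.
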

\begin{proof}
This follows from the dual process construction and~\cite[Proposition~1.2]{JK14}.
\end{proof}
\begin{proof}[Proof of Theorem~\ref{thm:fcts}]
Using Lemma~\ref{lem:dual},
\ba{
f(x,y) = -\int_0^\infty ]\E Q^{x,y}_{n,m}(t) - \E h (Z_1,Z_2)]dt.
}
The process $Q^{x,y}_{n,m}(t)$ will reach an absorbing state in finite time, and this is straightforward by considering process of the sum of the exponents and noting that it is a non-increasing process. Furthermore~\cite[Propositions 2.4, 2.6]{Blathetal2019} show that the expected value of the process at absorption is exactly $\E h(Z_1,Z_2)$. Hence $f(x,y)$ is well-defined.

Again using Lemma~\ref{lem:dual}, it is straightforward to see that the semi-group generated by $\cA$ is strongly continuous. We then use~\cite[Proposition~1.5(a)]{EK86}, which implies $f^{(u)}(x,y) := -\frac12 \int_0^u[ \E h((X,Y)_{(x,y)}(t)) - \E h (Z_1,Z_2)] dt$ is in the domain of $\cA$ and is the solution to
\ba{
\cA f^{(u)}(x,y) = h(x,y) - \E h((X,Y)_{(x,y)}(u)).
}
Furthermore,~\cite[Corollary 1.6]{EK86} implies that $\cA$ is a closed operator, and as $u \to \infty$, again using~~\cite[Propositions 2.4, 2.6]{Blathetal2019},
\ba{
\sup_{(x,y) \in [0,1]^2}\abs{ \cA f^{(u)}(x,y) - \cA f(x,y) } = \sup_{(x,y) \in [0,1]^2}\abs{\E h(Z_1,Z_2) - \E h( (X,Y)_{x,y}(u) )} \to 0.
}
Hence~\eq{eq:steineq} is satisfied by $f(x,y)$.

For the first derivative,
\ba{
\frac{\partial}{\partial x}f(x,y) &= \lim_{\eps \to 0} \frac1\eps\left[ f(x + \eps, y) - f(x,y)\right]\\
	&= \lim_{\eps \to 0} -\frac1\eps \int_0^\infty \left[\E h((X,Y)_{(x+\eps,y)}(t)) - \E h((X,Y)_{(x,y)}(t))\right] dt.
}
Using the duality of the semi-group,
\ba{
\bbabs{\frac{\partial}{\partial x}f(x,y)} &= \left| \lim_{\eps \to 0} \frac1\eps \int_0^\infty \left[\E h((X,Y)_{(x+\eps,y)}(t)) - \E h((X,Y)_{(x,y)}(t))\right] dt\right|\\
	&\leq  \lim_{\eps \to 0}\frac1\eps \int_0^\infty \E \left| Q^{x+\eps,y}_{n,m}(t) - Q^{x,y}_{n,m}(t) \right| dt.
}
We can couple the transition times of the two processes $Q^{x+\eps,y}_{n,m}(t)$, $Q^{x,y}_{n,m}(t)$ exactly in the obvious manner, and the two processes will become identical when they reach one of the two absorbing states. We first note that
\ba{
\abs{ (x+\eps)^{k_1} y^{k_2} - x^{k_1}y^{k_2} } &= \eps k_1 x^{k_1-1}y^{k_2} +\lito(\e) \\
	&\leq \eps k_1 + \lito(\e).
}
It is now straightforward to see that $\abs{Q^{x+\eps,y}_{n,m}(t) - Q^{x,y}_{n,m}(t)} \leq (n+m)\eps + \lito(\e)$ for all $t \leq \tau_{n,m}$ where $\tau_{n,m}$ is the absorption time of the process. Therefore
\ban{
\int_0^\infty \E \left| Q^{x+\eps,y}_{n,m}(t) - Q^{x,y}_{n,m}(t) \right| dt = \eps \int_0^\infty \E N^*_{n,m}(t) dt+\lito(\e),\label{eq:f1}
}
where $N^*_{n,m}(t)$ is the exponent of $x$ at time $t$ and the starting exponents of $x$ and $y$ are $n$ and $m$ respectively. To derive a bound for the above integral, we define the following auxiliary urn process. Consider two urns, and suppose there are $n$ balls in the first urn and $m$ balls in the second urn. 
\begin{itemize}
\item At rate $(a_1 + a_2)n$, remove a ball from the first urn.
\item At rate $(b_1 + b_2)m$, remove a ball from the second urn.
\item At rate $c_1n$, move a ball from the first to second urn.
\item At rate $c_2m$, move a ball from the second to first urn. 
\end{itemize}
Let $N_{n,m}(t)$ (and $M_{n,m}(t)$) denote the number of balls in the first (second) urn at time $t$ given we start with $n$ ($m$) balls in the first (second) urn. Given the rates at which balls are removed are slower than the rates of the dual process, it is clear we can construct a coupling such that $N^*_{n,m}(t)$ is stochastically dominated by $N_{n,m}(t)$. We begin by focusing on the case $n=1$ and $m=0$ and the partial derivative with respect to $x$. We also simultaneously solve the case $n=0$ and $m=1$. Let $P_{n,m,t}(z_1, z_2) = \E \big(z_1^{N_{n,m}(t)}z_2^{M_{n,m}(t)}\big)$ denote the joint probability generating function for $(N_{n,m}(t), M_{n,m}(t))$ and set $a = a_1 + a_2$ and $b = b_1 + b_2$. Then routine calculations (similar to the derivation of Kolmogorov forwards/backwards equations) yield,
\ba{
\frac{\partial}{\partial t}P_{1,0,t}(z_1,z_2)=a + c_1P_{0,1,t}(z_1,z_2) - (a+c_1)P_{1,0,t}(z_1,z_2).
}
In an analogous calculation,
\ba{
\frac{\partial}{\partial t}P_{0,1,t}(z_1,z_2)=b + c_2P_{1,0,t}(z_1,z_2) - (b+c_2)P_{0,1,t}(z_1,z_2).
}
The explicit solutions to the above pair of differential equations are elementary to compute, however they are very long so we omit them. We note that one could equivalently solve the standard Kolmogorov forwards or backwards equations to calculate the transition probabilities and expectations without any difficulty. Both approaches yield the same final result, but we chose this approach since the joint nature of the probability generating function results in fewer equations. From the properties of probability generating functions, we can explicitly compute
\ba{
\E N_{1,0}(t) &= \frac{\partial}{\partial z_1}P_{1,0,t}(z_1,z_2)\Big|_{z_1=z_2=1},\\
\E N_{0,1}(t) &=  \frac{\partial}{\partial z_1}P_{0,1,t}(z_1,z_2)\Big|_{z_1=z_2=1}.
}
Recalling that our goal is to bound~\eq{eq:f1} (for $n=1, m = 0$),
\ban{
\int_0^\infty \E N_{1,0}(t) dt &= \frac{b+c_2}{ab + ac_2 + bc_1}.\label{eq:f6}
}
And we can now also solve the case where $n=0$ and $m=1$,
\ban{
\int_0^\infty \E N_{0,1}(t) dt &= \frac{c_2}{ab + ac_2 + bc_1}.\label{eq:f7}
}
To solve for general $n$ and $m$, for any time $t$, we can represent $N_{n,m}(t)$ as the number of balls currently in the first urn which originated from the first urn, and the number of balls in the first urn that originated from the second urn. From our construction, we can treat each ball independently. Set $N_{n,m}(t) = N_{n,0}(t) + N_{0,m}(t)$, where $N_{n,0}(t) \sim \Bin(n, \E N_{1,0}(t))$ and $N_{0,m}(t) \sim \Bin(m, \E N_{0,1}(t))$ and $N_{n,0}(t)$ and $N_{0,m}(t)$ are independent. Hence $\E N_{n,m}(t) = n \E N_{0,1}(t) + m \E N_{0,1}(t)$. Also definite analogous variables for $M_{n,m}(t)$. Hence using~\eq{eq:f1},~\eq{eq:f6} and~\eq{eq:f7},
\ba{
\bbabs{\frac{\partial}{\partial x}f(x,y)} &\leq \int_0^\infty \E N_{n,m}(t) dt= \frac{n(b+c_2) + mc_2}{ab + ac_2 + bc_1},\\
\bbabs{\frac{\partial}{\partial y}f(x,y)} &\leq \int_0^\infty \E M_{n,m}(t) dt= \frac{m(a+c_1) + nc_1}{ab + ac_2 + bc_1},
}
where the second equation follows from symmetry.

For the second derivatives, we start with the case of $\frac{\partial^2}{\partial x^2} f(x,y)$. First,
\ba{
\bbabs{ \frac{\partial^2}{\partial x^2} f(x,y)} \leq \lim_{\eps_1,\eps_2 \to 0} \frac{1}{\eps_1\eps_2} \int_0^\infty \E \abs{ Q_{n,m}^{x+\eps_1+\eps_2,y}(t) - Q_{n,m}^{x+\eps_1,y}(t)-Q_{n,m}^{x+\eps_2,y}(t)+Q_{n,m}^{x,y}(t) }dt,
}
and 
\ba{
\abs{ (x+\eps_1 + \eps_2)^{k_1}y^{k_2} - (x+\eps_1)^{k_1}y^{k_2} - (x+\eps_2)^{k_1}y^{k_2} + x^{k_1}y^{k_2} } =  \eps_1\eps_2 k_1(k_1-1)x^{k_1-2}y^{k_2} +\lito(\e_1\e_2).
}
Therefore using factorial moments of the binomial distribution,
\ban{
\bbabs{ \frac{\partial^2}{\partial x^2} f(x,y)} &\leq \int_0^\infty \E [N_{n,m}(t)(N_{n,m}(t) -1)] dt\notag\\
	&\leq \int_0^\infty \E[ (N_{n,0}(t) + N_{0,m}(t))(N_{n,0}(t) + N_{0,m}(t) -1 )] dt\notag\\
	&= n(n-1)\int_0^\infty [\E N_{1,0}(t)]^2 dt + m(m-1) \int_0^\infty [\E N_{0,1}(t)]^2 dt \notag\\
	&\hspace{1cm}+ 2nm \int_0^\infty  [\E N_{0,1}(t)][\E N_{1,0}(t)] dt. \label{eq:f5}
}
Treating the three integrals separately for readability,
\ban{
\int_0^\infty [\E N_{1,0}(t)]^2 dt &= \frac{b(a+b+ c_1 + c_2) + c_2(a+b + c_2)}{ 2(a+b + c_1 + c_2)(ab + ac_2 + bc_1 )}.\label{eq:f2}\\
\int_0^\infty [\E N_{0,1}(t)]^2 dt&=  \frac{c_2^2}{ 2(a+b + c_1 + c_2)(ab + ac_2 + bc_1)}.\label{eq:f3}\\
\int_0^\infty [\E N_{0,1}(t)][\E N_{1,0}(t)] dt &= \frac{c_2(b+c_2)}{2(a+b + c_1 + c_2)(ab + ac_2 + bc_1)}.\label{eq:f4}
}
Combining~\eq{eq:f2},~\eq{eq:f3} and~\eq{eq:f4} in~\eq{eq:f5} yields the final bound.
For the mixed second derivative we begin with noting
\ba{
\abs{ (x+\eps_1)^{k_1}(y+\eps_2)^{k_2} - (x+\eps_1)^{k_1}y^{k_2} - x^{k_1}(y+\eps_2)^{k_2} + x^{k_1}y^{k_2} } =  \eps_1\eps_2 k_1k_2x^{k_1-1}y^{k_2-1} +\lito(\e_1\e_2).
}
Then,
\ban{
\bbabs{\frac{\partial^2}{\partial x \partial y} f(x,y)} & \leq \int_0^\infty \E[ N_{n,m}(t) M_{n,m}(t)] dt\notag\\
	&= \int_0^\infty \E \big[(N_{n,0}(t) + N_{0,m}(t))(M_{n,0}(t) + M_{0,m}(t))\big] dt\notag\\
	&= \int_0^\infty \big[n(n-1)\E N_{1,0}(t) \E M_{1,0}(t) + m(m-1) \E N_{0,1}(t)\E M_{0,1}(t)\notag\\
	&\hspace{1cm} + nm\E N_{1,0}(t) \E M_{0,1}(t) + nm\E N_{0,1}(t) \E M_{1,0}(t) \big] dt,\label{eq:f8}
}
where we have used the fact that $(N_{n,0}(t), M_{n,0}(t))$ is multinomially distributed with probabilities $\E N_{0,1}(t)$ and $\E M_{0,1}(t)$. Computations then yield
\ban{
\int_0^\infty [\E N_{1,0}(t) \E M_{1,0}(t)]dt &=\frac{c_1(b+c_2)}{2(a + b + c_1 + c_2) (ab + ac_2 + bc_1)},\label{eq:f9}
}
and by symmetry
\ban{
\int_0^\infty [\E N_{0,1}(t)M_{0,1}(t)] dt = \frac{c_2(a+c_1)}{2(a + b + c_1 + c_2) (ab + ac_2 + bc_1)}.\label{eq:f10} 
}
And finally using the same formulas as in~\eq{eq:f6},~\eq{eq:f7},
\ban{
\int_0^\infty [  \E N_{1,0}(t) \E M_{0,1}(t) + \E N_{0,1}(t) \E M_{1,0}(t)] dt = \frac{2(a+c_1)(b+c_2)}{2(a + b + c_1 + c_2) (ab + ac_2 + bc_1)}.\label{eq:f11}
}
The bound now follows from using~\eq{eq:f9}, ~\eq{eq:f10} and~\eq{eq:f11} in~\eq{eq:f8}.

Before we begin the bounds for the third derivative, we note the following. There is a minor issue that the domain of $\cA$ is continuous functions with 2 derivatives, so it is not immediate that the third derivative of $f$ exists. However, we can utilise the approach used in~\cite[Lemmas~3.6, 3.7]{GRR17} to show that since our polynomial test functions $h$ have bounded derivatives of all orders, then the third derivative exists. 

As before
\ba{
|(x + \e_1 + &\e_2 + \e_3)^{k_1}y^{k_2} - (x + \e_1 + \e_2)^{k_1}y^{k_2} - (x + \e_1 + \e_3)^{k_1}y^{k_2} - (x + \e_2 + \e_3)^{k_1}y^{k_2} \\
	&+ (x + \e_1)^{k_1}y^{k_2} + (x + \e_2)^{k_1}y^{k_2} + (x + \e_3)^{k_1}y^{k_2} - x^{k_1}y^{k_2} | \\
	&\leq k_1(k_1-1)(k_1-2)\e_1\e_2\e_3 + \lito(\e_1\e_2\e_3).
}
Then
\ba{
\Big| \frac{\partial^3}{\partial x^3}& f(x,y)\Big| \leq \int_0^\infty \E\big[ N_{n,m}(t)(N_{n,m}(t) - 1) (N_{n,m}(t)-2)\big] dt\\
	&= \int_0^\infty \E\big[ (N_{n,0}(t) + N_{0,m}(t)) (N_{n,0}^{(1)}(t) + N_{0,m}^{(2)}(t)-1)(N_{n,0}(t) + N_{0,m}(t)-2)\big] dt\\
		&= n(n-1)(n-2) \int_0^\infty [\E N_{1,0}(t)]^3dt + 3n(n-1)m \int_0^\infty [\E N_{1,0}(t)]^2\E N_{0,1}(t) dt\\
		& \hspace{0.5cm} + 3nm(m-1) \int_0^\infty \E N_{1,0}(t)[\E N_{0,1}(t)]^2 dt + m(m-1)(m-2) \int_0^\infty [\E N_{0,1}(t)]^3dt.
}
We record the following,
\ba{
\int_0^\infty& [\E N_{1,0}(t)]^3 dt \\
	&= \frac{b[ 2(a + b + c_1 + c_2)^2 + ab + bc_1 + c_1c_2] + c_2[ 2(a + b + c_2)^2 + a(2b + 2c_1 + c_2)] }
   {3 (ab + ac_2 + bc_1) (2(a + b + c_1 + c_2)^2 + ab + ac_2 + bc_1)},\\
\int_0^\infty& [\E N_{1,0}(t)]^2\E N_{0,1}(t) dt \\
	&= \frac{c_2 ( ab + ac_2 + bc_1 + 2(b + c_2)^2)}{3 (ab + ac_2 + bc_1) (2(a + b + c_1 + c_2)^2 + ab + ac_2 + bc_1)},\\
\int_0^\infty  &\E N_{1,0}(t) [ \E N_{0,1}(t)]^2 dt\\
	&= \frac{2c_2^2(b+c_2)}{3 (ab + ac_2 + bc_1) (2(a + b + c_1 + c_2)^2 + ab + ac_2 + bc_1)},\\
\int_0^\infty  &[ \E N_{0,1}(t)]^3 dt\\
	&= \frac{2c_2^3}{3 (ab + ac_2 + bc_1) (2(a + b + c_1 + c_2)^2 + ab + ac_2 + bc_1)}.\\
}
The bound corresponding to $D_{xxx}$ now follows, as well as the bound for $D_{yyy}$ by symmetry. For the bound corresponding to $D_{xxy}$, again using
\ba{
\big| (x + \e_1 &+ \e_2)^{k_1}(y+\e_3)^{k_2} - (x + \e_1 + \e_2)^{k_1}y^{k_2} - (x+\e_1)^{k_1}(y+\e_3)^{k_2} + (x+\e_1)^{k_1}y^{k_2}\\
	&- (x+\e_2)^{k_1}(y+\e_3)^{k_2} + (x+\e_2)^{k_1}y^{k_2} + x^{k_1}(y+\e_3)^{k_2} - x^{k_1}y^{k_2}\big|\\
	&\leq k_1(k_1-1)k_2\e_1\e_2\e_3 + \lito(\e_1\e_2\e_3). 
}
Then
\ban{
\Big| &\frac{\partial^3}{\partial x^2\partial y} f(x,y) \Big| \leq \int_0^\infty \E\big[ M_{n,m}(t) N_{n,m}(t) (N_{n,m}(t) - 1) \big] dt\notag\\
	&= \int_0^\infty \E \big[(M_{n,0}(t) + M_{0,m}(t))(N_{n,0}(t) + N_{0,m}(t))(N_{n,0}(t) + N_{0,m}(t)-1 )\big]dt\notag\\
	&= \int_0^\infty \E \Big[ M_{n,0}(t) N_{n,0}(t)(N_{n,0}(t) - 1) + 2 M_{n,0}(t)N_{n,0}(t)N_{0,m}(t) + M_{n,0}(t) N_{0,m}(t)(N_{0,m}(t)-1)\notag\\
	&+ M_{0,m}(t) N_{n,0}(t)(N_{n,0}(t)-1) + 2M_{0,m}(t) N_{n,0}(t) N_{0,m}(t) + M_{0,m}(t) N_{0,m}(t)(N_{0,m}(t) - 1) \Big] dt\notag \\
	&= \int_0^\infty \Big[n(n-1)(n-2)\E M_{1,0}(t) [ \E N_{1,0}(t)]^2 + 2mn(n-1) \E M_{1,0}(t) \E N_{1,0}(t) \E N_{0,1}(t)\notag\\
	&\hspace{0.5cm} + nm(m-1) \E M_{1,0}(t) [\E N_{0,1}(t)]^2 + mn(n-1) \E M_{0,1}(t) [ \E N_{1,0}(t)]^2\notag \\
	&\hspace{0.5cm}+ 2nm(m-1)\E M_{0,1}(t) \E N_{1,0}(t) \E N_{0,1}(t) + m(m-1)(m-2) \E M_{0,1}(t) [\E N_{0,1}(t)]^2\Big]dt .\label{eq:f12}
}
We now record the following.
\ba{
\int_0^\infty[ \E & M_{1,0}(t)[\E N_{1,0}(t)]^2] dt\\
	&= \frac{c_1(ab + ac_2 + bc_1 + 2(b+c_2)^2)}{3 (ab + ac_2 + bc_1) (2(a + b + c_1 + c_2)^2 + ab + ac_2 + bc_1)},\\
\int_0^\infty[ \E& M_{1,0}(t)\E N_{1,0}(t)\E N_{0,1}(t)]dt \\
	&=\frac{2c_1c_2(b+c_2)}{3 (ab + ac_2 + bc_1) (2(a + b + c_1 + c_2)^2 + ab + ac_2 + bc_1)},\\
\int_0^\infty [\E& M_{0,1}(t)[\E N_{0,1}(t)]^2 dt \\
	&=\frac{2c_1c_2^2}{3 (ab + ac_2 + bc_1) (2(a + b + c_1 + c_2)^2 + ab + ac_2 + bc_1)},\\
\int_0^\infty [\E& M_{0,1}(t) [\E N_{1,0}(t)]^2 ]dt\\
	&=\frac{3 b (a + c_1) (a + 2 b + c_1) + (3 a^2 + 8 b c_1 + 3 a (4 b + c_1)) c_2 + 
 2 (3 a + c_1) c_2^2}{3 (ab + ac_2 + bc_1) (2(a + b + c_1 + c_2)^2 + ab + ac_2 + bc_1)},\\
\int_0^\infty [ \E &M_{0,1}(t) \E N_{1,0}(t) \E N_{0,1}(t)] dt\\
	&= \frac{c_2 (3 a (b + c_2) + c_1 (3 b + 2 c_2))}{3 (ab + ac_2 + bc_1) (2(a + b + c_1 + c_2)^2 + ab + ac_2 + bc_1)},\\
\int_0^\infty [ \E& M_{0,1}(t) [\E N_{0,1}(t)]^2] dt\\
	&= \frac{2(a+c_1)c_2^2}{3 (ab + ac_2 + bc_1) (2(a + b + c_1 + c_2)^2 + ab + ac_2 + bc_1)}.
}
The remaining bounds now follow from the above display and symmetry.
\end{proof}
\begin{remark}
A worthwhile comparison of the above bounds are to~\cite[Theorem~5]{GRR17}, which are in some sense Stein factors for the ``one''-island model. For a meaningful comparison, we set $c_1 = c_2=m =0$ and $a= b$. For $|f|_x$, $|f|_{xx}$ and $|f|_{xxx}$, Theorem~\ref{thm:fcts} yields $\frac{n}{a}$, $\frac{n(n-1)}{2a}$  and $\frac{n(n-1)(n-2)}{3a}$ respectively and \cite[Theorem~5]{GRR17} yields $\frac{n}{a}$, $\frac{n(n-1)}{2(a+1)}$ and $\frac{n(n-1)(n-2)}{3(a+2)}$. Hence our bounds are comparably close but not quite as sharp. Ultimately our bounds are not as sharp as our calculations are based upon the simplified urn model rather than the true dual process. Furthermore we have no dependence on $\alpha$ and $\beta$.
\end{remark}
\begin{remark}
In the previous theorem we have chosen to derive bounds that are simple, explicit, and easy to use. The true bounds are certainly sharper, and we outline below an approach that would give sharper bounds, at the cost of significant complexity. This approach combines moment duality and the coupling ideas of~\cite{BX01}. This will ultimately yield a recursive set of equations that will solve for all the Stein solutions. 
We begin with a base case of $h(x,y) = x$. As before
\ba{
\frac{\partial}{\partial x} f_{h(x,y) = x}(x,y) = \lim_{\eps \to 0}-\frac{1}{\eps} \int_0^\infty \E[ Q_{1,0}^{x+\eps,y}(t) - Q_{1,0}^{x,y}(t)] dt.
}
Let $\tau \sim \Exp(a + c_1)$ be the waiting time until the first transition of the processes, then using the strong Markov property,
\ba{
&\frac{\partial}{\partial x} f_{h(x,y) = x}(x,y) \\
&=\lim_{\eps \to 0} -\frac{1}{\eps}\Bigg[ \E \int_0^\tau \eps dt  + \frac{c_1}{a + c_1} \int_\tau^\infty  \E[ Q_{1,0}^{x+\eps,y}(t) - Q_{1,0}^{x,y}(t)| Q_{1,0}^{x+\eps,y}(\tau) = Q_{1,0}^{x,y}(\tau)=y ] dt\Bigg]\\
	&= -\frac{1}{a + c_1} + \frac{c_1}{a + c_1} \frac{\partial}{\partial x} f_{h(x,y) = y}(x,y).
}
Similarly one can show,
\ba{
\frac{\partial}{\partial x} f_{h(x,y)=y} = \frac{c_2}{b+c_2} \frac{\partial}{\partial x} f_{h(x,y) = x}(x,y).
}
Solving the two equations yields
\ba{
\frac{\partial}{\partial x} f_{h(x,y) = x}(x,y) = - \frac{b + c_2}{ab + bc_1 + ac_2}.
}
The higher moment test functions can be recursively solved for in the following manner for example. 
\ba{
\frac{\partial}{\partial x} f_{h(x,y) = x^2}(x,y) &=-\frac{2}{\a + 2a + 2c_1} + \frac{\a + 2a_1}{\a + 2a + 2c_1} \frac{\partial}{\partial x} f_{h(x,y) = x}(x,y)\\
&\hspace{1cm}+ \frac{2c_1}{\a + 2a + 2c_1} \frac{\partial}{\partial x} f_{h(x,y) = xy}(x,y).
}
\end{remark}

Theorems~\ref{thm:WF} and~\ref{thm:SB} are both derived using the following general approximation theorem for the stationary moments of the two-island diffusion model.
\begin{theorem}\label{thm:ex}
Let $(X,Y)$ and $(X',Y')$ be two random vectors on $[0,1]^2$ such that $\mathscr{L}(X,Y) = \mathscr{L}(X',Y')$. Suppose for $\lambda \in \IR$, $R_1$ and $R_2$ are random variables that satisfy
\ban{
\E [ X'-X | (X,Y) ] &= \lambda [(a_1 - (a_1 + a_2))X + c_1(Y-X)] +  R_1,\label{eq:exid}\\
\E [ Y'-Y | (X,Y) ] &= \lambda [(b_1 - (b_1 + b_2))X + c_2(Y-X)] +  R_2.\label{eq:exid1}
}
Then for $h(x,y) = x^ny^m$, and $(Z_1,Z_2)\sim \TI$,
\ba{
\abs{\E h(X,Y) - \E h(Z_1,Z_2)}&\leq D_x A_x + D_y A_y + D_{xx}A_{xx} + D_{yy}A_{yy} + D_{xy}A_{xy} \\
	&\hspace{0.5cm}+ D_{xxx}A_{xxx} + D_{xxy}A_{xxy} + D_{xyy}A_{xyy} + D_{yyy}A_{yyy},
}
where
\ba{
A_x &:= \frac{1}{\lambda} \E | R_1|,\\
A_y &:=  \frac{1}{\lambda} \E |R_2|,\\
A_{xx} &:= \E\Big| \frac\alpha2X(1-X)  - \frac{1}{2\l}\E[(X'-X)^2|(X,Y)] \Big|,\\
A_{yy} &:=  \E\Big| \frac\beta2Y(1-Y)  - \frac{1}{2\l}\E[(Y'-Y)^2|(X,Y)] \Big|,\\
A_{xy} &:= \frac{1}{\l} \E \Big| \E[(X'-X)(Y'-Y)|(X,Y)] \Big|,\\
A_{xxx} &:= \frac{1}{6\l} \E\babs{(X'-X)^3},\\
A_{xxy} &:= \frac{1}{2\l} \E\babs{(X'-X)^2(Y'-Y)},\\
A_{xyy} &:= \frac{1}{2\l} \E\babs{(X'-X)(Y'-Y)^2},\\
A_{yyy} &:= \frac{1}{6\l} \E\babs{(Y'-Y)^3},
}
and $D_x, D_y$, etc.~are defined in Theorem~\ref{thm:fcts}.
\end{theorem}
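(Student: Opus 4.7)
The plan is to combine the Stein equation for the two-island diffusion, already set up in Theorem~\ref{thm:fcts}, with the classical exchangeable-pairs identity $\mathbb{E}[f(X',Y')-f(X,Y)]=0$. By Theorem~\ref{thm:fcts} there exists a solution $f$ of $\cA f = h - \mathbb{E} h(Z_1,Z_2)$ with bounds $|f|_x\le D_x$, $|f|_y\le D_y$, and similarly for all the mixed second and third partials. Consequently,
\be{
\abs{\mathbb{E} h(X,Y) - \mathbb{E} h(Z_1,Z_2)} = \abs{\mathbb{E} \cA f(X,Y)},
}
so the task reduces to bounding the right-hand side using the hypothesized identities~\eq{eq:exid}--\eq{eq:exid1}.

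Next, since $(X,Y) \stackrel{d}{=} (X',Y')$, we have $\mathbb{E} [f(X',Y') - f(X,Y)] = 0$. Taylor expanding $f(X',Y')$ around $(X,Y)$ to third order and taking expectations gives
\bes{
0 &= \mathbb{E}\bigl[f_x \cdot (X'-X) + f_y \cdot (Y'-Y) + \tfrac12 f_{xx}(X'-X)^2 + \tfrac12 f_{yy}(Y'-Y)^2 \\
	&\quad+ f_{xy}(X'-X)(Y'-Y)\bigr] + T_3,
}
where $T_3$ collects the four third-order remainder terms and satisfies $\abs{T_3} \le \tfrac16|f|_{xxx}\mathbb{E}|X'-X|^3 + \tfrac12|f|_{xxy}\mathbb{E}|(X'-X)^2(Y'-Y)| + \tfrac12|f|_{xyy}\mathbb{E}|(X'-X)(Y'-Y)^2| + \tfrac16|f|_{yyy}\mathbb{E}|Y'-Y|^3$ by the usual multinomial expansion. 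Dividing through by $\lambda$ and substituting the exchangeable pair identities~\eq{eq:exid}--\eq{eq:exid1} into $\mathbb{E}[f_x(X'-X)] = \mathbb{E}[f_x \mathbb{E}(X'-X|(X,Y))]$ and its $y$-analogue produces
\bes{
0 &= \mathbb{E}\bigl[f_x(a_1 - (a_1+a_2)X + c_1(Y-X)) + f_y(b_1 - (b_1+b_2)Y + c_2(X-Y))\bigr] \\
	&\quad + \tfrac{1}{\lambda}\mathbb{E}[f_x R_1 + f_y R_2] + \tfrac{1}{2\lambda}\mathbb{E}[f_{xx}(X'-X)^2 + f_{yy}(Y'-Y)^2] \\
	&\quad + \tfrac{1}{\lambda}\mathbb{E}[f_{xy}(X'-X)(Y'-Y)] + \tfrac{1}{\lambda}T_3.
}

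Finally, subtracting this identity from $\mathbb{E}\cA f(X,Y)$ (recall the form~\eq{eq:gen}) cancels the two drift brackets exactly, leaving
\bes{
\mathbb{E}\cA f(X,Y) &= \mathbb{E}\bigl[f_{xx}\bigl(\tfrac\alpha2 X(1-X) - \tfrac{1}{2\lambda}\mathbb{E}[(X'-X)^2|(X,Y)]\bigr)\bigr] \\
	&\quad+ \mathbb{E}\bigl[f_{yy}\bigl(\tfrac\beta2 Y(1-Y) - \tfrac{1}{2\lambda}\mathbb{E}[(Y'-Y)^2|(X,Y)]\bigr)\bigr] \\
	&\quad- \tfrac{1}{\lambda}\mathbb{E}[f_{xy}\,\mathbb{E}[(X'-X)(Y'-Y)|(X,Y)]] - \tfrac{1}{\lambda}\mathbb{E}[f_x R_1 + f_y R_2] - \tfrac{1}{\lambda}T_3.
}
Taking absolute values, pulling them inside the expectations, and using the Stein factor bounds from Theorem~\ref{thm:fcts} term by term yields the claimed inequality with the $A_\bullet$ quantities as defined. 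The only delicate point is ensuring the third-order Taylor remainder is correctly expanded with multinomial weights $1,3,3,1$ so that the displayed constants $\tfrac16,\tfrac12,\tfrac12,\tfrac16$ appear; this is where I expect to have to be careful, but it is essentially bookkeeping since $h$ is polynomial, so $f$ is smooth (in the sense made precise in the remark after Theorem~\ref{thm:fcts}) and all remainder terms can be written as honest integrals against bounded third derivatives.
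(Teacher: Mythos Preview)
Your proposal is correct and follows essentially the same route as the paper's proof: Taylor expand $f(X',Y')$ about $(X,Y)$ to third order, use $\mathscr{L}(X,Y)=\mathscr{L}(X',Y')$ to get $\E[f(X',Y')-f(X,Y)]=0$, insert the conditional drift identities~\eq{eq:exid}--\eq{eq:exid1}, divide by $\lambda$, subtract from $\E\cA f(X,Y)$, and then bound termwise using the Stein factors from Theorem~\ref{thm:fcts}. The only cosmetic difference is that the paper writes the third-order remainder with evaluation points $(x_i^*,y_i^*)$ rather than as an integral remainder, but the resulting multinomial coefficients $\tfrac16,\tfrac12,\tfrac12,\tfrac16$ and the final inequality are identical.
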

\begin{proof}
We begin with the bivariate Taylor expansion of $(x',y')$ around $(x,y)$,
\ba{
f(x',y')& - f(x,y)\\
	& = (x'-x)f_x(x,y) + (y'-y)f_y(x,y) \\
	&+ \frac12(x'-x)^2 f_{xx}(x,y) + \frac12 (y'-y)^2 f_{yy}(x,y) + (x'-x)(y'-y)f_{xy}(x,y)\\
	&+  \frac{1}{6\l} \sum_{i=0}^3{3 \choose i} \E[ (x'-x)^i(y'-y)^{3-i} f_3^{(i)}(x^*_i,y^*_i)],
}
for some collection of constants $\{(x^*_i, y^*_i)\}$ and $f_3^{(i)}$ denotes the third derivative with $i$ partial derivatives with respect to $x$ and $3-i$ with respect to $y$. Substituting in $X$ and $Y$, and taking the expectation of the above, using the assumptions~\eq{eq:exid},~\eq{eq:exid1} and since $(X',Y') \ed (X,Y)$ implies $\E f(X',Y') - \E f(X,Y) = 0$,
\ban{
0 &=  \E\{[a_1 - (a_1 + a_2)X + c_1(Y-X)] f_x(X,Y)\} + \E [b_1 - (b_1 + b_2)X + c_2(Y-X)]f_y(X,Y)\notag  \\
	&\ \ \ + \frac1\l\E R_1 f_x(X,Y) + \frac 1 \l \E R_2 f_y(X,Y)\notag\\
	&+  \frac1{2\l}\E[\E[(X'-X)^2|(X,Y)] f_{xx}(X,Y)] + \frac1{2\l} \E[\E[ (Y'-Y)^2|(X,Y)] f_{yy}(X,Y)]\notag \\
	&\ \ \ + \frac1{\l}\E[\E[(X'-X)(Y'-Y)|(X,Y)]f_{xy}(X,Y)]\notag\\
	&+ \frac{1}{6\l} \sum_{i=0}^3{3 \choose i} \E[ (X'-X)^i(Y'-Y)^{3-i} f_3^{(i)}(x^*_i,y^*_i)]. \label{eq:ex2}
}
Recall that our objective is to bound the following:
\ban{
\E \cA f(X,Y) &= \E \Big\{[ a_1 - (a_1 + a_2)X + c_1(Y-X)] f_x(X,Y) + \frac\alpha2X(1-X) f_{xx}(X,Y)\notag\\
	&\ \ \ +  [ b_1 - (b_1 + b_2)Y + c_2(x-y)] f_y(X,Y) + \frac\beta2Y(1-Y) f_{yy}(X,Y)\Big\}.\label{eq:ex3}
}
The result now follows from subtracting~\eq{eq:ex2} from~\eq{eq:ex3}, taking absolute values and Theorems~\ref{thm:fcts}.
\end{proof}

The proof of Theorem~\ref{thm:ex} uses Stein's method for distributional approximation, first developed for the Normal distribution in~\cite{stein72}, and which as since been developed numerous distributions and applications, see for example the surveys~\cite{Ross11, Chatterjee2014}. The theorem is in the vein of an `exchangeable pairs' approximation theorem, e.g., normal \cite[Theorem~1.1]{RinottRotar1997}; multivariate normal \cite[Theorem~2.3]{ChatterjeeMeckes2008} \cite[Theorem~2.1]{ReinertRollin2009}; 
Poisson~\cite[Proposition~3]{Chatterjeeetal2005}; 
translated Poisson~\cite[Theorem~3.1]{Rollin2007};
exponential \cite[Theorem~1.1]{Chatterjeeetal2011}, \cite[Theorem~1.1]{FulmanRoss2013}; beta \cite[Theorem~4.4]{Dobler2015}; Dirichlet \cite[Theorem~3]{GRR17}, Poisson-Dirichlet and Dirichlet processes~\cite[Theorem~1.3]{GanRoss2019}; limits in Curie-Weiss models \cite[Theorem~1.1]{ChatterjeeShao2011}. Note that following~\cite{Rollin2008}, we have relaxed the exchangeability assumption to only require $(X,Y)$ to be equal in law to $(X',Y')$.

\subsection{Proof of Theorem~\ref{thm:WF}}
Let $(X,Y)$ be distributed as the proportions of type 1 for both islands in stationarity for the two-island Markov chain described in the introduction, and let $(X',Y')$ be one step ahead in Markov chain. We now define some auxiliary variables.

Let $A$ ($D$) and the number of individuals of type $1$ associated with $X'$ ($Y'$) with a parent from the first (second) island. Let $B$ ($C$) be the number of individuals of type $1$ that migrated from the first (second) island of that are of type $1$. With these definitions, $(NX',MY')|(X,Y) = (A + C, B + D)|(X,Y)$. Furthermore,
\ba{
A|(X,Y) &\sim \Bin(N-c, p_1 + (1-p_1 - p_2)X),\\
B|(X,Y) &\sim \Bin(c,p_1 + (1-p_1 - p_2)X),\\
C|(X,Y) &\sim \Bin(c, q_1 + (1-q_1-q_2)Y),\\
D|(X,Y) &\sim \Bin(M-c, q_1 + (1-q_1-q_2)Y).
}
Importantly, the above are all conditionally independent. Hence we now have
\ban{
\E[ X' -X| X,Y] &= \frac1N\E[ A + C - NX | X,Y] \\
	&=\frac{N-c}{N} (p_1 + (1-p_1 - p_2)X) + \frac{c}{N} ( q_1 + (1-q_1 - q_2)Y) - X\notag\\
	&=\frac{N-c}{N} (p_1 - (p_1 + p_2)X) + \frac{c}{N} (Y-X) + \frac{c}{N}(q_1 - (q_1 + q_2)Y).\label{eq:XX}
}
Similarly,
\ban{
\E[ Y' - Y | X,Y] &= \frac{M-c}{M} (q_1 - (q_1-q_2)Y) + \frac{c}{M}(X-Y) + \frac{c}{M}(p_1 - (p_1 + p_2)X).\label{eq:YY}
}
To apply Theorem~\ref{thm:ex}, we proceed to bound the required terms in the following pair of lemmas.
\begin{lemma}\label{lem:2mom}
Let $\lambda = \frac{1}{2N}$, $\alpha = 2$ and $\beta = \frac{2N}{M}$, then
\ban{
\E &\bbbabs{ \frac{\alpha}{2} X(1-X) - \frac{1}{2\lambda} \E[ (X'-X)^2 | X,Y] }\notag\\
	&\leq N(p_1+p_2)^2 +Np_1^2 + (4c+3 + 2Np_1)(p_1+p_2) + (4c+3)p_1+\frac{3c^2+c}{N}\notag\\
	&\hspace{0.5cm} + ((4c+2)p_1+ 2cq_1)(p_1+p_2) + \frac{1}{N} (2c^2(p_1+q_1) + c(p_1+p_2)),\label{ti2moma}\\
\E &\bbbabs{ \frac{\beta}{2} Y(1-Y) - \frac{1}{2\lambda} \E[ (Y'-Y)^2 | X,Y] } \notag\\
	&\leq \frac{N}{M} \Big\{ M(q_1+q_2)^2 +Mq_1^2 + (4c+3 + 2Mq_1)(q_1+q_2) + (4c+3)q_1+\frac{3c^2+c}{M}\notag\\
	&\hspace{0.5cm} + ((4c+2)q_1+ 2cp_1)(q_1+q_2) + \frac{1}{M} (2c^2(p_1+q_1) + c(q_1+q_2))\Big\}.\label{ti2momb}\\
\frac{1}{\lambda}\Big|\E[& (X'-X)(Y'-Y)|X,Y]\Big|\notag\\
	&\hspace{0.5cm} \leq 2N\Big( p_1 + p_2 + \frac{c}{N} (1 + q_1 + q_2) \Big) \Big( q_1 + q_2 + \frac{c}{M} (1 + p_1 + p_2) \Big).\label{ti2momc}
}
\end{lemma}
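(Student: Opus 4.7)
\textbf{Proof plan for Lemma~\ref{lem:2mom}.} The plan is to exploit the conditional independence of $A, B, C, D$ given $(X,Y)$ to compute the conditional second moments of the increments exactly, and then compare them term by term with the target quantities, absorbing residuals into the explicit bounds using $X, Y \in [0,1]$. For the first bound, the starting point is the decomposition
\bes{
\tfrac{1}{2\lambda} \E[(X'-X)^2 \mid X,Y] = N \E[(X'-X)^2\mid X,Y] = \tfrac{1}{N}\Var(NX' \mid X,Y) + N\bklr{\E[X'-X\mid X,Y]}^2.
}
Because $A$ and $C$ are conditionally independent binomials, $\Var(NX'\mid X,Y) = (N-c)\tilde p(1-\tilde p) + c\,\tilde q(1-\tilde q)$, where $\tilde p := p_1 + (1-p_1-p_2)X$ and $\tilde q := q_1 + (1-q_1-q_2)Y$. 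Dividing by $N$ and subtracting $X(1-X) = \frac{\alpha}{2}X(1-X)$, I would rewrite the difference as $(1 - c/N)\bklr{\tilde p(1-\tilde p) - X(1-X)} + (c/N)\bklr{\tilde q(1-\tilde q) - X(1-X)}$. Each bracket is then expanded using $\tilde p - X = p_1 - (p_1+p_2)X$ (and analogously for $\tilde q$) and bounded by triangle inequality using $X,Y \in [0,1]$, producing the $(p_1+p_2)$, $p_1$, and cross $(q_1+q_2)$ terms. The contribution $N(\E[X'-X \mid X,Y])^2$ is controlled using the expression~\eqref{eq:XX} and the inequality $(\sum a_i)^2 \leq \sum a_i \sum a_j$ (or straightforward expansion), yielding the remaining $N(p_1+p_2)^2$, $2c(p_1+p_2)$, $c^2/N$-type terms.

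The second bound~\eqref{ti2momb} follows by a symmetric argument with $M$ in place of $N$ and the roles of $(p_1, p_2)$, $(q_1, q_2)$ swapped; the extra factor $N/M$ appears because $\frac{1}{2\lambda} \cdot \frac{1}{M^2} = \frac{N}{M^2}$ and $\frac{\beta}{2} = \frac{N}{M}$, so factoring $N/M$ out reduces the computation to a direct analogue of the $X$ bound (with $N, p_i$ replaced by $M, q_i$ inside the braces).

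For the cross-term bound~\eqref{ti2momc}, I would use the key observation that full conditional independence of $A, B, C, D$ given $(X,Y)$ gives conditional independence of $X' = (A+C)/N$ and $Y' = (B+D)/M$. Hence $\E[(X'-X)(Y'-Y) \mid X,Y] = \E[X'-X \mid X,Y]\cdot \E[Y'-Y \mid X,Y]$. Each factor is then controlled via~\eqref{eq:XX} and~\eqref{eq:YY}: the triangle inequality, together with $|p_1 - (p_1+p_2)X| \leq p_1 + p_2$ (maximum at the endpoints of $[0,1]$) and $|Y-X| \leq 1$, yields $|\E[X'-X\mid X,Y]| \leq (p_1+p_2) + \tfrac{c}{N}(1+q_1+q_2)$ and similarly for the $Y$ increment, so multiplying by $1/\lambda = 2N$ gives the claimed bound.

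The main obstacle will be bookkeeping rather than mathematical depth: the algebraic expansions of $\tilde p(1-\tilde p) - X(1-X)$ and of $N(\E[X'-X\mid X,Y])^2$ produce a large number of terms of mixed order in $p_i, q_j, c/N$, and care is needed to group them so that the final upper bound matches the stated form exactly, without losing the cross-coupling terms $2cp_1(q_1+q_2)$ and $2c^2(p_1+q_1)/N$ that arise from the $c/N$ contributions in both $\tilde q$ and the mean squared term. Everything else reduces to elementary binomial moment formulas and the trivial bounds $X, Y \in [0,1]$.
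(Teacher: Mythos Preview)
Your approach is correct and, for the cross-term bound~\eqref{ti2momc}, coincides exactly with the paper's argument (factor the conditional covariance via conditional independence of $X'$ and $Y'$, then bound each factor using~\eqref{eq:XX} and~\eqref{eq:YY}). For~\eqref{ti2moma} and~\eqref{ti2momb} your organisation differs from the paper's: the paper expands $\E[(X')^2\mid X,Y]$ directly via factorial moments $A(A-1)+A+C(C-1)+C+2AC$, then forms $\E[(X'-X)^2\mid X,Y]$ and sorts all terms by powers of $X$ (coefficient of $X^2$, of $X$, and constant/residual terms), bounding each block separately. Your decomposition $N\E[(X'-X)^2\mid X,Y]=\tfrac1N\Var(NX'\mid X,Y)+N(\E[X'-X\mid X,Y])^2$ is more conceptual, since it immediately isolates the ``diffusion'' contribution $(1-c/N)\tilde p(1-\tilde p)+(c/N)\tilde q(1-\tilde q)$ that is compared to $X(1-X)$, from the ``squared drift'' contribution that is controlled by~\eqref{eq:XX}. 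Both routes are pure bookkeeping from this point on and produce the same collection of terms; your version makes the structure of the error (variance mismatch plus drift squared) more transparent, while the paper's power-of-$X$ grouping makes it mechanically easier to read off the exact constants in the stated bound.
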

\begin{proof}
We begin with proving~\eq{ti2moma}.
\ban{
\E[ (X')^2 | X,Y] &= \frac{1}{N^2}\E[ (A+C)^2 | X,Y]\notag\\
	&=\frac{1}{N^2} \E [ A(A-1) + A + C(C-1) + C + 2AC | X,Y]\notag\\
	&= \frac{1}{N^2} \Bigg[ (N-c)(N-c-1) (p_1 + (1-p_1-p_2)X)^2 + (N-c)(p_1 + (1-p_1-p_2)X)\notag\\
	&\hspace{.5cm} + c(c-1)(q_1 + (1-q_1-q_2)Y)^2 + c(q_1 + (1-q_1-q_2)Y)\notag\\
	&\hspace{.5cm} + 2(N-c)c(p_1 + (1-p_1-p_2)X)(q_1 + (1-q_2-q_2)Y)\Bigg].\label{eq:x2mom}
}
Furthermore,
\ba{
\E [X' X | X,Y] = \frac{N-c}{N} (p_1X + (1-p_1 - p_2)X^2) + \frac{c}{N}(q_1X + (1-q_1-q_2)XY).
}
For readability, we treat each power of $X$ in $\frac{1}{2\lambda}\E[ (X'-X)^2 | X,Y]$ separately. First, the coefficient of $X^2$ is as follows.
\ban{
\frac{1}{N} & \Bigg[ (N-c)(N-c-1)(1-p_1-p_2)^2 - 2(N^2-cN)(1-p_1-p_2) + N^2 \Bigg] \notag\\
	&=\frac{1}{N} \Bigg[ N^2[ (1-p_1-p_2)^2 -2(1-p_1-p_2) + 1] \notag\\
	&\hspace{.5cm}+ N[(-2c-1)(1-p_1-p_2)^2 + 2c(1-p_1-p_2)]+ c(c-1)(1-p_1-p_2)^2\Bigg]\notag\\
	&= -1 + N (p_1+p_2)^2  + 2c [ (p_1+p_2) - (p_1+p_2)^2] + 2(p_1+p_2) - (p_1+p_2)^2 \notag\\
	&\hspace{.5cm}+ \frac{1}{N} c(c-1) (1-p_1-p_2)^2.\label{ti2X2}
}
For the coefficient of $X$,
\ban{
\frac{1}{N}& \Big\{ (N-c)(N-c-1)2p_1(1-p_1-p_2) + (N-c)(1-p_1-p_2)\notag \\
	&\hspace{.5cm}+ 2c(N-c) (1-p_1-p_2)q_1 - 2N(N-c)p_1 -2cNq_1) \Big\}\notag\\
	&= N \Big[ 2p_1(1-p_1-p_2) - 2p_1 \Big]\notag\\
	&\hspace{.5cm} +  \Big[ -2(2c+1)p_1(1-p_1-p_2) + (1-p_1-p_2) + 2cq_1(1-p_1-p_2) + 2c(p_1-q_1) \Big]\notag\\
	&\hspace{.5cm}+ \frac{1}{N} \Big[2c(c-1)p_1(1-p_1-p_2) - c(1-p_1-p_2) - 2c^2(1-p_1-p_2)q_1\Big]\notag\\
	&= 1 - 2Np_1(p_1+p_2) -2(2c+1)p_1(1-p_1-p_2) - (p_1+p_2) - 2cq_1(p_1+p_2) + 2cp_1 \notag\\
	&\hspace{.5cm} + \frac{1}{N} \Big[ (2c(c-1)p_1 - c + 2c^2q_1)(1-p_1-p_2) \Big]\notag\\ 
	&= 1 + \Big[ -2Np_1(p_1+p_2) - 2(2c+1)p_1 - (p_1+p_2) + 2cp_1 - \frac cN\Big]\notag\\ 
	&\hspace{.5cm} + \Big[ 2(2c+1)p_1(p_1+p_2) - 2cq_1(p_1+p_2) + \frac1N \big(( 2c(c-1) + 2c^2q_1)(1-p_1-p_2) + c(p_1+p_2) \big) \Big],\label{ti2X}
}
Where in the last equality we have sorted the terms into leading and smaller order terms. For the remaining terms,
\ban{
&\frac{1}{N}\Big| (N-c)(N-c-1)p_1^2 + (N-c)p_1 + c(c-1)(q_1 + (1-q_1-q_2)Y)^2 + c(q_1 + (1-q_1-q_2)Y) \notag\\
	&\hspace{0.2cm}+ 2c(N-c)p_1(q_1 + (1-q_1-q_2)Y) +[2c(N-c)(1-p_1-p_2)(1-q_1-q_2)-2cN(1-q_1-q_2)]XY\Big|\notag\\
	&\leq Np_1^2 + p_1 +\frac{1}{N}\Big| c(c-1)(q_1 + (1-q_1-q_2)Y)^2 + c(q_1 + (1-q_1-q_2)Y)\notag \\
	&+2c(N-c)p_1(q_1 + (1-q_1-q_2)Y) + [2cN(-p_1-p_2)(1-q_1-q_2)-2c^2(1-p_1-p_2)(1-q_1-q_2)]XY \Big|\notag\\
	& \leq Np_1^2 + p_1 + \frac{2c^2}{N} + 2c(p_1+p_2).\label{ti2C}
}
Taking absolute values of~\eq{ti2X2},~\eq{ti2X},~\eq{ti2C} and some elementary bounding yields,
\ba{
\E &\bbbabs{ \frac{\alpha}{2} X(1-X) - \frac{1}{2\lambda} \E[ (X'-X)^2 | X,Y] }\\
	&\leq N(p_1+p_2)^2 + (2c+2)(p_1+p_2) + \frac{c^2}{N} + (2Np_1+1)(p_1+p_2) + (4c+2)p_1+ \frac cN\\
	&+ ((4c+2)p_1+ 2cq_1)(p_1+p_2) + \frac{1}{N} (2c^2(p_1+q_1) + c(p_1+p_2)) \\
	&+  Np_1^2 + p_1 + \frac{2c^2}{N} + 2c(p_1+p_2)\\
	&\leq N(p_1+p_2)^2 +Np_1^2 + (4c+3 + 2Np_1)(p_1+p_2) + (4c+3)p_1+\frac{3c^2+c}{N}\\
	&\hspace{0.5cm} + ((4c+2)p_1+ 2cq_1)(p_1+p_2) + \frac{1}{N} (2c^2(p_1+q_1) + c(p_1+p_2)).
}
The second bound~\eq{ti2momb} follows via an analogous derivation up to a scaling constant. For the final bound~\eq{ti2momc}, first recall that $X'$ and $Y'$ are conditionally independent, then using~\eq{eq:XX} and~\eq{eq:YY},
\ba{
\bbabs{\E[ &(X'-X)(Y'-Y)|X,Y]} = \bbabs{\E[ X'-X| X,Y]\E[Y'-Y|X,Y]}\\
	&\leq \bbbabs{ \Bigg( \frac{N-c}{N}(p_1 - (p_1+p_2)X ) + \frac cN(Y-X) + \frac cN (q_1 -(q_1+q_2)Y) \Bigg)}\\
	&\hspace{1cm} \cdot  \bbbabs{ \Bigg( \frac{M-c}{M}(q_1 - (q_1+q_2)Y ) + \frac cM(X-Y) + \frac cM (p_1 -(p_1+p_2)Y) \Bigg)}\\
	&\leq \Big( p_1 + p_2 + \frac{c}{N} (1 + q_1 + q_2) \Big) \Big( q_1 + q_2 + \frac{c}{M} (1 + p_1 + p_2) \Big)
}
\end{proof}
\begin{lemma}\label{lem:3mom}
Let $\lambda = \frac{1}{2N}$, and set
\ba{
\eps_X &:= (p_1 + p_2) + \frac{c}{N}(1 + q_1 + q_2),\\
\eps_Y &:= (q_1 + q_2) + \frac{c}{M}(1+q_1+q_2).
}
Then,
\ban{
\frac{1}{\lambda} \E |(X'-X)^3| &\leq \Bigg[  \frac{1}{N} \big(  4N^2 + 4c^2 + 6cN \Big)^\frac{1}{4} +\eps_X \Bigg]^2 \Bigg[ 2\sqrt{N} + 2N \eps_X \Bigg],\label{ti3moma}\\
\frac{1}{\lambda} \E| (X'-X)^2 (Y'-Y)| &\leq \Bigg[  \frac{1}{N} \big(  4N^2 + 4c^2 + 6cN \big)^\frac{1}{4} +\eps_X \Bigg]^2 \Bigg[ \frac{2N}{\sqrt M} + 2N\eps_Y \Bigg],\label{ti3momb}\\
\frac{1}{\lambda}\E| (X'-X) (Y'-Y)^2| & \leq  \Bigg[  \frac{1}{M} \big(  4M^2 + 4c^2 + 6cM \big)^\frac{1}{4} +\eps_Y \Bigg]^2 \Bigg[ 2\sqrt N +2 N\eps_X \Bigg],\label{ti3momc}\\
\frac{1}{\lambda}\E| (Y'-Y)^3| & \leq \Bigg[  \frac{1}{M} \big(  4M^2 + 4c^2 + 6cM \big)^\frac{1}{4} +\eps_Y \Bigg]^2 \Bigg[ \frac{2N}{\sqrt M} + 2N\eps_Y \Bigg].\label{ti3momd}
}
\end{lemma}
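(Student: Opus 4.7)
The plan is to decompose each one-step increment into a conditionally-centred noise term plus a drift that is deterministic given $(X,Y)$, bound the former in $L^p$ norms via standard binomial central-moment formulas together with Minkowski's inequality, and then read off each of the four estimates with a single application of Cauchy-Schwarz.

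More concretely, I would set $W := X' - X$ and $V := Y' - Y$, and write $W = \tilde W + \mu_W$ with $\mu_W := \E[W \mid X,Y]$ and $\tilde W := W - \mu_W$ (and analogously for $V$). From the explicit formula~\eq{eq:XX} for $\mu_W$, the triangle inequality gives $|\mu_W| \leq \eps_X$ almost surely, and similarly $|\mu_V| \leq \eps_Y$. Since $\tilde W = \tfrac{1}{N}[(A - \E[A\mid X,Y]) + (C - \E[C\mid X,Y])]$ is, conditional on $(X,Y)$, a sum of two independent centred binomials, I would compute
\begin{align*}
\E \tilde W^2 &= \tfrac{1}{N^2}\bklr{\E\Var(A\mid X,Y) + \E\Var(C\mid X,Y)} \leq \tfrac{1}{N},\\
\E \tilde W^4 &= \tfrac{1}{N^4}\E\bklr{\mu_4(A\mid X,Y) + 6\Var(A\mid X,Y)\Var(C\mid X,Y) + \mu_4(C\mid X,Y)},
\end{align*}
where the conditional third-moment cross terms vanish by centring. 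Substituting the classical formula $\mu_4(\Bin(n,p)) = np(1-p)[1+(3n-6)p(1-p)]$ and using the crude (but clean) bounds $p(1-p) \leq 1$ and $n \leq n^2$ for $n \in \IZ^+$ yields $\mu_4(A\mid X,Y) \leq 4(N-c)^2$, $\mu_4(C\mid X,Y) \leq 4c^2$, and a cross term at most $6(N-c)c \leq 6cN$, so $\E \tilde W^4 \leq N^{-4}(4N^2 + 6cN + 4c^2)$. The identical argument with $M$ replacing $N$ handles $\tilde V$.

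Minkowski's inequality then gives $\|W\|_p \leq \|\tilde W\|_p + \eps_X$ for $p \in \{2,4\}$ and the analogous statement for $V$, so taking $p$-th roots reproduces precisely the bracketed factors appearing in the statement. All four estimates now follow from a single Cauchy-Schwarz step: for~\eq{ti3moma} I use $\E|W|^3 = \E[W^2 \cdot |W|] \leq \|W\|_4^2\|W\|_2$; for~\eq{ti3momb} the non-negativity of $W^2$ gives $\E|W^2 V| = \E[W^2|V|] \leq \|W\|_4^2\|V\|_2$; and~\eq{ti3momc},~\eq{ti3momd} are obtained by interchanging the roles of $W$ and $V$. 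Multiplying through by $1/\lambda = 2N$ and collecting the factors matches the four right-hand sides. I do not anticipate a genuine obstacle here---every step is routine---but the one point requiring care is selecting the two crude binomial bounds above so that the resulting constants land exactly on $(4N^2, 6cN, 4c^2)$ rather than on a tighter but messier alternative.
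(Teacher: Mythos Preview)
Your proposal is correct and follows essentially the same route as the paper's proof: H\"older/Cauchy--Schwarz to reduce the third absolute moment to $\|W\|_4^2\|W\|_2$ (and the analogous mixed products), Minkowski to split each increment into a conditionally-centred binomial sum plus a drift bounded pointwise by $\eps_X$ or $\eps_Y$, and then the crude binomial bounds $\mu_4(\Bin(n,p))\le 4n^2$, $\Var(\Bin(n,p))\le n$ to recover exactly the constants $(4N^2,6cN,4c^2)$. The only cosmetic differences are notation and that the paper phrases the first step as H\"older rather than Cauchy--Schwarz.
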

\begin{proof}
We begin with proving~\eq{ti3moma}. Using H\"older's inequality
\ba{
\E |X'-X|^3 &\leq \Big( \E(X'-X)^4 \Big)^{\frac12} \Big( \E(X'-X)^2 \Big)^{\frac12}.
}
Using Minkowski's inequality,
\ban{
[\E(X'-X)^4]^{\frac14} &= \Big[\E\big( X' -\E(X'|X,Y) + \E(X'|X,Y) - X\big)^4\Big]^{\frac14}\notag\\
	&\leq \Big[ \E\big(X'-\E(X'|X,Y) \big)^4 \Big]^{\frac14} + \Big[ \E\big(\E(X'|X,Y) - X \big)^4 \Big]^{\frac14}.\label{ti3A}
}
We further decompose
\ba{
\E\big(X'-\E(X'|X,Y) \big)^4 &= \frac{1}{N^4} \E [ A - \E(A|X,Y) + C - \E(C|X,Y) ]^4\\
	&=\frac{1}{N^4} \E \Big[ \E\big(A - \E(A|X,Y) | X,Y\big)^4 + \E\big(C - \E(C|X,Y) | X,Y\big)^4\\
	&\ \  + 6 \E\big(A - \E(A|X,Y) | X,Y\big)^2\E\big(C - \E(C|X,Y) | X,Y\big)^2 \Big].
}
Note that for $W \sim \Bin(n,p)$,
\ban{
\E(W - np)^4 &= 3(np(1-p))^2 + np(1-p)(1-6p(1-p)) \leq 3(np(1-p))^2 + np(1-p) \leq 4n^2,\label{binom4}\\
\E(W - np)^2 &= np(1-p) \leq n\label{binom2},
}
Then recalling the definitions of $A$ and $C$ at the start of this section,
\ban{
\E\big(X'-\E(X'|X,Y) \big)^4 &\leq \frac{1}{N^4} \Big( 4(N-c)^2 + 4c^2 + 6(N-c)c \Big) \leq \frac{1}{N^4} \Big( 4(N^2 + c^2) + 6cN \Big).\label{ti3a}
}
From~\eq{eq:XX}, the following term ends up being smaller than the leading order term, so we simply denote it by $\eps_X$ for ease of reading.
\ban{
\abs{\E( X' - X|X,Y)} \leq (p_1 + p_2) + \frac{c}{N}(1 + q_1 + q_2) =: \eps_X. \label{ti3b}
}
Using~\eq{ti3a} and~\eq{ti3b} in~\eq{ti3A} yields
\ba{
[ \E(X'-X)^4]^\frac14 \leq \frac{1}{N}\Big( 4N^2 + 4c^2 + 6cN \Big)^{\frac14} + \eps_X.
}
In a similar manner,
\ba{
\big[\E(X'-X)^2\big]^\frac12 &= \big[\E( X' - \E(X'|X,Y) + \E(X'|X,Y) - X)^2\big]^\frac12\\
	&\leq [\E( X' - \E(X'|X,Y))^2]^\frac12 + [\E( \E(X'|X,Y) - X)^2]^\frac12.
}
And
\ba{
\E(X'-\E(X'|X,Y))^2 &=\frac{1}{N^2} \E\Big[  \E\big[(A - \E(A|X,Y))^2 | X,Y\big] + \E\big[(C - \E(C|X,Y))^2 | X,Y\big] \Big]\\
	&\leq\frac{1}{N^2}(N-c+c) = \frac{1}{N}.
}
Then,
\ba{
\big[\E(X'-X)^2\big]^\frac12 \leq \frac{1}{\sqrt N} + \eps_X.
}
Hence,
\ba{
\E|(X'-X)^3| \leq \Big[  \frac{1}{N} \big(  4N^2 + 4c^2 + 6cN \big)^\frac{1}{4} +\eps_X \Big]^2 \Big[ \frac{1}{\sqrt N} + \eps_X \Big]
}
For the second bound~\eq{ti3moma}, we analogously define
\ba{
\eps_Y = (q_1 + q_2) + \frac{c}{M}(1+p_1 + p_2). 
}
Then in a similar fashion to the bounds for $\E|X'-X|^3$,
\ba{
\E| (X'-X)^2 (Y'-Y)| &\leq \Big[\E (X'-X)^4 \Big]^{\frac12} \Big[\E (Y'-Y)^2 \Big]^{\frac12}\\
	&\leq  \Bigg[  \frac{1}{N} \big(  4N^2 + 4c^2 + 6cN \big)^\frac{1}{4} +\eps_X \Bigg]^2 \Bigg[ \frac{1}{\sqrt M} + \eps_Y \Bigg]
}
The final two bounds~\eq{ti3momc} and~\eq{ti3momd} follow from symmetric arguments.
\end{proof}
\begin{proof}[Proof of Theorem~\ref{thm:WF}]
Given~\eq{eq:XX} and~\eq{eq:YY}, we apply Theorem~\ref{thm:ex} by setting $\l = \frac{1}{2N}$, $a_1 = 2p_1(N-c)$, $a_2 = 2p_2(N-c)$, $b_1 = 2N\frac{M-c}{M}q_1$, $b_2 = 2N \frac{M-c}{M} q_2$, $c_1 = 2c$, $c_2 = \frac{2cN}{M}$, $R_1 = \frac{c}{N}(q_1 - (q_1 + q_2)Y)$ and $R_2 =  \frac{c}{M}(p_1 - (p_1 + p_2)X)$. The theorem now follows from Lemmas~\ref{lem:2mom} and~\ref{lem:3mom} and routine bounds for $R_1$ and $R_2$.
\end{proof}
\subsection{Proof of Theorem~\ref{thm:SB}}
We will again prove the above bounds using Theorem~\ref{thm:ex}. As before, let $(X,Y)$ be a random vector in stationarity for the seed-bank Markov chain described in the introduction, and let $(X',Y')$ be one step ahead in Markov chain. We now again define some auxiliary variables.

Let $A$ be the number of offspring of type $1$ for the active population. Let $B$ ($C$) be the number of individuals of type $1$ that migrated from the active (seed-bank) population to the seed-bank (active population). Finally let $D$ be the number of individuals of type $1$ that did not migrate from the seed-bank. With these definitions, $(X',Y')|(X,Y) = (A + C, B + D)|(X,Y)$. Furthermore,
\ba{
A|(X,Y) &\sim \Bin(N-c, p_1 + (1-p_1 - p_2)X),\\
B|(X,Y) &\sim \Bin(c,p_1 + (1-p_1 - p_2)X),\\
C|(X,Y) &\sim \Hg(M,MY,c),\\
D|(X,Y) &\sim \Hg(M,MY,M-c).
}
Importantly, the above random variables are pairwise conditionally independent except $D|(X,Y) = M-C|(X,Y)$.
\ban{
\E[ X'-X | X,Y] &= \frac{N-c}{N} (p_1 + (1-p_1-p_2)X) + \frac{c}{N} Y - X\notag\\
	&= \frac{N-c}{N}(p_1 - (p_1 + p_2)X) + \frac{c}{N} (Y-X),\label{eq:XX2}\\
\E[Y'-Y | X,Y] &= \frac{M-c}{M} Y + \frac{c}{M} (p_1 + (1-p_1-p_2)X) - Y\notag\\
	&= \frac{c}{M}(X-Y) + \frac{c}{M}(p_1 - (p_1 + p_2)X).\label{eq:YY2}
}
We again apply Theorem~\ref{thm:ex} and begin with the following pair of lemmas.
\begin{lemma}\label{lem:SB2}
Let $\lambda = \frac{1}{2N}$, $\alpha = 2$ and $\beta = 0$, then
\ban{
\E &\bbbabs{ \frac{\alpha}{2} X(1-X) - \frac{1}{2\lambda} \E[ (X'-X)^2 | X,Y] }\notag\\
	&\leq N (p_1+p_2)^2 + Np_1^2+(4c+3 + (4c+2+2N)p_1)(p_1+p_2) + (2c+3)p_1\notag\\
	&\hspace{1cm} +  \frac{1}{N}(3c^2 + 2c^2p_1 + cp_2).\label{sbmom2X}\\
\E &\bbbabs{ \frac{\beta}{2} Y(1-Y) - \frac{1}{2\lambda} \E[ (Y'-Y)^2 | X,Y] }\notag\\
	& \leq \frac{4c^2N(1 + p_1)}{M(M-1)} \label{sbmom2Y},\\
\frac{1}{\lambda} \big|\E& [ (X'-X)(Y'-Y) | X,Y]\big|\notag \\
	&\leq\frac{2}{M} \Big[ (2cN + cNp_1)(p_1+p_2) + (2c^2 + 2cN)p_1 + 3c^2+cNp_1^2+ \frac{c^2M}{M-1}\Big].\label{sbmom2XY}
}
\end{lemma}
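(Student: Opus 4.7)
The plan is to parallel the structure of the proof of Lemma~\ref{lem:2mom}, with three adaptations reflecting the differences between the seed-bank and two-island Wright--Fisher models: (i) $D = MY - C$, so that $M(Y'-Y) = B - C$; (ii) $C$ is hypergeometric, giving $\Var(C|X,Y) = cY(1-Y)(M-c)/(M-1)$ and $\E[C(C-1)|X,Y] = c(c-1)MY(MY-1)/(M(M-1))$; and (iii) $A, B, C$ are pairwise conditionally independent, but $C$ appears in both $X'$ and $Y'$, so the cross term does not factor.

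For \eqref{sbmom2X}, write $p_X := p_1 + (1-p_1-p_2)X$. Using conditional independence of $A$ and $C$,
\ba{
\E[(A+C)^2|X,Y] = \E[A(A-1)|X,Y] + \E[A|X,Y] + \E[C(C-1)|X,Y] + \E[C|X,Y] + 2\E[A|X,Y]\,\E[C|X,Y],
}
with $\E[A(A-1)|X,Y] = (N-c)(N-c-1)p_X^2$ and $\E[C(C-1)|X,Y] = c(c-1)MY(MY-1)/(M(M-1))$. I would then form $(1/(2\lambda))\E[(X'-X)^2|X,Y] = (1/N)\E[(A+C)^2|X,Y] - 2X\,\E[A+C|X,Y] + NX^2$, subtract $(\alpha/2)X(1-X) = X - X^2$, and collect by powers of $X$ (and $Y$). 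The coefficients can then be controlled by the triangle inequality using $X,Y \in [0,1]$. The computation closely parallels \eqref{ti2moma}, but is simpler because there is no mutation on the seed-bank side and $C$'s contribution comes through the hypergeometric second factorial moment rather than a binomial one.

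For \eqref{sbmom2Y}, since $\beta = 0$ only the second term is present. From $M(Y'-Y) = B - C$ and the conditional independence of $B$ and $C$,
\ba{
\E[(Y'-Y)^2|X,Y] = \frac{1}{M^2}\Big[cp_X(1-p_X) + \frac{c(M-c)}{M-1}Y(1-Y) + c^2(p_X - Y)^2\Big].
}
Multiplying by $1/(2\lambda) = N$ and bounding $p_X(1-p_X) \leq 1+p_1$, $Y(1-Y) \leq 1$, $(p_X - Y)^2 \leq 1+p_1$, I would combine the three contributions into a single $c^2$-dominated expression, absorbing the lower-order $c$-terms via the shared factor $1/(M-1)$ to recover the stated bound $4c^2N(1+p_1)/(M(M-1))$. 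For \eqref{sbmom2XY}, I would expand
\ba{
NM(X'-X)(Y'-Y) = (A+C-NX)(B-C) = AB - AC + BC - C^2 - NXB + NXC,
}
and take conditional expectations term-by-term using the pairwise independence of $A, B, C$. The only genuinely new contribution compared to the Wright--Fisher case is $-\E[C^2|X,Y] = -cY(1-Y)(M-c)/(M-1) - c^2 Y^2$, which after multiplication by $1/\lambda = 2N$ and the prefactor $1/(NM)$ produces the $c^2 M/(M-1)$ summand in the stated bound. The remaining pieces are bounded exactly as in the proof of \eqref{ti2momc} using $X, Y \in [0,1]$ and $p_X \leq 1 + p_1$.

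The main obstacle is the coefficient bookkeeping in \eqref{sbmom2X}: replacing the binomial moment $(MY)^2/M^2$ by the hypergeometric $MY(MY-1)/(M(M-1))$ perturbs the coefficient of $X^2$ in $N\E[(X')^2|X,Y]$ by terms of order $c/M$ and $c^2/(M(M-1))$, which must be carefully regrouped so that every leftover term can be absorbed into the compact stated bound without loss of asymptotic order. By contrast, \eqref{sbmom2Y} and \eqref{sbmom2XY} reduce to elementary second-moment computations once the identity $M(Y'-Y) = B - C$ and the single hypergeometric variance correction are in place, and are comparatively transparent.
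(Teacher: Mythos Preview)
Your approach is correct and essentially parallels the paper's proof. Two small corrections are worth noting.

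First, your ``main obstacle'' paragraph misidentifies where the hypergeometric correction lands. The term $\E[C(C-1)\mid X,Y]=c(c-1)MY(MY-1)/(M(M-1))$ contains no $X$, so it does \emph{not} perturb the coefficient of $X^2$ in $N\E[(X')^2\mid X,Y]$; the $X^2$ and $X$ coefficients in \eqref{sbmom2X} are computed exactly as in the two-island case (compare \eqref{ti2X2}--\eqref{ti2X} with the paper's \eqref{sbx2a}--\eqref{sbx2b}, which differ only by the absence of $q$-terms). The hypergeometric factorial moment enters only in the ``remaining terms'' block analogous to \eqref{ti2C}.

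Second, your reference to bounding the cross term ``exactly as in the proof of \eqref{ti2momc}'' is slightly off: there $X'$ and $Y'$ are conditionally independent so the cross term factors, whereas here it does not. You correctly identify the extra piece $-\E[C^2\mid X,Y]$, and in fact your decomposition via $M(Y'-Y)=B-C$ shows cleanly that
\[
\E[NM(X'-X)(Y'-Y)\mid X,Y] = N\E[X'-X\mid X,Y]\cdot M\E[Y'-Y\mid X,Y] - \Var(C\mid X,Y),
\]
since $\Cov(A+C,B-C\mid X,Y)=-\Var(C\mid X,Y)$. This is a mild simplification of the paper's route, which instead expands $(A+C-NX)(B+D-MY)$ and handles $\E[CD\mid X,Y]$ separately. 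Similarly, your direct variance computation $\E[(B-C)^2\mid X,Y]=\Var(B)+\Var(C)+c^2(p_X-Y)^2$ for \eqref{sbmom2Y} is more transparent than the paper's factorial-moment expansion of $(B+D)^2$; both lead to the stated bound after using $c\geq 1$ and $M^2\geq M(M-1)$.
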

\begin{proof}
We first prove the bound~\ref{sbmom2X}.
\ban{
\E[ (X')^2 | X,Y] &= \frac{1}{N^2} \E[ (A+C)^2 | X,Y]\notag\\
	&= \frac{1}{N^2} \E[ A(A-1) + A + C(C-1) + C + 2AC | X,Y]\notag\\
	&= \frac{1}{N^2} \Big[ (N-c)(N-c-1) (p_1 + (1-p_1-p_2)X)^2 + (N-c)(p_1 + (1-p_1-p_2)X)\notag\\
	&\hspace{1cm} + \frac{MY(MY-1)c(c-1)}{M(M-1)} + cY + 2cY(N-c)(p_1 + (1-p_1-p_2)X) \Big]\label{sbmomx2},
}
and
\ba{
E[X'X|X,Y] = \frac{(N-c)}{N}(p_1X + (1-p_1-p_2)X^2) + \frac{c}{N} XY.
}
As before, for $\frac{1}{2\lambda} \E[ (X'-X)^2|X,Y]$, we deal with the coefficients of different powers of $X$ separately, starting with $X^2$.
\ban{
\frac{1}{N} &\Big[ (N-c)(N-c-1)(1-p_1-p_2)^2 - 2N(N-c)(1-p_1-p_2) + N^2\Big]\notag\\
	&= \frac{1}{N} \Big[ N^2[ (1-p_1-p_2)^2 - 2(1-p_1-p_2) + 1] \notag\\
	&\hspace{1cm}+ N[ (-2c-1)(1-p_1-p_2)^2 + 2c(1-p_1-p_2)]\notag\\
	&\hspace{1cm} + c(c-1)(1-p_1-p_2)^2\Big]\notag\\
	&= -1 + N (p_1+p_2)^2  + 2c [ (p_1+p_2) - (p_1+p_2)^2] + 2(p_1+p_2) - (p_1+p_2)^2\notag \\
	&\hspace{1cm}+ \frac{1}{N} c(c-1) (1-p_1-p_2)^2.\label{sbx2a}
}
For the coefficient of $X$,
\ban{
\frac{1}{N}& \Big[ (N-c)(N-c-1)2p_1(1-p_1-p_2) + (N-c)(1-p_1-p_2) - 2N(N-c)p_1\Big]\notag\\
	&= N\big[ 2p_1(1-p_1-p_2) - 2p_1)\big]\notag\\
	&\hspace{1cm} + \big[ (-2c-1)2p_1(1-p_1-p_2) + (1-p_1-p_2) + 2cp_1 \big]\notag\\
	&\hspace{1cm} + \frac1N \big[c(c-1)2p_1(1-p_1-p_2)-c(1-p_1-p_2)\big]\notag\\
	&= 1 -2Np_1(p_1+p_2) -(2c+2)p_1 - (p_1+p_2)\notag\\
	&\hspace{1cm} + (4c+2)p_1(p_1+p_2) + \frac{1}{N} ( 2c(c-1)p_1(1-p_1-p_2) + c(p_1+p_2))\big]\label{sbx2b}.
}
And finally the remaining terms,
\ban{
\frac{1}{N} &\Big| (N-c)(N-c-1)p_1^2 + (N-c)p_1 + \frac{MY(MY-1)c(c-1)}{M(M-1)} \notag\\
	&\hspace{1cm}+ (c + 2c(N-c)p_1)Y + (2c(N-c)(1-p_1-p_2)-2cN)XY \Big|\notag\\
	&\leq \frac{1}{N}\Big|N^2p_1^2 + Np_1 + c(c-1) + c + 2c(N-c)(p_1Y - (p_1 + p_2)XY) - 2c^2 (1-p_1-p_2)XY \Big|\notag \\ 
	&\leq Np_1^2+p_1 + \frac{2c^2}{N} + 2c(p_1 +p_2).\label{sbx2c}
}
Combining~\eq{sbx2a},~\eq{sbx2b},~\eq{sbx2c},
\ba{
\E &\bbbabs{ \frac{\alpha}{2} X(1-X) - \frac{1}{2\lambda} \E[ (X'-X)^2 | X,Y] }\\
	&\leq N (p_1+p_2)^2 +(4c+3 + 2Np_1)(p_1+p_2) + (2c+3)p_1 + Np_1^2 + \frac{3c^2}{N}\\
	&\hspace{1cm} + (4c+2)p_1(p_1+p_2) + \frac{1}{N}(2c^2p_1 + cp_2).
}
For the second bound~\eq{sbmom2Y}, 
\ban{
\E[ &Y'^2| X,Y] = \frac{1}{M^2} \E[ (B+D)^2 | X,Y]\notag\\
	&=\frac{1}{M^2} \E[ B(B-1) + B + D(D-1) +D + 2BD|X,Y]\notag\\
	&= \frac{1}{M^2} \Big[ \frac{MY(MY-1)(M-c)(M-c-1)}{M(M-1)} + (M-c)Y + c(c-1)(p_1 + (1-p_1-p_2)X)^2\notag\\
	&\hspace{1cm} + c(p_1 + (1-p_1-p_2)X) + 2(M-c)Yc(p_1 + (1-p_1-p_2)X)\Big]\label{sbmomy2}.
}
And
\ba{
\E[Y'Y | X,Y] = \frac{M-c}{M}\ Y^2 + \frac{c}{M}(p_1Y + (1-p_1-p_2)XY).
}
Putting the above together yields
\ba{
\E[ &(Y'-Y)^2|X,Y]\\ 
	&= \frac{c(c-1)}{M(M-1)} Y^2 + \Big[ \frac{c(M-c)}{M^2(M-1)} -\frac{2c^2p_1}{M^2} \Big] Y +  \frac{1}{M^2} \Big[c(c-1)(p_1 + (1-p_1-p_2)X)^2\\
	& + c(p_1 + (1-p_1-p_2)X) +[2c(M-c)(1-p_1-p_2)-2cM(1-p_1-p_2)]XY\Big].
}
The bound~\eq{sbmom2Y} now follows from taking absolute values and elementary steps. For the final bound~\eq{sbmom2XY},
\ba{
\E[ (X'-X)(Y'-Y) | X,Y] = \frac{1}{NM} \E[ (A+C-NX)(B+D-MY) | X,Y].
}
The complication in this computation is that $A,B,C,D$ are all conditionally pairwise independent except for $C$ and $D$. Therefore,
\ban{
\E[ (X'-X)&(Y'-Y) | X,Y] = \frac{1}{NM} \Big[ \E(A - NX | X,Y)\E(B+D-MY|X,Y)\notag \\
	&+ \E(CD|XY) + \E(C|X,Y)\E(B-MY|X,Y) \Big].\label{sbmomxy}
}
Now noting $C|(X,Y) = (MY-D) | (X,Y)$,
\ba{
\E[ CD | X,Y] &= \E[ D(MY-D)|X,Y] = MY\E[D|X,Y] - \E[D(D-1) + D|X,Y]\\
	&= MY(M-c)Y - \frac{MY(MY-1)(M-c)(M-c-1)}{M(M-1)} - (M-c)Y.
}
Hence,
\ba{
\frac{1}{\lambda} \big|\E[ &(X'-X)(Y'-Y) | X,Y]\big| \\
	&= \frac{2}{M} \Big| [ (N-c)(p_1 + (1-p_1-p_2)X) - NX][(M-c)Y + c(p_1+(1-p_1-p_2)X) - MY]\\
	&\hspace{1cm}+ MY(M-c)Y - \frac{MY(MY-1)(M-c)(M-c-1)}{M(M-1)} - (M-c)Y\\
	&\hspace{1cm}+cY[ c(p_1 + (1-p_1-p_2)X) - MY]\Big|\\
	&= \frac{2}{M} \Bigg| \Big[ (-N(p_1+p_2) - c(1-p_1-p_2))( c(1-p_1-p_2))\Big] X^2\\
	&\hspace{1cm} + \Big[ (-N(p_1+p_2) - c(1-p_1-p_2)) cp_1 + c(N-c)p_1(1-p_1-p_2)\Big] X\\
	&\hspace{1cm} + \Big[M(M-c) - \frac{M^2(M-c)(M-c-1)}{M(M-1)} - cM\Big] Y^2\\
	&\hspace{1cm} + \Big[ -c(N-c)p_1 + \frac{M(M-c)(M-c-1)}{M(M-1)} - (M-c) + c^2p_1 \Big] Y\\
	&\hspace{1cm} + \Big[ cN(p_1+p_2) + 2c^2(1-p_1-p_2)\Big]XY + c(N-c)p_1^2\Bigg|\\
	&\leq \frac{2}{M} \Big[ cN(p_1+p_2) + c^2 + cNp_1(p_1+p_2) + c^2p_1+cNp_1+\frac{c(c-1)M}{M-1}\\
	&\hspace{1cm} + \frac{cM}{M-1} + cNp_1 + c^2p_1 + cN(p_1+p_2) + 2c^2 + cNp_1^2 \Big]\\
	&\leq \frac{2}{M} \Big[ (2cN + cNp_1)(p_1+p_2) + (2c^2 + 2cN)p_1 + 3c^2+ cNp_1^2+\frac{c^2M}{M-1}\Big].
}
\end{proof}
\begin{lemma}\label{lem:SB3}
Let $\lambda = \frac{1}{2N}$ and set
\ba{
\eps_{M,c} := \frac{cM}{4(M-1)(M-2)(M-3)} \Big[ M^2(1+c) + M(1 + 6c + c^2) + 6c^2\Big].
}
Then,
\ban{
\frac{1}{\lambda}&\E |(X'-X)^3|\notag \\
	&\leq  \Bigg[\frac1N\Big(4N^2 + 2cN + \eps_{M,c} \Big)^\frac14 + p_1+p_2+\frac{c}{N}\Bigg]^2\Bigg[ \sqrt N + N(p_1+p_2)+c \Bigg],\label{sb3moma}\\
\frac{1}{\lambda}&\E| (X'-X)^2 (Y'-Y)| \notag\\
	&\leq \Bigg[\frac1N\Big( 4N^2 +2cN + \eps_{M,c}  \Big)^\frac14 + p_1+p_2+\frac{c}{N}\Bigg]^2 \Bigg[\sqrt{ \frac{5cN^2}{4M^2}} + \frac{cN}{M}(1 + p_1 + p_2)\Bigg],\label{sb3momb}\\
\frac{1}{\lambda}&\E| (X'-X) (Y'-Y)^2|\notag \\
	& \leq   \Bigg[ \frac{1}{M} \Big(6c^2 + \eps_{M,c}\Big)^\frac14 + \frac{c}{M}(1 + p_1 + p_2) \Bigg]^2 \Bigg[\sqrt N + N(p_1+p_2)+c \Bigg],\label{sb3momc}\\
\frac{1}{\lambda}&\E| (Y'-Y)^3|\notag\\
	 & \leq \Bigg[ \frac{1}{M} \Big(6c^2 + \eps_{M,c}\Big)^\frac14 + \frac{c}{M}(1 + p_1 + p_2) \Bigg]^2 \Bigg[\sqrt{ \frac{5cN^2}{4M^2}} + \frac{cN}{M}(1 + p_1 + p_2))\Bigg].\label{sb3momd}
}
\end{lemma}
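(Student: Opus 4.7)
The plan is to follow the template of Lemma~\ref{lem:3mom} essentially verbatim, with the main new ingredient being that $C\sim\Hg(M,MY,c)$ and $D\sim\Hg(M,MY,M-c)$ are hypergeometric rather than binomial. First, by H\"older's inequality, $\E|X'-X|^3 \leq [\E(X'-X)^4]^{1/2}[\E(X'-X)^2]^{1/2}$, and for the mixed third moments we apply Cauchy-Schwarz, e.g.\ $\E|(X'-X)^2(Y'-Y)| \leq [\E(X'-X)^4]^{1/2}[\E(Y'-Y)^2]^{1/2}$, and analogously for~\eq{sb3momc}--\eq{sb3momd}. Each fourth-moment factor is then split by Minkowski into a ``variance'' piece and a ``drift'' piece:
\[
[\E(X'-X)^4]^{1/4} \leq [\E(X'-\E[X'|X,Y])^4]^{1/4} + [\E(\E[X'|X,Y]-X)^4]^{1/4},
\]
where the drift contribution is bounded directly from~\eq{eq:XX2} by $\eps_x:=p_1+p_2+c/N$, and the drift for $Y'$ is bounded from~\eq{eq:YY2} by $\eps_y:=(c/M)(1+p_1+p_2)$.

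The key step is the variance piece. Since $A$ and $C$ are conditionally independent given $(X,Y)$, and likewise $B$ and $D$ are conditionally independent (note that $D-\E D=-(C-\E C)$, but $B$ remains independent of both), all odd-order cross terms in the conditional expansion of $(A+C-\E[A+C|X,Y])^4$ (respectively $(B+D-\E[B+D|X,Y])^4$) vanish, leaving the sum of individual fourth central moments plus $6\Var(A|X,Y)\Var(C|X,Y)$ (respectively $6\Var(B|X,Y)\Var(C|X,Y)$). The binomial contributions $\E[(A-\E A)^4|X,Y]\leq 4(N-c)^2$ and $\E[(B-\E B)^4|X,Y]\leq 4c^2$ follow from~\eq{binom4}, while the cross terms are controlled via $\Var(A|X,Y)\leq N-c$, $\Var(B|X,Y)\leq c$, and the hypergeometric variance $\Var(C|X,Y)=cY(1-Y)(M-c)/(M-1)\leq c/4$; these produce the $2cN$ term in~\eq{sb3moma}--\eq{sb3momb} and the corresponding contribution to $6c^2$ in~\eq{sb3momc}--\eq{sb3momd}. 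The hypergeometric fourth central moment $\E[(C-\E C)^4|X,Y]$ is computed explicitly from the factorial moments $\E C^{(r)}=c^{(r)}(MY)^{(r)}/M^{(r)}$, and after bounding $Y(1-Y)\leq 1/4$ collapses into the compact form $\eps_{M,c}$, contributing the trailing term in all four bounds.

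For the second-moment factors we again use Minkowski, bounding $\{\E(X'-\E[X'|X,Y])^2\}^{1/2}\leq 1/(2\sqrt N)$ via $\Var(A|X,Y)\leq (N-c)/4$ and $\Var(C|X,Y)\leq c/4$, and $\{\E(Y'-\E[Y'|X,Y])^2\}^{1/2}\leq \sqrt{5c/(4M^2)}$ via $\Var(B|X,Y)\leq c/4$ and $\Var(D|X,Y)\leq c(M-c)/(4(M-1))\leq c/4$. After multiplication by $1/\lambda=2N$ and combination with the drift contributions $\eps_x$ and $\eps_y$, this yields the second bracketed factors in~\eq{sb3moma}--\eq{sb3momd}. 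The main obstacle, and the only nontrivial calculation, is the explicit evaluation of the hypergeometric fourth central moment; once the factorial-moment expansion is collected and $Y(1-Y)\leq 1/4$ applied, the factor $(M-1)(M-2)(M-3)$ arising from the fourth falling factorial in the denominator produces $\eps_{M,c}$ essentially by inspection. Assembling the pieces then gives the four bounds.
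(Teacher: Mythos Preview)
Your proposal is correct and follows essentially the same route as the paper: H\"older/Cauchy--Schwarz to reduce to fourth and second moments, Minkowski to separate the conditional-variance and drift pieces, conditional independence to expand the fourth central moment of $A+C$ (respectively $B+D$), and the explicit hypergeometric fourth central moment to produce $\eps_{M,c}$. The only cosmetic difference is that you invoke the sharper binomial variance bound $np(1-p)\leq n/4$ for the second-moment factors (yielding $1/(2\sqrt{N})$ rather than the paper's $1/\sqrt{N}$), which if anything tightens the constants; note that your stated inputs $\Var(B\mid X,Y)\leq c/4$ and $\Var(D\mid X,Y)\leq c/4$ actually give $\sqrt{c/(2M^2)}$, comfortably below the claimed $\sqrt{5c/(4M^2)}$.
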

\begin{proof}
We follow a similar approach to Lemma~\ref{lem:3mom} and begin with~\eq{sb3moma}. Using H\"older's inequality,
\ban{
\E |(X'-X)^3| &\leq \Big( \E(X'-X)^4 \Big)^{\frac12} \Big( \E(X'-X)^2 \Big)^{\frac12}.\label{sb1}
}
Using Minkowski's inequality,
\ban{
[\E(X'-X)^4]^{\frac14} &= \Big[\E\big( X' -\E(X'|X,Y) + \E(X'|X,Y) - X\big)^4\Big]^{\frac14}\notag\\
	&\leq \Big[ \E\big(X'-\E(X'|X,Y) \big)^4 \Big]^{\frac14} + \Big[ \E\big(\E(X'|X,Y) - X \big)^4 \Big]^{\frac14}\label{sbAAA}.
}
We further decompose
\ban{
\E\big(X'-\E(X'|X,Y) \big)^4 &= \frac{1}{N^4} \E [ A - \E(A|X,Y) + C - \E(C|X,Y) ]^4\notag\\
	&=\frac{1}{N^4} \E \Big[ \E\big(A - \E(A|X,Y) | X,Y\big)^4 + \E\big(C - \E(C|X,Y) | X,Y\big)^4\notag\\
	&\ \  + 6 \E\big(A - \E(A|X,Y) | X,Y\big)^2\E\big(C - \E(C|X,Y) | X,Y\big)^2 \Big].\label{sbBIG}
}
In addition to the binomial moments~\eq{binom4} and~\eq{binom2} used earlier, we also require Hypergeometric moments. For $W \sim \Hg(N,D,n)$
\ban{
\E\left(W - \frac{nD}{N}\right)^4 &= \frac{nD(N-D)(N-n)}{N^4(N-1)(N-2)(N-3)} \notag\\
	&\hspace{1cm} \cdot\Big[ N^2(6n^2 + N - 6nN + N^2) + 3D(D-N) (2N^2 + (n^2 - nN)(6+N)) \Big],\label{hgmom4}\\
\E\left(W - \frac{nD}{N} \right)^2 &= \frac{nD(N-D)(N-n)}{N^2(N-1)}.\label{hgmom2}
}
Then using~\eq{hgmom4}
\ban{
\E[ (C - &E(C|X,Y))^4 | X,Y]\notag\\
	& = \frac{cMY(M-MY)(M-c)}{M^4(M-1)(M-2)(M-3)}\Big[ M^2(6c^2 + M - 6cM + M^2)\notag \\
	&\ \ \ + 3MY(MY-M)(2M^2 + ( c^2 - cM)(6+M))\Big]\notag\\
	&= \frac{cMY(M-MY)(M-c)}{M^4(M-1)(M-2)(M-3)}\Big[ M^4 (1 - 6 Y + 3 c Y + 6 Y^2 - 3 c Y^2) \notag\\
	&\ \ \ + M^3 (1 - 6 c + 18 c Y - 3 c^2 Y - 18 c Y^2 + 3 c^2 Y^2) +  M^2 (6 c^2 - 18 c^2 Y + 18 c^2 Y^2)\Big]\notag\\
	&\leq \frac{cM}{4(M-1)(M-2)(M-3)} \Big[ M^2(1+c) + M(1 + 6c + c^2) + 6c^2\Big] =: \eps_{M,c}.\label{sbA1}
}
From~\eq{hgmom2},
\ban{
\E( (C - &E(C|X,Y))^2 | X,Y) = \frac{cMY(M-MY)(M-c)}{M^2(M-1)} \leq \frac{c}{4}.\label{sbA2}
}
Furthermore from~\eq{eq:XX2}, 
\ban{
\abs{ \E(X'|X,Y) - X} \leq p_1 + p_2 + \frac{c}{N}.\label{sbA2a}
}
Using~\eq{binom2} and~\eq{binom4}, $\E (A - E(A|X,Y))^4 \leq 4(N-c)^2$, $\E (A - E(A|X,Y))^2 \leq N-c$. Then combining~\eq{sbA1},~\eq{sbA2} into~\eq{sbBIG} and~\eq{sbA2a} into~\eq{sbAAA} yields
\ban{
[\E(X'-X)^4]^\frac14 \leq \frac1N\Bigg[4N^2 + 2cN + \eps_{M,c} \Bigg]^\frac14 + p_1+p_2+\frac{c}{N}. \label{sbAA}
}
For the second moment in~\eq{sb1}, 
\ban{
[\E(X'-X)^2]^\frac12 &= [\E(X'-\E(X'|X,Y) + \E(X'|X,Y) - X)^2]^\frac12\notag\\
	&=[ \E(X' - \E(X'|X,Y))^2]^\frac12 + [\E(\E(X'|X,Y) - X)^2]^\frac12,\label{sbA4}
}
and using~\eq{binom2} and~\eq{sbA2}
\ban{
\E(X'-\E(X'|X,Y))^2 &=\frac{1}{N^2} \E\Big[  \E\big(A - \E(A|X,Y) | X,Y\big)^2 + \E\big(C - \E(C|X,Y) | X,Y\big)^2 \Big]\notag\\
	&\leq\frac{1}{N^2}\left(N-c+\frac c4\right) = \frac{1}{N}.\label{sbA5}
}
Then from~\eq{sbA5} and~\eq{sbA2a},
\ban{
[\E(X'-X)^2]^\frac12 \leq \frac{1}{\sqrt N} + p_1+p_2+\frac cN.\label{sbAB}
}
Combining~\eq{sbAA} and~\eq{sbAB} in~\eq{sbAAA} yields~\eq{sb3moma}. 	
\ba{
\E |(X'-X)^3| \leq  \Bigg[\frac1N\bigg(4N^2 +2cN + \eps_{M,c} \bigg)^\frac14 + p_1+p_2+\frac{c}{N}\Bigg]^2\Bigg[ \frac{1}{\sqrt N} + p_1+p_2+\frac{c}{N} \Bigg].
}
For~\eq{sb3momb},
\ban{
\E \abs{(X'-X)^2 (Y'-Y)} \leq \big[ \E(X'-X)^4\big]^\frac{1}{2} \big[\E(Y'-Y)^2\big]^\frac12.\label{sbBB}
}
For the second half of the above,
\ban{
[E(Y'-Y)^2]^\frac12 &= [ \E(Y'-\E(Y'|X,Y) + \E(Y'|X,Y) - Y)^2]^\frac12\notag\\
	&= [\E(Y' - \E(Y'|X,Y))^2]^\frac12 + [\E(\E(Y'|X,Y) - Y)^2]^\frac12.\label{sbB1}
}
Furthermore noting that since $D|(X,Y) = (MY - C) | (X,Y)$, implies that $\E(D - \E(D|X,Y))^2 = \E(C - \E(C|X,Y))^2$, then using~\eq{binom2} and~\eq{sbA2},
\ba{
\E( Y'-\E(Y'|X,Y))^2 &= \frac{1}{M^2} \E\big[ (B-\E(B|X,Y))^2 + (D - \E(D|X,Y))^2\big]\\
	&\leq \frac{1}{M^2} \left(c + \frac{c}{4}\right) = \frac{5c}{4M^2}.
}
And finally from~\eq{eq:YY2}, $\abs{Y - \E(Y'|X,Y)} \leq \frac{c}{M}(1 + p_1 + p_2)$. Hence,
\ban{
[E(Y'-Y)^2]^\frac12 \leq \sqrt{ \frac{5c}{4M^2}} + \frac{c}{M}(1 + p_1 + p_2)\label{sbB2}.
}
The bound~\eq{sb3momb} now follows from using~\eq{sbAA} and~\eq{sbB2} in~\eq{sbBB}.

For the third bound~\eq{sb3momc} we begin with
\ban{
\E \abs{(Y'-Y)^2(X'-X)} \leq [ \E(Y'-Y)^4]^\frac12 [\E(X'-X)^2]^\frac12,\label{sbCC}
}
and
\ba{
[\E(Y'-Y)^4]^\frac14 &= [\E( Y' - \E(Y'| X,Y) + \E(Y'|X,Y) - Y)^4]^\frac14\\
	&\leq [\E(Y'-\E(Y'|X,Y))^4]^\frac14 + [ \E( \E(Y'|X,Y) - Y)^4 ] ^\frac14.
}
Using the fact that $\E( (C - E(C|X,Y))^4 | X,Y) = \E( (D - E(D|X,Y))^4 | X,Y)$ as $C | (X,Y) = (MY - D)|(X,Y)$ and similarly for the second moment,~\eq{sbA1},~\eq{binom4},~\eq{binom2} and~\eq{sbA2},
\ba{
\E(Y' -\E(Y'|X,Y))^4 &= \frac{1}{M^4} \E[ (B - E(B|X,Y))^4 + (D - E(D|X,Y))^4 \\
	&\hspace{1cm}+ 6 (B - E(B|X,Y))^2(D - E(D|X,Y))^2]\\
	&\leq \frac{1}{M^4} \left(4c^2 + \eps_{M,c} + \frac32c^2\right).
}
Hence,
\ban{
[\E(Y'-Y)^4]^\frac14 \leq  \frac{1}{M} [6c^2 + \eps_{M,c}]^\frac14 + \frac{c}{M}(1 + p_1 + p_2).\label{sbC1}
}
Using~\eq{sbC1} and~\eq{sbAB} in~\eq{sbCC} yields the bound.

The final bound~\eq{sb3momd} follows from ~\eq{sbC1} and~\eq{sbB2}.
\end{proof}
\begin{proof}[Proof of Theorem~\ref{thm:SB}]
Given~\eq{eq:XX2} and~\eq{eq:YY2} we apply Theorem~\ref{thm:ex}, setting $\l = \frac{1}{2N}$, $a_1 = 2p_1(N-c)$, $a_2 = 2p_2(N-c)$, $b_1 = b_2=0$, $c_1 = 2c$, $c_2 = \frac{2cN}{M}$, $\alpha = 2$, $\beta = 0$, $R_1 =0$ and $R_2 = \frac{c}{M}(p_1 - (p_1 + p_2)X)$. The proof now follows from Lemmas~\ref{lem:SB2} and~\ref{lem:SB3} and routine bounds for $R_1$ and $R_2$.
\end{proof}
\subsection{Proof of Theorem~\ref{thm:Beta}}
Our goal is to use Theorem~3 from~\cite{GRR17} to bound the difference between $\frac{N}{N+M}X + \frac{M}{N+M}Y$ and a beta random variable. The following is the linearity condition required to use~\cite[Theorem~3]{GRR17}. Recall~\eq{eq:XX} and~\eq{eq:YY},
\ban{
\frac{1}{N+M}\E\big[ &(NX' + MY') - (NX + MY) | NX + MY \big]\notag \\
	&= \frac{1}{N+M} \E \big[ Np_1 + Mq_2 - (N(p_1 + p_2)X + M(q_1 + q_2)Y) \big| NX + MY \big].\notag\\
	&= \frac{1}{2(N + M)}\Bigg[ 2(Np_1 + Mq_1) -2\big( N(p_1 + p_2) + M(q_1 + q_2)\big) \Big( \frac{N}{N+M} X + \frac{M}{N+M} Y \Big) \Bigg]\notag\\
	&\ \ \ + \frac{NM}{(N+M)^2} \big( (q_1 + q_2) - (p_1 + p_2) \big) \E \big[ X - Y | NX + MY \big].\label{eq:lin}
}
As in the previous section, we bound the necessary terms in a sequence of lemmas and we set $\lambda = \frac{1}{2(N+M)}$.
\begin{lemma}\label{lem:d2mom}
\ba{
\E \Bigg| \Big[ &\Big(\frac{N}{N+M} X + \frac{M}{N+M}Y \Big)\Big( 1- \Big( \frac{N}{N+M} X + \frac{M}{N+M}Y\Big) \Big)\Big] \\
	&- \frac{1}{2\lambda} \E \Big[ \Big(\Big( \frac{N}{N+M} X' + \frac{M}{N+M}Y' \Big) - \Big( \frac{N}{N+M} X + \frac{M}{N+M}Y \Big)\Big)^2 \Big| NX + MY \Big] \Bigg|\\
	&\leq \frac{1}{N+M} \Big[\big( N(2p_1 + p_2) + M(2q_1 + q_2) \big)^2 + 3N(2p_1 + p_2) + 3M(2q_1 + q_2)\Big]\\
	&\ \ \ + \frac{NM}{(N+M)^2} \E(X-Y)^2
}
\end{lemma}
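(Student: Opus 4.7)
The strategy is to exploit the fact that $A+B$ and $C+D$ are conditionally independent binomials. Since $A\mid(X,Y) \sim \Bin(N-c,\pi_X)$ and $B\mid(X,Y)\sim \Bin(c,\pi_X)$ have the \emph{same} success probability $\pi_X := p_1+(1-p_1-p_2)X$ and are conditionally independent, we have $A+B\mid(X,Y) \sim \Bin(N,\pi_X)$; analogously $C+D\mid(X,Y)\sim\Bin(M,\pi_Y)$ with $\pi_Y:= q_1+(1-q_1-q_2)Y$, and the two sums are conditionally independent. Writing $W := \frac{N}{N+M}X+\frac{M}{N+M}Y$, this gives $(N+M)(W'-W)=(A+B)+(C+D)-(NX+MY)$, with conditional mean $\mu := Np_1 + Mq_1 - N(p_1+p_2)X - M(q_1+q_2)Y$ and conditional variance $\sigma^2 := N\pi_X(1-\pi_X) + M\pi_Y(1-\pi_Y)$.

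First I would reduce the conditioning. Because $W(1-W)$ is $\sigma(NX+MY)$-measurable and $\sigma(NX+MY)\subseteq\sigma(X,Y)$, the tower property followed by conditional Jensen's inequality yields
$$\E\left|W(1-W)-(N+M)\E[(W'-W)^2\mid NX+MY]\right| \leq \E\left|W(1-W)-(N+M)\E[(W'-W)^2\mid X,Y]\right|,$$
so it suffices to bound the right-hand side. Conditional independence of $A+B$ and $C+D$ gives $(N+M)\E[(W'-W)^2\mid X,Y] = (\mu^2+\sigma^2)/(N+M)$.

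The algebraic core is the identity
$$W(1-W) - \frac{NX(1-X)+MY(1-Y)}{N+M} = \frac{NM(X-Y)^2}{(N+M)^2},$$
obtained by direct expansion and collection of cross terms. Combining this identity with the expression for $(N+M)\E[(W'-W)^2\mid X,Y]$, I would rewrite
$$W(1-W) - \frac{\mu^2+\sigma^2}{N+M} = \frac{NM(X-Y)^2}{(N+M)^2} - \frac{\mu^2}{N+M} + \frac{N[X(1-X)-\pi_X(1-\pi_X)]+M[Y(1-Y)-\pi_Y(1-\pi_Y)]}{N+M},$$
so that taking expectations and applying the triangle inequality controls the three contributions separately: the first produces the $\frac{NM}{(N+M)^2}\E(X-Y)^2$ term exactly.

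The remaining pointwise bounds are elementary. The factorisation $X(1-X)-\pi_X(1-\pi_X)=(X-\pi_X)(1-X-\pi_X)$ combined with $|X-\pi_X|=|p_1-(p_1+p_2)X|\leq 2p_1+p_2$ and the deliberately crude $|1-X-\pi_X|\leq 1+X+\pi_X\leq 3$ yields the $3N(2p_1+p_2)$ term (and symmetrically $3M(2q_1+q_2)$). The squared term arises from $|\mu|\leq N(2p_1+p_2)+M(2q_1+q_2)$, using again that $|p_1-(p_1+p_2)X|\leq 2p_1+p_2$ on $X\in[0,1]$ (and the analogue for $Y$). The only real subtlety is choosing the decomposition so that the leading $NM(X-Y)^2/(N+M)^2$ contribution, which does \emph{not} vanish under weak mutation and is the genuine obstruction to beta approximation under weak migration, is cleanly separated from the second-order mutation errors; once this is in place the rest is bookkeeping.
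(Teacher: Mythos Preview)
Your proof is correct and arrives at exactly the stated bound. Both you and the paper begin from the same structural observation, namely that $NX'+MY'\mid(X,Y)$ is distributed as a sum of two conditionally independent binomials $\Bin(N,\pi_X)$ and $\Bin(M,\pi_Y)$ (the paper calls these $F$ and $G$), and both push the conditioning on $NX+MY$ through to $(X,Y)$ via the tower property.

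The execution of the algebra, however, is genuinely different. The paper expands $\E[(F+G-NX-MY)^2\mid X,Y]$ via $F(F-1)+F+2FG+G(G-1)+G$, then collects a long polynomial in $X,Y,X^2,Y^2,XY$ term by term, identifies the piece $W(1-W)$ and the residual $-\frac{NM}{(N+M)}(X-Y)^2$ only at the end, and closes with ``elementary bounding''. Your route is more structured: you use the mean--variance split $\E[(W'-W)^2\mid X,Y]=(\mu^2+\sigma^2)/(N+M)^2$ together with the identity
\[
W(1-W)-\frac{NX(1-X)+MY(1-Y)}{N+M}=\frac{NM(X-Y)^2}{(N+M)^2},
\]
which isolates the $(X-Y)^2$ contribution immediately and reduces the rest to the two transparent estimates $|X-\pi_X|\le 2p_1+p_2$ and $|1-X-\pi_X|\le 3$. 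This buys you a shorter and more conceptual argument that makes clear \emph{why} the bound has its particular shape; the paper's brute-force expansion obscures this but has the minor advantage of producing slightly sharper intermediate constants (e.g.\ $|1-X-\pi_X|\le 1$ rather than $3$) before the final crude bounding step. Either way the conclusion is the same.
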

\begin{proof}
Noting that $NX' + MY' | X,Y \ed (F + G)|X,Y$ where $F|X,Y \sim \Bin(N, p_1 + (1-p_1-p_2)X)$  and $G|X,Y \sim \Bin(M, q_1 + (1-q_1-q_2)Y)$ and $F$ and $G$ are conditionally independent,
\ba{ \frac{1}{2\lambda}& \E \Big[ \Big(\Big( \frac{N}{N+M} X' + \frac{N}{N+M}Y' \Big) - \Big( \frac{N}{N+M} X + \frac{N}{N+M}Y \Big)\Big)^2 \Big| NX + MY \Big]\\
&= \frac{1}{2\lambda (N+M)^2} \E \Bigg[\E \Big[ \big( (F + G) - ( NX + MY )\big)^2 \Big| X,Y \Big] \Bigg| NX + MY \Bigg]\\
	&=  \frac{1}{N+M} \E \Big[ \E\big[F(F-1) + F + 2FG + G(G-1) + G \\
	&\ \ \ +2(F+G)(NX + MY) + (NX + MY)^2  | X,Y\big] \Big| NX + MY \Big]\\
	&= \frac{1}{N+M} \E \Big[ N(N-1)(p_1 + (1-p_1-p_2)X)^2 + N(p_1 + (1-p_1-p_2)X) \\
	&\ \ \ + M(M-1)(q_1 + (1-q_1-q_2)Y)^2 + M(q_1 + (1-q_1 - q_2)Y) \\
	&\ \ \ + 2NM(p_1 + (1-p_1-p_2)X)(q_1 + (1-q_1-q_2)Y)\\
	&\ \ \ -2(N(p_1 + (1-p_1-p_2)X)+ M(q_1 + (1-q_1 - q_2)Y))(NX + MY)\\
	&\ \ \  + (NX + MY)^2 \Big| NX + MY\Big]\\
	&=\frac{1}{N+M} \E \Big[ X^2 \big[ N(N-1)(1-p_1-p_2)^2  - 2N^2(1-p_1-p_2) + N^2 \big]\\
	& \ \ \ + Y^2\big[ M(M-1)(1-q_1-q_2)^2 - 2M^2(1-q_1-q_2) + M^2 \big]\\
	&\ \ \ +XY\big[2NM(1-p_1-p_2)(1-q_1-q_2)-2NM(1-p_1-p_2)\\
	&\ \ \ \ \ \ - 2NM(1-q_1-q_2) + 2NM\big]\\
	& \ \ \ + X\big[ 2N(N-1)p_1(1-p_1-p_2) + N(1-p_1-p_2)\\
	&\ \ \ \ \ \  + 2NMq_1(1-p_1-p_2)- 2N(Np_1 + Mq_1)\big]\\
	&\ \ \ + Y\big[ 2M(M-1)q_1(1-q_1-q_2) + M(1-q_1-q_2) \\
	&\ \ \ \ \ \ + 2NMp_1(1-q_1-q_2) - 2M(Mq_1 + Np_1) \big]\\
	&\ \ \ + N(N-1)p_1^2 + Np_1 + M(M-1)q_1^2 + Mq_1 + 2N Mp_1q_1\Big| NX + MY \Big]\\
	&= \frac{1}{N+M}\E  \Bigg[ X^2 \Big[-N + \frac{NM}{N+M} + N^2(p_1 + p_2)^2  + 2N(p_1+p_2) - N(p_1+p_2)^2 \Big]\\
	&\ \ \ + Y^2 \Big[-M + \frac{NM}{N+M} +  M^2(q_1 + q_2)^2 + 2M(q_1 + q_2) - M(q_1 + q_2)^2\Big]\\
	&\ \ \ + XY \Big[-\frac{2NM}{N+M} + 2 NM(p_1 + p_2)(q_1 + q_2)\Big]\\
	&\ \ \ +  X \Big[ N - 2N^2p_1(p_1+p_2) - N(2p_1(1-p_1-p_2) + (p_1+p_2)) - 2NMq_1(p_1+p_2) \Big]\\
	&\ \ \ + Y\Big[ M - 2M^2q_1(q_1+q_2) - M(2q_1(1-q_1-q_2) + (q_1+q_2)) - 2NMp_1(q_1+q_2) \Big]\\
	&\ \ \ + (Np_1 + Mq_1)^2 +Np_1(1-p_1) + Mq_1(1-q_1)\\
	&\ \ \ - \frac{NM}{(N+M)} (X-Y)^2 \Bigg| NX + MY \Bigg]\\
	&= \Big(\frac{N}{N+M} X + \frac{M}{N+M}Y \Big)\Big( 1- \Big( \frac{N}{N+M} X + \frac{M}{N+M}Y\Big) \Big)\\
	&\ \ \ + \frac{1}{N+M}\E  \Bigg[ X^2 \Big[N^2(p_1 + p_2)^2  + 2N(p_1+p_2) - N(p_1+p_2)^2 \Big]\\
	&\ \ \ + Y^2 \Big[ M^2(q_1 + q_2)^2 + 2M(q_1 + q_2) - M(q_1 + q_2)^2\Big]+ XY \Big[2 NM(p_1 + p_2)(q_1 + q_2)\Big]\\
	&\ \ \ +  X \Big[- 2N^2p_1(p_1+p_2) - N(2p_1(1-p_1-p_2) + (p_1+p_2)) - 2NMq_1(p_1+p_2) \Big]\\
	&\ \ \ + Y\Big[- 2M^2q_1(q_1+q_2) - M(2q_1(1-q_1-q_2) + (q_1+q_2)) - 2NMp_1(q_1+q_2) \Big]\\
	&\ \ \ + (Np_1 + Mq_1)^2 +Np_1(1-p_1) + Mq_1(1-q_1)- \frac{NM}{(N+M)} (X-Y)^2 \Bigg| NX + MY \Bigg].
}
The result now follows from elementary bounding.
\end{proof}
\begin{lemma}\label{lem:d3mom}
\ba{
\frac{1}{\lambda} &\E \Bigg| \Big[\Big(\frac{N}{N+M}X' + \frac{M}{N+M}Y'\Big) - \Big(\frac{N}{N+M}X + \frac{M}{N+M}Y\Big) \Big]^3 \Bigg|\\
	& \leq \frac{1}{(N+M)^2} \big[ (4N^2 + 6NM + 4M^2)^{\frac 14} + p_1 + p_2 + q_1 + q_2 \big]^2 \big[\sqrt{N+M} + p_1 + p_2 + q_1 + q_2\big].
}
\end{lemma}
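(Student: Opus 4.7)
The plan is to follow the same H\"older/Minkowski strategy used in Lemma~\ref{lem:3mom}, adapted to the projected quantity $W := (NX+MY)/(N+M)$. Writing $W' := (NX'+MY')/(N+M)$, the key structural fact is that, conditionally on $(X,Y)$, $NX' + MY' \ed F + G$ where $F \sim \Bin(N, p_1 + (1-p_1-p_2)X)$ and $G \sim \Bin(M, q_1 + (1-q_1-q_2)Y)$ are independent. Consequently
\[
W' - W = \frac{(F - NX) + (G - MY)}{N+M},
\]
so the standard moment techniques apply.

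I first apply H\"older's inequality $\E|W' - W|^3 \leq [\E(W'-W)^4]^{1/2}[\E(W'-W)^2]^{1/2}$ and treat each factor via the Minkowski decomposition $W' - W = [W' - \E(W'|X,Y)] + [\E(W'|X,Y) - W]$. For the noise part, conditional independence of $F$ and $G$ gives
\[
\E\bigl[(W' - \E(W'|X,Y))^4 \bigl| X,Y\bigr] = \frac{\E[(F - \E F)^4|X,Y] + \E[(G-\E G)^4|X,Y] + 6\E[(F-\E F)^2|X,Y]\E[(G-\E G)^2|X,Y]}{(N+M)^4},
\]
which, after applying the binomial moment bounds~\eqref{binom4} and~\eqref{binom2}, is at most $(4N^2 + 6NM + 4M^2)/(N+M)^4$; an analogous but simpler calculation gives $\E(W' - \E(W'|X,Y))^2 \leq 1/(N+M)$.

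For the drift, combining~\eqref{eq:XX} and~\eqref{eq:YY} reveals an exact cancellation of the migration contributions $\frac{c}{N}(Y - X)$ and $\frac{c}{M}(X - Y)$ in the linear combination $N\E[X' - X | X,Y] + M\E[Y' - Y | X,Y]$, leaving the clean identity
\[
\E(W' | X,Y) - W = \frac{N(p_1 - (p_1+p_2)X) + M(q_1 - (q_1+q_2)Y)}{N+M},
\]
which has absolute value at most $\frac{N(p_1+p_2) + M(q_1+q_2)}{N+M} \leq p_1 + p_2 + q_1 + q_2$ since $X, Y \in [0,1]$. Feeding these four estimates into Minkowski and then H\"older, and finally multiplying by $1/\lambda = 2(N+M)$ while redistributing the $(N+M)$-factors between the two brackets, produces the stated bound.

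The main (quite mild) obstacle is verifying the migration cancellation in the drift, which is precisely what makes the final bound independent of $c$ and consistent with the parametrisation of Theorem~\ref{thm:Beta}; the remainder is a careful parallel to Lemma~\ref{lem:3mom} together with the final algebraic packaging to match the exact form of the statement.
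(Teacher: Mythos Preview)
Your proof is correct and follows essentially the same route as the paper: H\"older to reduce to fourth and second moments, Minkowski to separate the centred binomial fluctuation $F+G-\E(F+G\mid X,Y)$ from the conditional drift, the binomial moment bounds~\eqref{binom4}--\eqref{binom2}, and the observation that the migration contributions cancel in $N\E[X'-X\mid X,Y]+M\E[Y'-Y\mid X,Y]$. The only cosmetic difference is that you work throughout with the normalised quantity $W=(NX+MY)/(N+M)$, whereas the paper carries the unnormalised $NX+MY$ and divides by $(N+M)^3$ at the end.
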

\begin{proof}
Using H\"older's inequality,
\ban{
\E\Big|&\big[ (NX' + MY') - (NX + MY)\big]^3\Big|\notag \\
	&\leq \Big[\E\big[ (NX' + MY') - (NX + MY)\big]^4\Big]^{\frac 12} \Big[\E\big[ (NX' + MY') - (NX + MY)\big]^2\Big]^{\frac 12}\label{eq:d3mom}
}
Then using Minkowski's inequality,
\ban{
\Big[ \E\big(& (NX' + MY') - (NX + MY)\big)^4\Big]^{\frac 14}\notag \\
	&\leq \Big[ \E\big( (NX' + MY') - \E(NX' + MY' | X,Y) \big)^4 \Big]^{\frac 14}\notag \\
	&\ \ \ + \Big[ \E\big( \E(NX' + MY' | X,Y) - (NX + MY) \big)^4 \Big]^{\frac 14}.\label{eq:d3moma}
}
Recalling the definitions of the random variables $F$ and $G$ from the previous lemma, and~\eq{binom4},~\eq{binom2},
\ban{
 \E\big( &(NX' + MY') - \E(NX' + MY' | X,Y) \big)^4\notag\\
	&= \E[ \E( (F+G) - \E(F + G))^4]\notag\\
	&= \E[ \E(F - \E(F))^4 + 6 \E(F - \E(F))^2 \E(G - \E(G))^2 + \E(G - \E(G))^4]\notag \\
	&\leq 4N^2 + 6NM + 4M^2.\label{eq:d3momb}
}
Furthermore,
\ban{
\big| E(NX' +& MY' | X,Y) - (NX + MY) \big| = | p_1 - (p_1 + p_2)X + q_1 - (q_1 + q_2)Y|\notag\\
	& \leq  p_1 + p_2 + q_1 + q_2.\label{eq:d3momc}
}
Using~\eq{binom2}, in the final inequality
\ban{
\Big[\E[ &(NX' + MY') - (NX + MY)]^2\Big]^{\frac 12}\notag\\
	&= \E\big[ \E( (NX' + MY') - \E(NX' + MY' | X,Y))^2 \big]^{\frac 12} \notag\\
	&\ \ \ + \E\big[ \E(NX' + MY' | X,Y) - (NX + MY))^2 \big]^{\frac 12}\notag\\
	&\leq \big[\E\big( \E[(F - \E(F))^2|X,Y] + \E[(G - \E(G))^2|X,Y]\big)\big]^{\frac 12}+ p_1 + p_2 + q_1 + q_2\notag\\
	&\leq (N + M)^{\frac 12} + p_1 + p_2 + q_1 + q_2.\label{eq:d3momd}
}
The result now follows from substituting~\eq{eq:d3moma}, \eq{eq:d3momb}, \eq{eq:d3momc} and \eq{eq:d3momd} into \eq{eq:d3mom}.
\end{proof}
\begin{proof}[Proof of Theorem~\ref{thm:Beta}]
We apply~\cite[Theorem 3]{GRR17}, set $a_1 = 2(Np_1 + M q_1), a_2 = 2(Np_2 + Mq_2)$, $\lambda = \frac{1}{2(N+M)}$, and the result follows from~\eq{eq:lin}, setting $R = \frac{NM}{(N+M)^2} \big( (q_1 + q_2) - (p_1 + p_2) \big) \E \big[ X - Y | NX + MY \big]$ and Lemmas~\ref{lem:d2mom} and~\ref{lem:d3mom}.

Recalling \eq{eq:XX} and \eq{eq:YY}, then by taking expectations of both equations, we can solve for $\E[X]$ and $\E[Y]$. Similarly for the second order moments, recall~\eq{eq:x2mom}, a symmetric equation for the second moment of $Y$ and the following,
\ba{
NM \E[X'Y' | X,Y] &= \E[ (A + C)(B+D)|X,Y]\\
	&= [ (N-c) (p_1 + (1-p_1-p_2)X) + c(q_1 + (1-q_1-q_2)Y) ]\\
	&\ \ \ \cdot [ (M-c)(q_1 + (1-q_1-q_2)Y) + c(p_1 + (1-p_1-p_2)X)].
}
Again taking expectations, we can solve the above system of equations for all the required moments, and the leading order term can be found in an elementary manner. We present only the leading order term as the complete algebraic expressions for the moments are far too long to be included in this paper. 
\end{proof}
\subsection{Proof of theorem~\ref{thm:SBB}}
As before our goal is to utilise Theorem~3 from~\cite{GRR17} to bound the difference between $\frac{N}{N+M}X + \frac{M}{N+M}Y$ and a beta random variable. To show the linearity condition required to use~\cite[Theorem~3]{GRR17}, we first note that $NX' + MY' | X,Y \ed A|(X,Y) + MY$ where $A|(X,Y) \sim \Bin(N, p_1 + (1-p_1-p_2)X)$. Then
\ba{
\E[ NX' + MY' | X,Y] = N(p_1 + (1-p_1 - p_2)X) + MY,
}
and hence using the tower property
\ban{
\frac{1}{N+M} &\E[ NX' + MY' - (NX + MY) | NX + MY]\notag\\
	&=\frac{1}{N+M} \E[ Np_1 - N(p_1 + p_2)X | NX + MY]\notag\\
	&= \frac{N}{2(N+M)^2} \Big[ 2(N+M)p_1 - 2(N+M)(p_1 + p_2) \Big(\frac{N}{N+M}X + \frac{M}{N+M}Y\Big) \Big]\notag\\
	& \ \ \ - \frac{NM}{(N+M)^2}(p_1 + p_2) \E[X-Y | NX + MY].\label{eq:lin2}
}
We set $\lambda = \frac{N}{2(N+M)^2}$ and bound the necessary terms in the following lemmas.
\begin{lemma}\label{lem:s2mom}
\ba{
\E \Bigg| \Big[ &\Big(\frac{N}{N+M} X + \frac{M}{N+M}Y \Big)\Big( 1- \Big( \frac{N}{N+M} X + \frac{M}{N+M}Y\Big) \Big)\Big] \\
	&- \frac{1}{2\lambda} \E \Big[ \Big(\Big( \frac{N}{N+M} X' + \frac{M}{N+M}Y' \Big) - \Big( \frac{N}{N+M} X + \frac{M}{N+M}Y \Big)\Big)^2 \Big| NX + MY \Big] \Bigg|\\
	&\leq N(p_1 + p_2)^2 + (p_1 + p_2) + \frac{M}{N+M}\sqrt{\E (X - Y)^2}.
}
\end{lemma}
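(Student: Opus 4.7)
The plan is to leverage the special structure of the seed-bank model, where the hypergeometric construction forces $C + D = MY$ deterministically given $(X,Y)$. This yields the key simplification
\[
NX' + MY' \mid (X,Y) \stackrel{d}{=} F + MY, \qquad F \mid (X,Y) \sim \Bin\bigl(N,\, p_1 + (1-p_1-p_2)X\bigr),
\]
so the entire analysis of the weighted average reduces to moments of a single conditional binomial. Setting $W := \frac{N}{N+M}X + \frac{M}{N+M}Y$, one has $W' - W = (F - NX)/(N+M)$, and since $\lambda = N/(2(N+M)^2)$ the scaled conditional second moment is
\[
\frac{1}{2\lambda}\,\E\bigl[(W'-W)^2 \mid X,Y\bigr] = \frac{1}{N}\,\E\bigl[(F - NX)^2 \mid X,Y\bigr] = P(1-P) + N\delta^2,
\]
where $P := p_1 + (1-p_1-p_2)X$ and $\delta := p_1 - (p_1+p_2)X = P - X$; this is obtained by adding the conditional variance of $F$ and the squared mean-gap $N^2\delta^2$, then dividing by $N$.

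Next I would compare this expression to the intermediate target $X(1-X)$. Using $P = X + \delta$, a short algebraic expansion gives the exact identity
\[
X(1-X) - \bigl[P(1-P) + N\delta^2\bigr] = -\delta(1-2X) - (N-1)\delta^2,
\]
and then the crude bounds $|\delta|\le p_1+p_2$ and $|1-2X|\le 1$ deliver the pointwise estimate $N(p_1+p_2)^2 + (p_1+p_2)$, accounting for the first two terms of the claim. To pass from $X(1-X)$ to $Z(1-Z) := W(1-W)$, use the identity $X - Z = \frac{M}{N+M}(X-Y)$ and apply the mean-value theorem to $t\mapsto t(1-t)$ (whose derivative has absolute value at most $1$ on $[0,1]$) to conclude $|X(1-X) - Z(1-Z)| \le \frac{M}{N+M}|X-Y|$; Cauchy--Schwarz then gives $\E|X(1-X) - Z(1-Z)| \le \frac{M}{N+M}\sqrt{\E(X-Y)^2}$, which is the third term.

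Finally, the outer conditioning on $NX+MY$ is dispatched by the tower property: since $Z$ is measurable with respect to $NX+MY$, one may write $Z(1-Z) = \E[Z(1-Z)\mid NX+MY]$ and then push the absolute value inside the expectation using Jensen, so that combining the two pointwise estimates via the triangle inequality yields the stated bound. There is no serious obstacle here; the only mild subtlety is keeping track of signs in the expansion relating $P(1-P) + N\delta^2$ to $X(1-X)$, after which the argument is shorter than in the two-island case precisely because the identity $C+D = MY$ eliminates all the cross-binomial and hypergeometric covariance bookkeeping that plagued Lemma~\ref{lem:d2mom}.
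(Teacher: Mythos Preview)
Your proof is correct. The paper arrives at the same bound by brute-force polynomial expansion of $\E[(A-NX)^2\mid X,Y]$ in powers of $X$, then explicitly subtracting $W(1-W)$ and factoring the remainder into two pieces: one that collapses algebraically to $\frac{M(X-Y)(N(2X-1)+M(X+Y-1))}{(N+M)^2}$ and is bounded by $\frac{M}{N+M}|X-Y|$, and one that is regrouped into $N^2(X(p_1+p_2)-p_1)^2$ plus lower-order terms. Your route---writing $\frac{1}{N}\E[(F-NX)^2\mid X,Y]=P(1-P)+N\delta^2$ via the variance-plus-squared-bias identity, inserting the intermediate comparison point $X(1-X)$, and handling the step from $X(1-X)$ to $W(1-W)$ by the mean-value theorem---is a cleaner reorganisation of the same computation. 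It avoids the long polynomial bookkeeping and makes the two sources of error (mutation shift $\delta$ versus island discrepancy $X-Y$) transparent, at the cost of introducing an extra triangle-inequality split; the paper's direct factorisation is slightly sharper in principle but lands on the same final constants.
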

\begin{proof}
\ba{
\E\big[ &\big((NX' + MY') - (NX + MY)\big)^2 | X,Y \big]  = \E \big[ (A - NX)^2 | X,Y \big]\\
	&= \E \big[ A(A-1) + A - 2ANX + N^2X^2 | X,Y\big]\\
	&= N(N-1)(p_1 + (1-p_1-p_2)X)^2 + (1-2NX)N(p_1 + (1-p_1+p_2)X) + N^2 X^2\\
	&= X^2 \big[ N(N-1) (1-p_1-p_2)^2 - 2N^2(1-p_1-p_2) + N^2 \big]\\
	&\ \ \ + X \big[ 2N(N-1)p_1(1-p_1-p_2) -2N^2 p_1 + N(1-p_1-p_2) \big] + \big[N(N-1)p_1^2 + Np_1\big]\\
	&= X^2\big[ -N + N^2(p_1 + p_2)^2 + N( 2(p_1 + p_2) - (p_1 + p_2)^2) \big]\\
	&\ \ \ + X \big[ N -2N^2 p_1(p_1 + p_2) + N(-2p_1(1-p_1-p_2) - (p_1 + p_2))\big]\\
	&\ \ \ + \big[ N(N-1)p_1^2 + Np_1\big].
}
Therefore
\ba{
&\frac{1}{2\lambda(N+M)^2} \E\big[ \big((NX' + MY') - (NX + MY)\big)^2 | NX + MY]\\
	&= \Big(\frac{N}{N+M} X + \frac{M}{N+M}Y \Big)\Big( 1- \Big( \frac{N}{N+M} X + \frac{M}{N+M}Y\Big) \Big)\\
	&\ \ \ +  \E \Bigg[ - \frac{(2NM + M^2)}{(N+M)^2}X^2 + \frac{M^2}{(N+M)^2} Y^2 + \frac{2NM}{(N+M)^2}XY + \frac{M}{N+M}(X-Y)\Bigg| NX + MY \Bigg]\\
	&\ \ \ +\frac{1}{N}\E \Big[X^2\big[ N^2(p_1 + p_2)^2 + N( 2(p_1 + p_2) - (p_1 + p_2)^2) \big]\\
	&\ \ \ \ \ \  + X \big[ -2N^2 p_1(p_1 + p_2) + N(-2p_1(1-p_1-p_2) - (p_1 + p_2))\big]\\
	&\ \ \ \ \ \  + \big[ N(N-1)p_1^2 + Np_1\big] \Big| NX + MY \Big] .
}
To complete the bound we make the following observations.
\ba{
\Bigg| - \frac{(2NM + M^2)}{(N+M)^2}X^2 &+ \frac{M^2}{(N+M)^2} Y^2 + \frac{2NM}{(N+M)^2}XY + \frac{M}{N+M}(X-Y)\Bigg|\\
	&= \Bigg| \frac{M(X-Y)( N(2X-1) + M(X + Y-1))}{(N+M)^2}\Bigg|\\
	&\leq \frac{M(N+M)}{(N+M)^2} |X - Y| = \frac{M}{N+M} | X-Y|,
}
and
\ba{
\Big|X^2&\big[ N^2(p_1 + p_2)^2 + N( 2(p_1 + p_2) - (p_1 + p_2)^2) \big]\\
	&\ \ \ + X \big[ -2N^2 p_1(p_1 + p_2) + N(-2p_1(1-p_1-p_2) - (p_1 + p_2))\big] +  \big[ N(N-1)p_1^2 + Np_1\Big|\\
	&=\Big| N^2 \big[ X^2(p_1 + p_2)^2 - 2X p_1 (p_1 + p_2) + p_1^2 \big]\\
	&\ \ \ + N \big[ (2(p_1 + p_2) - (p_1 + p_2)^2) X^2 -(2p_1(1-p_1-p_2) + (p_1 + p_2))X  - p_1^2 + p_1 \big]\Big|\\
	&=\Big| N^2( X(p_1 + p_2) - p_1)^2 - N( X(p_1 + p_2) - p_1)^2  + N[ p_1(2X^2 - 3X + 1) + p_2(2X^2 -X)]\Big|\\
	&\leq N^2(p_1 + p_2)^2 + N(p_1 + p_2).
}
\end{proof}
\begin{lemma}\label{lem:s3mom}
\ba{
\frac1\lambda\E \Bigg| \Big[& \Big(\frac{N}{N+M} X' + \frac{M}{N+M}Y' \Big) - \Big(\frac{N}{N+M} X + \frac{M}{N+M}Y \Big)\Big]^3 \Bigg|\\
	& \leq \frac{2}{N(N+M)}\Big[(2N)^\frac12 + N(p_1 + p_2)\Big]^2 \Big[N^\frac12 + N(p_1 + p_2)\Big].
}
\end{lemma}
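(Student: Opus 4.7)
The plan is to exploit the particularly simple structure of the seed-bank dynamics on the weighted average. Since $C + D = MY$ by construction, we have
\[
(NX' + MY') - (NX + MY) \;=\; (A + C) + (B + D) - (NX + MY) \;=\; (A + B) - NX.
\]
Because $A \mid (X,Y) \sim \Bin(N-c,\, p_1 + (1-p_1-p_2)X)$ and $B \mid (X,Y) \sim \Bin(c,\, p_1 + (1-p_1-p_2)X)$ are conditionally independent with the same success probability, $F := A+B \mid (X,Y) \sim \Bin(N,\, p_1 + (1-p_1-p_2)X)$. Writing $W := F - NX$, we have
\[
\Big(\tfrac{N}{N+M}X' + \tfrac{M}{N+M}Y'\Big) - \Big(\tfrac{N}{N+M}X + \tfrac{M}{N+M}Y\Big) \;=\; \tfrac{W}{N+M},
\]
and with $\lambda = \frac{N}{2(N+M)^2}$, the target quantity equals $\frac{2}{N(N+M)}\E|W|^3$.

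Next, I would bound $\E|W|^3$ by the same two-step argument used in Lemma~\ref{lem:3mom} and Lemma~\ref{lem:SB3}. H\"older's inequality gives
\[
\E|W|^3 \;\leq\; \bigl(\E W^4\bigr)^{1/2}\bigl(\E W^2\bigr)^{1/2},
\]
and Minkowski's inequality applied after the decomposition $W = \bigl(F - \E(F\mid X,Y)\bigr) + \bigl(\E(F\mid X,Y) - NX\bigr)$ separates the purely stochastic part from the drift. For the stochastic part, the binomial moment bounds~\eqref{binom4} and~\eqref{binom2} yield $\E\bigl[(F - \E(F\mid X,Y))^4 \mid X,Y\bigr] \leq 4N^2$ and $\E\bigl[(F - \E(F\mid X,Y))^2 \mid X,Y\bigr] \leq N$. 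For the drift,
\[
\bigl|\E(F\mid X,Y) - NX\bigr| \;=\; \bigl|N\bigl(p_1 - (p_1+p_2)X\bigr)\bigr| \;\leq\; N(p_1+p_2).
\]

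Combining these, Minkowski's inequality gives $\bigl(\E W^4\bigr)^{1/4} \leq (2N)^{1/2} + N(p_1+p_2)$ and $\bigl(\E W^2\bigr)^{1/2} \leq N^{1/2} + N(p_1+p_2)$. Substituting into H\"older and multiplying by the prefactor $\frac{2}{N(N+M)}$ yields exactly the stated bound. There is no serious obstacle here; the only thing to be careful of is the algebraic reduction $C+D = MY$, which collapses a potentially messy four-term centered sum into a single binomial deviation and is what makes the bound markedly simpler than its two-island counterpart in Lemma~\ref{lem:3mom}.
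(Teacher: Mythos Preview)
Your proof is correct and follows essentially the same approach as the paper: both reduce the increment to a single binomial deviation $F - NX$ with $F\mid(X,Y)\sim\Bin(N,\,p_1+(1-p_1-p_2)X)$, then apply H\"older's inequality followed by Minkowski's inequality together with the binomial moment bounds \eqref{binom4} and \eqref{binom2}. Your derivation of the reduction via $C+D=MY$ and $A+B\sim\Bin(N,\cdot)$ is in fact more transparent than the paper's, which simply asserts the distributional identity $NX'+MY'\mid(X,Y)\stackrel{d}{=}A+MY$ with $A$ redefined as a $\Bin(N,\cdot)$ variable.
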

\begin{proof}
Using H\"older's inequality,
\ban{
\E \big| [(NX' &+ MY') - (NX + MY) ]^3 \big| \notag\\
	&\leq \big(\E[(NX' + MY') - (NX + MY)]^4 \big)^\frac12 \big(\E[(NX' + MY') - (NX + MY)]^2 \big)^\frac12. \label{eq:sbd1}
}
Recall $NX' + MY' | (X,Y) \ed A + MY$ where $A|(X,Y) \sim \Bin(N, p_1 + (1-p_1-p_2)X)$. Then
\ban{
\E\big(& (NX' + MY') - (NX + MY)\big)^4\notag \\
	&=\E\big[ \E[ ((NX' + MY') - (NX + MY))^4|X,Y]\big]\notag \\
	&= \E \big[ \E[ (A-NX)^4 | X,Y] \big]\notag \\
	&\leq \E\Big[\big(\E( (A - \E(A|X,Y))^4 | X,Y)\big)^{\frac14} + [ \E( \E(A|(X,Y) - NX)^4)]^{\frac14} \Big]^{4}\notag\\
	&\leq \big[ (4N^2)^{\frac14} + N(p_1 + p_2)\big]^4.\label{eq:sbd2}
}
where we used Minkowski's inequality in the first inequality and~\eq{binom4} and $|\E(A | XY) - NX| \leq N(p_1 + p_2)$ in the final inequality. Similarly again using Minkowski's inequality and~\eq{binom2},
\ban{
\E \big( &(NX' + MY') - (NX + MY)\big)^2\notag\\
	&= \E\big[\E[(A - NX)^2 | X,Y]\big]\notag\\
	&\leq \E\Big[\big[ \E[(A - \E(A|X,Y))^2|X,Y ]^\frac12 + \E[ \E ( (A|X,Y) - NX)^2]^\frac12 \big]^2\Big]\notag\\
	&\leq \big[ N^\frac12 + N(p_1 + p_2)\big]^2.\label{eq:sbd3}
}
The final bound now follows from using~\eq{eq:sbd2} and~\eq{eq:sbd3} in~\eq{eq:sbd1}. 
\end{proof}
\begin{proof}[Proof of Theorem~\ref{thm:SBB}]
We apply~\cite[Theorem 3]{GRR17}, set $a_1 = 2(N+M)p_1, a_2 = 2(N+M)p_2$, $\lambda = \frac{N}{2(N+M)^2}$, and the result follows from~\eq{eq:lin2}, setting $R = -\frac{NM}{(N+M)^2} (p_1 + p_2) \E \big[ X - Y | NX + MY \big]$ and Lemmas~\ref{lem:s2mom} and~\ref{lem:s3mom}.

For the final moment bound, similar to the proof of Theorem~\ref{thm:Beta}, recalling~\eq{eq:XX2},~\eq{eq:YY2}, \eq{sbmomx2}, \eq{sbmomy2}, adapting~\eq{sbmomxy} and taking expectations, it is elementary to solve for $\E(X-Y)^2$. As for the two-island model, the complete algebraic expressions are extremely long and complicated, so we present only the leading order term.
\end{proof}

\subsection{Proof of Lemma~\ref{lem:TIbeta}}
\begin{proof}[Proof of Lemma~\ref{lem:TIbeta}]
The following equations can be found by taking expectations of~\eq{eq:gen} with respect to $f(x,y) = x, f(x,y) = y, f(x,y) = x^2, f(x,y) = y^2, f(x,y) = xy$, or~\cite[Lemma~2.7]{Blathetal2019}.
\ba{
0&=\E \left[a_1 - (a_1 + a_2 )X + c(Y - X) \right] ,\\
0&=\E \left[b_1 - (b_1 + b_2)Y + \gamma c(X - Y)\right],\\
0&=\E \left[2X (a_1 - (a_1 + a_2)X + c(Y-X)) + \alpha X(1-X) \right],\\
0&=\E \left[2Y(b_1 - (b_1 + b_2)Y + \gamma c(X-Y)) + \beta Y(1-Y) \right],\\
0&=\E \left[ Y(a_1 - (a_1 + a_2)X + c(Y-X)) + X(b_1 - (b_1 + b_2)Y + \gamma c(X-Y)) \right].
}
This system of equations can be solved by elementary means and as $c \to \infty$,
\ba{
\lim_{c \to \infty}\E(X) = \lim_{c \to \infty}\E(Y) &= \frac{\gamma a_1 + b_1}{\gamma(a_1 + a_2 )+ b_1 + b_2},
}
and
\ba{
\lim_{c \to \infty}\Var(X) &= \lim_{c \to \infty}\Var(Y) = \lim_{c \to \infty}\Cov(X,Y) \\
&= \frac{ (\gamma a_1 + b_1) [2(1+\gamma)(\gamma a_1 + b_1 + \gamma^2 \alpha + \beta)]}{(\gamma(a_1 + a_2) + b_1 + b_2) [ 2(1 + \gamma) ( \gamma(a_1 + a_2) + (b_1 + b_2)) + \gamma^2\alpha + \beta]}.
}
Hence as $\lim_{c \to \infty}\E(X - Y) = \lim_{c \to \infty}\Var(X - Y) = 0$, $X - Y \stackrel{p}{\to} 0$ as $c \to \infty$.
\end{proof}
\bibliographystyle{apalike}
\bibliography{references}

\end{document}